\documentclass[11pt]{amsart}
\usepackage{geometry}                % See geometry.pdf to learn the layout options. There are lots.
\geometry{letterpaper}                   % ... or a4paper or a5paper or ... 
\usepackage{graphicx}
\usepackage{amssymb}
\usepackage{amscd}
\usepackage{epstopdf}
\DeclareGraphicsRule{.tif}{png}{.png}{`convert #1 `dirname #1`/`basename #1 .tif`.png}

\title{Filtered $F$-Crystals on Shimura Varieties of Abelian Type}
\author{Tom Lovering}
%\date{}                                           % Activate to display a given date or no date
\newtheorem{thm}[subsubsection]{Theorem}
\newtheorem*{thm*}{Theorem}
\newtheorem{lem}[subsubsection]{Lemma}
\newtheorem{prop}[subsubsection]{Proposition}
\newtheorem{cor}[subsubsection]{Corollary}

\newcommand{\A}{\mathbb{A}}
\newcommand{\Af}{\mathbb{A}^\infty}
\newcommand{\Afp}{\mathbb{A}^{\infty,p}}
\newcommand{\C}{\mathbb{C}}

\newcommand{\D}{\mathbb{D}}
\newcommand{\Fp}{\mathbb{F}_p}
\newcommand{\G}{\mathbb{G}}
\newcommand{\bG}{\mathbb{G}}

\newcommand{\Gm}{\mathbb{G}_m}
\newcommand{\bL}{\mathbb{L}}
\newcommand{\bM}{\mathbb{M}}

\newcommand{\bP}{\mathbb{P}}
\newcommand{\Q}{\mathbb{Q}}

\newcommand{\Qp}{\mathbb{Q}_p}

\newcommand{\R}{\mathbb{R}}
\newcommand{\bS}{\mathbb{S}}

\newcommand{\Z}{\mathbb{Z}}
\newcommand{\Zp}{{\mathbb{Z}_p}}
\newcommand{\Zbp}{{\mathbb{Z}_{(p)}}}

\newcommand{\cA}{\mathcal{A}}
\newcommand{\cB}{\mathcal{B}}
\newcommand{\cC}{\mathcal{C}}

\newcommand{\cE}{\mathcal{E}}
\newcommand{\cF}{\mathcal{F}}
\newcommand{\cG}{\mathcal{G}}
\newcommand{\cH}{\mathcal{H}}

\newcommand{\cL}{\mathcal{L}}
\newcommand{\cM}{\mathcal{M}}

\newcommand{\cO}{\mathcal{O}}
\newcommand{\cP}{\mathcal{P}}

\newcommand{\cR}{\mathcal{R}}
\newcommand{\cS}{\mathcal{S}}
\newcommand{\cT}{\mathcal{T}}

\newcommand{\cV}{\mathcal{V}}

\newcommand{\fM}{\mathfrak{M}}

\newcommand{\fS}{\mathfrak{S}}
\newcommand{\fX}{\mathfrak{X}}
\newcommand{\fY}{\mathfrak{Y}}

\newcommand{\Aut}{\operatorname{Aut}}
\newcommand{\Spec}{\operatorname{Spec}}
\newcommand{\Spf}{\operatorname{Spf}}
\newcommand{\Spa}{\operatorname{Spa}}
\newcommand{\Hom}{\operatorname{Hom}}
\newcommand{\Ker}{\operatorname{Ker}}

\newcommand{\Fil}{\operatorname{Fil}}
\newcommand{\gr}{\operatorname{gr}}

\newcommand{\id}{\operatorname{id}}

\newcommand{\Isom}{\operatorname{Isom}}
\newcommand{\uHom}{\underline{\Hom}}
\newcommand{\uIsom}{\underline{\Isom}}
\newcommand{\Rep}{\operatorname{Rep}}
\newcommand{\vVec}{\operatorname{Vec}}
\newcommand{\Bun}{\operatorname{Vec}}
\newcommand{\Lisse}{\operatorname{Lisse}}
\newcommand{\Mod}{\operatorname{Mod}}
\newcommand{\Par}{\operatorname{Par}}

\newcommand{\Fib}{\operatorname{Fib}}
\newcommand{\Rees}{\operatorname{Rees}}

\newcommand{\mult}{\operatorname{mult}}

\newcommand{\FCrys}{\operatorname{FFCrys}}
\newcommand{\Res}{\operatorname{Res}}
\newcommand{\Gal}{\operatorname{Gal}}
\newcommand{\Sh}{\operatorname{Sh}}

\newcommand{\nocontentsline}[3]{}
\newcommand{\tocless}[2]{\bgroup\let\addcontentsline=\nocontentsline#1{#2}\egroup}

\newcommand{\rightiso}{\stackrel{\cong}{\rightarrow}}
\newcommand{\leftiso}{\stackrel{\cong}{\leftarrow}}
\newcommand{\map}[1]{\stackrel{#1}{\rightarrow}}
\newcommand{\lmap}[1]{\stackrel{#1}{\leftarrow}}

\usepackage{hyperref}
\hypersetup{
    linktoc=all,
    bookmarksnumbered
}

\begin{document}
\maketitle

\begin{abstract}
In this paper, we define and construct canonical filtered $F$-crystals with $G$-structure over the integral models for Shimura varieties of abelian type at hyperspecial level defined by Kisin \cite{kis2}. We check that these are related by $p$-adic comparison theorems to the usual lisse sheaves, and as an application we also use this to show that the Galois representations generated from the $p$-adic \'etale cohomology of Shimura varieties with nontrivial coefficient sheaves are crystalline, at least in the case of proper abelian type Shimura varieties.
\end{abstract}

\tableofcontents
\section{Introduction}
Shimura varieties are rich algebro-geometric objects straddling the gulf between the worlds of number theory on the one hand and representation theory on the other. As such, they play a central role in our current understanding of the Langlands programme, and this paper attempts to address some gaps in our current understanding of their geometry and p-adic Hodge theory.

It has long been understood that a useful way to think about Shimura varieties is as moduli spaces of ``motives with $G$-structure.'' In the classical PEL type case this heuristic has enjoyed success as a concrete moduli problem, and later Milne \cite{milne4} managed to write down a moduli problem for a much wider range of Shimura varieties. Unfortunately, outside this range it seems something more general than the usual notion of a rational Hodge structure (and hence a motive) is required, and Milne's description also fails to immediately yield a tidy story when extending to integral models.

Nevertheless, there are still things one can say. For example (putting aside mild technical constraints \ref{3.1.3}), given a point $x \in \Sh_K(G,\fX)(F)$ of a Shimura variety, one may use the tower $\Sh(G,\fX) \rightarrow \Sh_K(G,\fX)$, viewed as a pro-\'etale cover, to manufacture for all $p$ a fibre functor taking values in $p$-adic Galois representations
$$\omega_{et,x}: \Rep_{\Qp}(G_{\Qp}) \rightarrow \Rep_{\Qp}(\Gamma_F),$$
an object it is not misleading to think of as the $p$-adic \'etale cohomology of a `motive' attached to the point $x$. It is also possible to give archimedean (e.g. \cite{milne3}) and nonarchimedean \cite{lz} analytic constructions of fibre functors one might similarly view as the de Rham cohomology of the `motive' attached to $x$, at least in some settings. The problem of constructing integral models has also been addressed by Kisin, at least the smooth models in the hyperspecial abelian type case \cite{kis2}.

In this paper, building on the nonarchimedean de Rham construction of Liu-Zhu \cite{lz}, we construct a similar functor
$$\omega_{crys,x}: \Rep_{\Zp}(G_{\Zp}) \rightarrow \FCrys_{x/W(\kappa(x))}$$
which one should view as a stand-in for the \emph{integral crystalline cohomology} of the `motive' attached to an integral point $x$ of Kisin's integral models. We then give an obvious consequence for the Galois representations that turn up in the conjectural construction of the Langlands correspondence: namely that they are often crystalline when one expects them to be, and we also hope our construction will prove useful in studying the geometry of these integral models, such as in recent work of Hamacher \cite{ham}.

Let us now go into more detail and explain the main results of our paper. We begin in \S{\ref{S2}} with technical preliminaries. We carefully develop the theory of filtered $F$-crystals with $G$-structure on a smooth formal scheme $\fX/W(\kappa)$ where $W(\kappa)$ is the ring of Witt vectors over a finite field $\kappa$. While one may define such objects relatively painlessly as fibre functors taking values in filtered $F$-crystals
$$\omega: \Rep_{\Zp}(G) \rightarrow \FCrys_{\fX/W}$$
there are some technical subtleties worth reviewing, and it will be useful to have in hand a more geometric description of $\omega$ as corresponding to a $G$-bundle equipped with various extra structures. We also review the basics of the theory of $\fS$-modules and some $p$-adic comparison theorems together with results that relate them to the $G$-structure setting.

In \S{\ref{S3}} we get to the heart of the paper and give our results on Shimura varieties. Let $(G,\fX)$ be a Shimura datum with reflex field $E=E(G,\fX)$ and fix a place $v$ of $E$ dividing some rational prime $p$. Assume that $G$ is unramified at $p$ and fix $G/\Zbp$ a reductive model, let $K_p=G(\Zp)$, and assume that $(G,\fX)$ is of abelian type. Then \cite{kis2} gives a system of smooth integral models $\cS_{K_pK^p}/\cO_v$ for the Shimura varieties $\Sh_{K_pK^p}(G,\fX)$ as $K^p$ varies over sufficiently small open compact subgroups of $G(\Afp)$. 

Let $G^c$ be the quotient of $G$ described in (\ref{3.1.3}) obtained by killing the part of the centre that splits over $\R$ but not over $\Q$. Then it is well-known that the tower $\Sh_{K^p} \rightarrow \Sh_{K_pK^p}$ is a pro-(finite \'etale) $G^c(\Zp)$-torsor and so gives rise to a fibre functor
$$\omega_{et,K^p}: \Rep_{\Zp}(G^c) \rightarrow \Lisse_{\Zp}(\Sh_{K_pK^p}).$$
Combining the main results of Liu-Zhu \cite{lz} with the observation that the restriction of $\omega_{et,K^p}$ to special points may be checked explicitly to be de Rham, one may produce a de Rham fibre functor on the rigid generic fibre $\Sh_{K_pK^p}^{an}$ of $\cS_{K_pK^p}$ taking values in filtered vector bundles with connection
$$\omega_{dR,K^p}: \Rep_{\Qp}(G^c) \rightarrow \Fil^\nabla(\Sh_{K_pK^p}^{an}/E_v)$$
compatible with $\omega_{et,K^p}$ via $p$-adic Hodge theory.

We define (\ref{3.1.5}) a \emph{crystalline canonical model} of this functor to be a fibre functor taking values in strongly divisible filtered $F$-crystals
$$\omega_{crys, K^p}: \Rep_{\Zp}(G^c) \rightarrow \FCrys_{\cS_{K_pK^p}/\cO_v}$$
together with an identification $\iota: \omega_{crys,K^p}[1/p] \rightiso \omega_{dR,K^p}$ of fibre functors taking values in filtered vector bundles with connection satisfying what we call the CPLF conditions. Roughly speaking, this says that at unramified points $x$ of the integral model with $x^*\omega_{et}$ taking values in crystalline representations, there is a lattice condition asserting that the lattices given by  $x^*(\omega_{crys},\iota)$ must agree with some lattices constructed from $x^*\omega_{et}$ via the theory of $\fS$-modules, and a similar Frobenius condition relating the Frobenius attached to $x^*\omega_{crys}[1/p]$ with that coming from Fontaine's $D_{crys}$ functor. If we have all the features of such a model except the guarantee that the lattices produced are strongly divisible we call it a \emph{weak crystalline canonical model}. It is easy to check such objects if they exist are unique, and our main theorem is the following.

\begin{thm*}
With the setup above, in particular with $(G,\fX)$ of abelian type, if $p>2$ then a crystalline canonical model exists. If $p=2$\footnote{We should remark that the caveat at $p=2$ may likely be removed. The missing ingredient is a proof of (\ref{special strong}) in this case. Since strong divisibility seems like a less interesting condition at $p=2$ for our purposes, we did not search very hard for such a proof, which may very well exist and be a fun exercise for somebody interested in such things.}, a weak crystalline canonical model exists.
\end{thm*}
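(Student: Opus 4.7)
The proof naturally splits into two stages: the Hodge type case, handled directly via the universal abelian scheme, and the reduction of general abelian type data to Hodge type using Kisin's twisting construction in \cite{kis2}. For the Hodge type case, I would fix a symplectic embedding $(G,\fX) \hookrightarrow (GSp(V,\psi), S^\pm)$ with $G_{\Zbp}$ realised as the stabiliser of finitely many tensors $s_\alpha \in V_{\Zbp}^\otimes$. The integral model $\cS_{K_pK^p}$ is constructed as a normalisation in the Siegel integral model and therefore carries a universal abelian scheme $\cA$; the central input from \cite{kis2} is that the tensors $s_\alpha$ extend to Frobenius-invariant tensors $t_\alpha$ of Hodge degree zero in the Dieudonn\'e crystal $M := \mathbb{D}(\cA[p^\infty])$. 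Repackaging this geometrically, the isomorphism sheaf $\cP_{crys} := \uIsom((V_{\Zbp}, s_\alpha), (M, t_\alpha))$ is a crystalline $G$-torsor equipped with natural filtration and Frobenius, and the desired fibre functor is then obtained by pushing forward along $G \to G^c$ and taking the associated bundle $W \mapsto \cP_{crys} \times^{G^c} W$.

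The CPLF conditions reduce by tensor-functoriality to the standard representation $V$, where $\omega_{crys}$ produces the Dieudonn\'e module $M_x$ of $\cA_x[p^\infty]$ at an unramified point $x$ and $\omega_{et}$ produces its integral Tate module. The lattice condition is then the classical comparison between the Dieudonn\'e module of a $p$-divisible group and the $\fS$-module attached to its Tate module, reviewed in \S\ref{S2}; the Frobenius condition follows from the standard compatibility of $D_{crys}$ with covariant Dieudonn\'e theory on the generic fibre. For $p>2$, strong divisibility is then obtained via (\ref{special strong}): it suffices to check it at special points, where the underlying abelian variety has complex multiplication and the attached $\fS$-module is known to be strongly divisible.

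To pass from Hodge type to abelian type, choose a Hodge type datum $(G_1,\fX_1)$ sharing the same adjoint Shimura datum as $(G,\fX)$ and with $G_1^{der} = G^{der}$. Kisin realises $\cS_{K_pK^p}(G,\fX)$ as a quotient of a disjoint union of copies of $\cS_{K_{1,p}K_1^p}(G_1,\fX_1)$ by an action of a profinite group built from automorphisms of the adjoint data, and restricted to $\Rep_{\Zp}(G^{der}) = \Rep_{\Zp}(G_1^{der})$ the Hodge-type fibre functor constructed above is equivariant by naturality and hence descends. The main obstacle will be extending the descended functor from $\Rep_{\Zp}(G^{der})$ to all of $\Rep_{\Zp}(G^c)$: this amounts to prescribing its restriction to the representations of the central torus $G^c/G^{der}$ in a manner compatible with the derived part on the overlap. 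I would construct this torus part from the rank-one filtered $F$-crystals attached to characters via the reciprocity map at special points, and then invoke uniqueness of the crystalline canonical model (together with uniqueness of Kisin's integral models) to check that the various choices assemble consistently and preserve the CPLF conditions, as well as, for $p>2$, strong divisibility, throughout the abelian-type model.
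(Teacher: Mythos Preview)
Your Hodge type argument is essentially the paper's, modulo two corrections. First, in the Hodge type case one has $G=G^c$ automatically, so the pushforward along $G\to G^c$ is vacuous. Second, strong divisibility for $p>2$ is not obtained from (\ref{special strong}); that proposition is about the torus case. Rather, it is immediate because the crystalline cohomology of an abelian scheme has Hodge--Tate weights $\{0,1\}$, so Fontaine--Laffaille theory applies directly to $\hat{\cV}$, and strong divisibility of $\cP_K$ is inherited from that of $\hat{\cV}$.

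The abelian type reduction, however, has a genuine gap and diverges from the paper. You assert one may choose Hodge type $(G_1,\fX_1)$ with $G_1^{der}=G^{der}$, but in general one only obtains a central \emph{isogeny} $G_1^{der}\to G^{der}$; the categories $\Rep_{\Zp}(G_1^{der})$ and $\Rep_{\Zp}(G^{der})$ are not identified, so your descent-then-extend strategy does not get started. Even granting the isogeny, extending a fibre functor from $\Rep(G^{der})$ to $\Rep(G^c)$ by prescribing it on characters of $G^c/G^{der}$ ``via the reciprocity map at special points'' is where the real content lies, and you have not said how the two pieces are glued compatibly with filtration, connection, and Frobenius. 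The paper avoids this entirely: it first handles the special (torus) case directly (\S\ref{3.2}), then introduces an auxiliary ``initial'' datum $(\cB,\fX_{\cB})$ with $\cB=G_1\times_{G_1^{ab}}E^*$ sitting in a diagram $(G_1,\fX_1)\leftarrow(\cB,\fX_{\cB})\rightarrow(G,\fX)$. Over $\cS_{\cB(\Zp)}$ the model is built as a \emph{fibred product} of the pullbacks of the Hodge type and torus type models along $\cB\to G_1$ and $\cB\to E^*$; this is what replaces your extension step. The passage from $\cB$ to $G$ is then a genuine descent, carried out via Deligne--Kisin style group-theoretic formulae using the groups $\cA_p^{dR}(\cB)$ and $\cA_p^{dR}(G)$ acting on connected components, with strong divisibility checked by lifting local Frobenius lifts along the resulting finite \'etale maps of components.
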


The proof of this theorem is very similar to that of the main theorem of our previous paper \cite{tl}, except we now are working locally at a single prime so can use Kisin's models directly, and rather than using properties of Milne's de Rham construction we must use properties of that of Liu and Zhu. Apart from making the notation more generally tidy, this has two advantages. First, the assumption of \cite{tl} that $Z(G)^\circ$ be split by a CM field may now be removed, since we may use pure $p$-adic Hodge theory in place of the theory of CM motives, which underpin Milne's work and the constructions of \cite{tl}. Second, combining the present paper with \cite{tl} and noting that the constructions are the same gives a (perhaps somewhat roundabout) proof of Liu-Zhu's conjecture \cite[4.9 (ii)]{lz} that the $p$-adic analytification of Milne's construction agrees with theirs in the case of abelian type Shimura varieties (with $Z(G)^\circ$ split by a CM field of course).

Let us briefly recall the shape of it. First, one can use the theory of classical crystalline representations and $\fS$-modules directly to handle the ``special type'' case where $G$ is a torus. Next, one can use a Hodge embedding $G \hookrightarrow GSp(V)$ and carefully study the universal abelian scheme and a family of tensors on its sheaf of de Rham cohomology $s_{\alpha,dR} \in \hat{\cV}^\otimes$ to produce a geometric construction of a $G=G^c$-bundle
$$\cP_{K_pK^p} := \uIsom_{s_{\alpha}}(V,\hat{\cV})$$
which one checks has all the bells and whistles necessary to define a filtered $F$-crystal with $G$-structure. This relies on an understanding of Kisin's argument \cite[2.3]{kis2} and some of our technical preliminaries. Finally, one can pass from the Hodge $(G,\fX)$ to abelian $(G_2,\fX_2)$ type case by means of an auxiliary Shimura datum $(\cB,\fX_{\cB})$ attached to the group $\cB = G \times_{G^{ab}} E^*$ that sits in a diagram
$$(G,\fX) \leftarrow (\cB,\fX_{\cB}) \rightarrow (G_2,\fX_2).$$
The first stage is to pass from $G$ to $\cB$ by an explicit construction using our knowledge of the special and Hodge types. The second is to pass from $\cB$ to $G_2$ and happens via a descent argument part of which involves a pushout of bundles along $\cB^c \rightarrow G_2^c$.

Since the Hecke $G(\Afp)$ acts transitively on the set of components at infinite level, unlike $\prod_{l|N}G(\Q_l)$ in \cite{tl}, we remark that the descent operation to get from $\Sh_{\cB(\Zp)}(\cB,\fX_{\cB})$ to $\Sh_{G_2(\Zp)}(G_2,\fX_2)$ can be made to look rather more group theoretic and like Deligne's original formalism \cite{del2} for the construction of canonical models than the construction of our previous paper.

One obvious expectation, given our motivation of this construction in terms of motives, is that the lisse $\Zp$ sheaves $\omega_{et}$ and the filtered $F$-crystals $\omega_{crys}$ ought to be related by a $p$-adic comparison isomorphism. We therefore check this  (\ref{assoc thm}) in the case of the well-known theorems of Faltings \cite{fal1}. One corollary of this fact, in fact the original motivation for this project, is the following. Recall that the for $\rho \in \Rep(G)$ one roughly expects certain automorphic forms of weight $\rho$ and level $K$ to have associated Galois representations lying in cohomology groups of the form $H^i(\Sh_K(G),\omega_{et}(\rho))$. It is well-known and easy to check that at hyperspecial level $l \not=p$ these cohomology groups are unramified. Our argument gives the analogous result at $l=p$, at least in the proper case.

\begin{thm*}
Let $(G,\fX)$ be a Shimura datum of abelian type. Suppose an open compact $K \subset G(\Af)$ is hyperspecial at $p$, and $\rho: G^c_{\Zbp} \rightarrow GL(V_{\Zbp})$ be an algebraic representation. 

Assume $\Sh_K(G,\fX)$ is proper. Then the $p$-adic Galois representation
$$H^i(\Sh_K(G,\fX)_{\bar{E}}, \omega_{et}(\rho_\Zp)[1/p])$$ is crystalline at all places $v|p$ of $E$.
\end{thm*}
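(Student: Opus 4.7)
The plan is to deduce this from Faltings' comparison theorem applied to the filtered $F$-crystal $\omega_{crys}(\rho_{\Zp})$ on the integral model $\cS_K/\cO_v$, using the associativity statement (\ref{assoc thm}) which says that $\omega_{crys}$ and $\omega_{et}$ are related by the $p$-adic comparison isomorphism.

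First, I would fix a place $v \mid p$ of $E$. Kisin's results give a smooth integral model $\cS_K/\cO_v$, and since by hypothesis $\Sh_K(G,\fX)$ is proper, this integral model is proper and smooth (by properness of the special fibre, together with properness of the generic fibre and smoothness). Next, I apply the crystalline canonical model of the main theorem to $\rho_{\Zp} \in \Rep_{\Zp}(G^c)$ to obtain a filtered $F$-crystal $\cE := \omega_{crys,K^p}(\rho_{\Zp})$ on $\cS_K/\cO_v$, whose associated lisse $\Zp$-sheaf on the rigid generic fibre is precisely $\omega_{et,K^p}(\rho_{\Zp})$.

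Now I would invoke (\ref{assoc thm}) to conclude that $\cE$ and $\omega_{et,K^p}(\rho_{\Zp})$ are \emph{associated} in the sense of Faltings, i.e.\ linked by Faltings' $p$-adic comparison isomorphism over $B_{crys}$-coefficients on the pro-\'etale site. Since $\cS_K/\cO_v$ is smooth and proper and $\cE$ is a strongly divisible filtered $F$-crystal (for $p>2$; at $p=2$ one needs to be slightly more careful, but properness and the weak crystalline canonical model suffice to run Faltings' argument with the appropriate Fontaine-Laffaille-style input), Faltings' theorem \cite{fal1} on crystalline cohomology with nontrivial coefficients applies, yielding a natural isomorphism
$$H^i_{et}(\Sh_K(G,\fX)_{\bar E}, \omega_{et,K^p}(\rho_{\Zp})[1/p]) \otimes_{\Qp} B_{crys} \cong H^i_{crys}(\cS_{K,\kappa(v)}/W(\kappa(v)), \cE)[1/p] \otimes B_{crys}$$
compatible with all structures. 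Comparing ranks on both sides (finite, equal, and crystalline on the right-hand side since the crystalline cohomology of a proper smooth formal scheme with coefficients in a filtered $F$-crystal carries the requisite structure) shows that the left-hand side is crystalline at $v$.

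The main obstacle is verifying the associativity hypothesis in a form strong enough to feed into Faltings' theorem with coefficients; this is exactly what (\ref{assoc thm}) accomplishes, and in turn it rests on the careful construction of $\omega_{crys}$ via Kisin's Hodge-type machinery plus the Liu-Zhu de Rham comparison, and on the $\fS$-module formalism reviewed in \S{\ref{S2}}. A secondary subtlety is the passage from the open compact $K$ in the theorem (which may not be sufficiently small in the sense required to make $\cS_K$ a scheme rather than a stacky/finite quotient) to a sufficiently small $K' \subset K$; this is handled by a standard trace/finite group quotient argument, since the cohomology with $K$-coefficients is a direct summand, cut out by an idempotent $\Gal$-equivariantly, of the cohomology at the finer level $K'$, and crystallinity is preserved under direct summands.
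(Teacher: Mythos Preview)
Your proposal is correct and follows essentially the same route as the paper: apply the main theorem to obtain $\cE_\rho = \omega_{crys}(\rho_{\Zp})$, invoke (\ref{assoc thm}) so that $\cE_\rho[1/p]$ and $\omega_{et}(\rho_{\Zp})[1/p]$ are associated, then apply Faltings' comparison (\ref{comp Qp}) to the proper smooth morphism $\cS_K \to \Spec \cO_v$ to conclude crystallinity. Two small remarks: first, (\ref{comp Qp}) only requires a filtered $F$-\emph{isocrystal}, so strong divisibility is irrelevant for the crystallinity statement and no special care is needed at $p=2$ here; second, your observations about properness of the integral model and about passing to sufficiently small $K'$ via a direct-summand argument are useful points the paper leaves implicit.
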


We also expect our argument to extend with a little more work to similar non-proper situations, and include an outline of how we expect it to go, in particular our hope that a serious such project could be built on current work of Madapusi Pera \cite{mp}.

\section*{Acknowledgements}
The author cannot underscore enough the debt this work owes to his PhD supervisor Mark Kisin. His constant clarity of thought and dedicated patience were vital to its eventual success. Work on this paper also began after a research idea Mark Kisin proposed interacted with another idea that arose in a conversation with Jack Thorne, and suggested this would be an interesting endeavour.

For other useful conversations we thank Chris Blake, George Boxer, Lukas Brantner, Justin Campbell, Erick Knight, Sam Raskin, Ananth Shankar, Tony Scholl, Koji Shimizu, Jack Shotton, Rong Zhou, and Yihang Zhu. Also, I should thank all participants in the Harvard 2013 \emph{Automorphic Forms and Galois Representations} learning seminar where my interest for this particular subject began.

This paper is the second part of the author's PhD thesis completed at and funded by Harvard University, and was begun while the author was on a Kennedy Scholarship.

\section{Filtered $F$-crystals with $G$-structure}
\label{S2}
\subsection{Rees construction}
We proceed to present a perspective on the notion of ``filtered bundles'' that is amenable to a Tannakian formalism and allows one to speak of and manipulate ``filtered $G$-bundles'': namely the Rees construction. In the context of crystalline cohomology it also interacts with Frobenius in a neat way we will see shortly. We suspect much of this is known to the experts, but were unable to find a good reference so develop some of the necessary results from scratch.

\subsubsection{}
Let $R$ be a Noetherian ring. A \emph{flatly filtered bundle over $R$} is a flat finitely generated $R$-module $N$ together with an exhaustive decreasing filtration $\Fil^i$ of $N$ with the property that its associated graded
$$\text{gr}^\bullet_{\Fil} N := \oplus_i \Fil^{i}/\Fil^{i+1}$$
is again flat as an $R$-module.

\subsubsection{}
There are various ways to think about equivariant objects: for now we record the two points of view that are most useful for us. First recall the standard abstract definition. Let $X$ be a scheme, $H$ a group scheme, and $\alpha: H \times X \rightarrow X$ an action of $H$ on $X$. Then to give an $H$-equivariant $X$-scheme\footnote{One may replace the word ``$X$-scheme" here with any notion that sits in a category fibred over $X$-schemes, and all the translations are obvious.} $Y$ is to supply:
\begin{itemize}
\item An $X$-scheme $Y \rightarrow X$.
\item An isomorphism of $H \times X$-schemes\footnote{We may write this as an isomorphism $pr_2^*Y \rightiso \alpha^*Y$ if it is more convenient. This notation often makes our exploitation of various functorialities more apparent.}, 
$$\beta: (H \times X ) \times_{pr_2,X} Y \rightiso (H \times X) \times_{\alpha, X} Y$$
satisfying the following explicit cocycle condition.
\end{itemize}
There are three natural maps $H \times H \times X \rightarrow X$:
$$pr_3 = pr_2 \circ (m_H \times id_X) = pr_2 \circ pr_{23}: H \times H \times X \rightarrow X.$$
$$A:= \alpha \circ (m_H \times id_X) = \alpha \circ (id_H \times \alpha): H \times H \times X \rightarrow X.$$
$$B := pr_2 \circ (id_H \times \alpha) = \alpha \circ pr_{23}: H \times H \times X \rightarrow X.$$

We may construct from $\beta$ a triangle of maps
$$pr_3^*Y \cong pr_{23}^* pr_2^*Y \map{pr_{23}^*\beta} pr_{23}^*\alpha^* Y \cong B^*Y,$$
$$B^*Y \cong (id_H \times \alpha)^* pr_2^*Y \map{(id_H \times \alpha)^*\beta} (id_H \times \alpha)^* \alpha^*Y \cong A^*Y$$
 and
$$pr_3^*Y \cong (m_H \times id_X)^* pr_2^*Y \map{(m_H \times id_X)^* \beta} (m_H \times id_X)^* \alpha^*Y \cong A^*Y.$$

The cocycle condition asserts that this triangle commutes. 

This abstract definition is useful for checking quickly that equivariant objects can be translated between different suitably functorial equivalences of categories, but the following will be more useful for getting our hands on $\Gm$-equivariant objects over $\A^1$. 

Let $U \subset \A^1$ be the complement of $0$ (which as a scheme is isomorphic to $\Gm$). Recall that $U$ is a (trivial) $\Gm$-torsor, so by descent theory there is a natural equivalence between schemes over $S$ and $\Gm$-equivariant schemes over $U$.

\begin{lem}
To give a $\Gm$-equivariant scheme $Y$ over $\A^1_S$, it is equivalent to give the data of
\begin{itemize}
\item An $S$-scheme $Y_1$,
\item An $\A^1_S$-scheme $Y$ such that there exists an identification of $U$-schemes
$$\theta: Y_U \rightiso Y_1 \times_S U$$
with the property that locally there exist generators $g_\alpha$ for $\cO_Y$ such that $\theta(g_\alpha) = h_\alpha \otimes t^{n_{\alpha}}$ for $h_\alpha \in \cO_{Y_1}, n_\alpha \in \Z.$
\end{itemize}
\end{lem}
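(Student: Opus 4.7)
\emph{Plan.} Both directions rest on the standard dictionary identifying $\Gm$-actions with $\Z$-gradings on rings, applied Zariski-locally where we write $Y = \Spec \cA$ for a quasi-coherent $\cO_{\A^1_S}$-algebra $\cA$ (and $Y_1 = \Spec \cB$ an $\cO_S$-algebra). The key observation is that the section $t=1: S \to U$ realises $U \to S$ as a trivial $\Gm$-torsor, so $\Gm$-equivariant $U$-schemes are equivalent to $S$-schemes.

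\emph{Forward direction.} Given a $\Gm$-equivariant $Y$, set $Y_1 := Y \times_{\A^1_S, t=1} S$, so $\cB = \cA/(t-1)$. The equivariance corresponds to a $\Z$-grading $\cA = \bigoplus_n \cA_n$ with $t \in \cA_1$, and descent along the trivial torsor $U \to S$ produces the canonical iso $\theta: Y_U \rightiso Y_1 \times_S U$ which sends a homogeneous $a \in \cA_n$ to $\bar a \otimes t^n$, where $\bar a \in \cB$ is the image of $a$. Decomposing any system of algebra generators of $\cA$ into its weight components then gives the required $g_\alpha$ with $\theta(g_\alpha) = h_\alpha \otimes t^{n_\alpha}$.

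\emph{Backward direction.} Conversely, suppose $(Y_1, Y, \theta)$ satisfies the local generator condition. The iso $\theta$ identifies $\cA[t^{-1}]$ with $\cB[t, t^{-1}]$, which is tautologically $\Z$-graded (with $\cB$ in weight $0$ and $t$ in weight $1$). The hypothesis says $\cA$, viewed inside $\cB[t, t^{-1}]$, is generated as an $\cO_{\A^1_S}$-algebra by homogeneous elements; since products of homogeneous elements stay homogeneous, $\cA$ is a $\Z$-graded subring with intrinsic grading $\cA_n = \cA \cap \cB[t, t^{-1}]_n$, independent of the choice of generators. This intrinsicness is what lets the local constructions glue across Zariski patches, and the resulting grading endows $Y$ with a $\Gm$-action over $\A^1_S$. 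The cocycle condition is inherited from the action on $\cB[t, t^{-1}]$ since $\cA$ is $\Gm$-stable inside it, and that the two constructions are mutually inverse is immediate from the grading--action dictionary.

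\emph{Main obstacle.} The delicate point, and what I expect to be the only genuine issue, is the tacit use of the injection $\cA \hookrightarrow \cA[t^{-1}]$, which requires $t$ to act as a non-zerodivisor on $\cA$. This is automatic when $Y$ is $\A^1_S$-flat, which covers the Rees-construction applications of \S\ref{S2}; more generally one should either impose $t$-torsion-freeness or replace $Y$ by the scheme-theoretic closure of $Y_U$ in $Y$ before running the above argument.
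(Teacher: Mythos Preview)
Your argument is correct and follows essentially the same route as the paper: both directions rest on the grading/action dictionary, with the forward direction reading off homogeneous generators from the grading and the backward direction checking that the canonical $\Gm$-action on $Y_1 \times_S U$ extends across $t=0$ by testing on the given generators. The paper phrases the backward step slightly more explicitly, writing down the coaction $\beta: t \mapsto st$ on $B_1 \otimes_A A[t,t^{-1},s,s^{-1}]$ and verifying it preserves the sublattice $\theta(B) \otimes_{A[t]} A[t,s,s^{-1}]$, whereas you phrase it as showing $\cA$ is a graded subring of $\cB[t,t^{-1}]$; these are the same computation.

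Your flagging of the $t$-torsion issue is apt and in fact more careful than the paper, which tacitly makes the same assumption: its uniqueness step invokes density of $U$ in $\A^1_S$, and its existence step works with the image $\theta(B)$ inside $B_1 \otimes_A A[t,t^{-1}]$, both of which only recover the structure on $B$ itself when $B \to B[t^{-1}]$ is injective. Since the lemma is only ever applied to Rees bundles, which are flat over $\A^1$ by definition, this is harmless in context.
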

\begin{proof}
Firstly, given a $\Gm$-equivariant scheme $Y \rightarrow \A^1_S$, we have the datum

$$\beta: (\Gm \times \A^1 ) \times_{pr_2,\A^1} Y \rightiso (\Gm \times \A^1) \times_{\alpha, \A^1} Y.$$

Pulling back along $\Gm \times \{1\} \subset \Gm \times \A^1$, we obtain an isomorphism
$$\theta: Y_{t=1} \times_S U \rightiso Y_U$$
giving us the datum  $(Y_{t=1},Y,\theta)$ required. It has the property of being locally generated by pure monomial tensors because the $\Gm$-action equips $\cO_Y$ with a grading, from which these generators can be read off.

To finish we need to show that given $(Y_1,Y,\theta)$ there is a unique $\Gm$-equivariant structure on $Y$ inducing this datum via the above procedure. Recall that there is an equivalence $Y_1 \mapsto Y_1 \times_S U$ from $S$-schemes to $\Gm$-equivariant $U$-schemes. Since $U$ is dense in $\A^1_S$, uniqueness follows immediately and for existence we are required to show that the canonical $\Gm$-equivariant structure on $Y_U \rightiso Y_1 \times_S U$ can always be extended to $Y$.

This may be checked locally, where it comes down to the following question. Given an $A[t]$-algebra $B$, an $A$-algebra $B_1$ and an isomorphism $\theta: B \otimes_{A[t]} A[t,t^{-1}] \rightiso B_1 \otimes_A A[t,t^{-1}]$, does the natural automorphism 
$$\beta: (b_1 \mapsto b_1, t \mapsto st, s \mapsto s): B_1 \otimes_A A[t,t^{-1},s,s^{-1}] \rightiso B_1 \otimes_A A[t,t^{-1},s,s^{-1}]$$
induce an automorphism of the lattice $\theta(B) \otimes_{A[t]} A[t,s,s^{-1}]$? Since we have the obvious candidates for left and right inverses, it will suffice to check $\beta$ takes $\theta(B)$ into $\theta(B) \otimes_{A[t]} A[t,s,s^{-1}]$. By assumption (after shrinking $\Spec B$ if necessary) $B$ has generators $b_\alpha$ such that $\theta(b_\alpha) = c_\alpha \otimes t^{n_\alpha}$. These are sent under $\beta$ to $c_\alpha \otimes t^{n_\alpha} s^{n_\alpha} = s^{n_\alpha}\theta(b_\alpha) \in \theta(B) \otimes_{A[t]} A[t,s,s^{-1}]$, as required. 
\end{proof}

\subsubsection{}
Our key definition is the following. A \emph{Rees bundle} over $R$ is a flat finitely generated $R[t]$-module $M$ together with a $\mathbb{G}_{m,R}$-action equivariant for the standard action of $\Gm$ on $\A^1$. 

We remark that one should visualise this as a vector bundle $\cV$ on $\A^1 \times \Spec R$ with an equivariant $\Gm$ action, and the fibre $\cV_{t=1}$ at $1$ as a `filtered bundle' with the filtration given by using the $\Gm$ action to spread out $v \in \cV_{t=1}$ to an invariant section $\tilde{v} \in \cV|_{\Gm}$ and saying that $v \in \Fil^q$ if $\tilde{v}$ vanishes to order at least $q$ at $t=0$. Hopefully with this in mind, the following should be intuitive and help make the idea more precise.

\begin{prop}
\label{rees equivalence}
There is a natural equivalence of exact rigid $\otimes$-categories between flatly filtered bundles over $R$ and Rees bundles over $R$, compabible with base change along a ring map $R \rightarrow S$.
\end{prop}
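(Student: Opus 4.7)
The plan is to exhibit explicit quasi-inverse functors and then verify the tensor, exactness, and base-change compatibilities. In one direction, send a flatly filtered bundle $(N, \Fil^\bullet)$ to the $R[t]$-submodule
$$\Rees(N) := \bigoplus_{i \in \Z} \Fil^i N \cdot t^{-i} \subset N \otimes_R R[t,t^{-1}],$$
equipped with the $\Gm$-action from the grading by $t$-degree. Equivariance for the standard $\Gm$-action on $\A^1$ is automatic, and finite generation over $R[t]$ follows from finite generation of $N$ together with the filtration having only finitely many jumps. In the other direction, given a Rees bundle $M$, apply the module-theoretic analogue of the preceding lemma (whose proof is identical, or equivalently apply the lemma to the vector bundle scheme attached to $M^\vee$) to realize $M$ as an $R[t]$-submodule of $N \otimes_R R[t,t^{-1}]$ for $N := M/(t-1)M$, with $M$ locally generated by pure monomial tensors $h_\alpha \otimes t^{n_\alpha}$; then define $\Fil^i N := \{v \in N : v \otimes t^{-i} \in M\}$, which is a decreasing exhaustive filtration.

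Mutual inversion is then routine. Reducing $\Rees(N)$ modulo $(t-1)$ recovers $N$: each summand $\Fil^i N \cdot t^{-i}$ collapses to the inclusion $\Fil^i N \hookrightarrow N$, and the filtration on $N$ is recovered tautologically from the grading. The reverse composition returns $M$ because the local monomial form of the generators forces $M$ to agree with $\bigoplus_i \Fil^i N \cdot t^{-i}$ inside $N[t,t^{-1}]$.

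The main technical step is matching the flatness conditions on either side. A direct calculation gives $\Rees(N)/t\Rees(N) = \bigoplus_i \Fil^i N / \Fil^{i+1} N = \gr^\bullet_{\Fil} N$, whose flatness over $R$ is precisely the flatly-filtered hypothesis. Moreover each $\Fil^i N$ is an iterated extension of the finitely many flat $R$-modules $\gr^j N$ for $j \geq i$, hence is itself $R$-flat; consequently $\Rees(N)$ is $R$-flat. Since $t$ acts injectively on $\Rees(N)$ by virtue of the ambient embedding in $N[t,t^{-1}]$, the local flatness criterion at the ideal $(t) \subset R[t]$ upgrades $R$-flatness of $\Rees(N)$ and of $\Rees(N)/t\Rees(N)$ to $R[t]$-flatness. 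Conversely, flatness of $M$ over $R[t]$ yields $R$-flatness of $N = M/(t-1)M$ and of $\gr^\bullet N = M/tM$ via base change along $R[t] \to R$.

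Finally, the tensor structure on filtered bundles given by $\Fil^k(M \otimes N) = \sum_{i+j = k} \Fil^i M \otimes \Fil^j N$ corresponds under $\Rees$ to the $R[t]$-tensor product of Rees modules, with the $\Gm$-action induced from the tensor product of graded modules; duals and short exact sequences translate similarly, and compatibility with base change along $R \to S$ is immediate from the explicit formulas. The main obstacle is the flatness correspondence in the third paragraph, where one must verify cleanly that the local flatness criterion applies; this works thanks to the automatic injectivity of $t$ on modules embedded in $N[t,t^{-1}]$ and the inductive structure of the $\Fil^i N$ as extensions of their graded pieces.
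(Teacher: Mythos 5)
Your overall architecture matches the paper's: the same $\Rees(N) = \sum_i \Fil^i N \cdot t^{-i}$ construction, the inverse $M \mapsto M/(t-1)M$ with filtration read off from the $t$-adic order, and the same tensor/base-change formulas. The issue is in your flatness argument. The inference you invoke --- that $R$-flatness of $M$ and of $M/tM$, plus $t$ acting injectively on $M$, implies $R[t]$-flatness via ``the local criterion at $(t)$'' --- is false as a general statement. Take $R = k$ a field and $M = k[t] \oplus k[t]/(t-1)$: this is finitely generated over $k[t]$, $k$-flat, $t$ acts injectively (on $k[t]/(t-1) \cong k$ the element $t$ acts as $1$), and $M/tM = k$ is $k$-flat; yet $M$ is visibly not flat over $k[t]$ because of the torsion summand $k[t]/(t-1)$. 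The local criterion, applied at a prime $\mathfrak{p} \supset (t)$, does give flatness of $M_\mathfrak{p}$ over $R[t]_\mathfrak{p}$ from precisely the data you cite (Tor-vanishing is the nonzerodivisor condition, and $M/tM$ flat over $R$), but it says nothing at primes $\mathfrak{p}$ not containing $t$, which is where the counterexample lives.

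What closes the gap --- and what the paper records as its very first step --- is the observation that $\Rees(N)[t^{-1}] = N \otimes_R R[t,t^{-1}]$, which is flat over $R[t,t^{-1}]$ since $N$ is $R$-flat. This handles all primes away from $\{t=0\}$, and the local criterion (or, as the paper prefers, an explicit free presentation of $\Rees(N)/(t^n)$ after Zariski-localizing) then handles primes containing $(t)$. Equivalently, you could salvage your phrasing by exploiting the grading: for a finitely generated $\Z$-graded, $t$-torsion-free $R[t]$-module with each graded piece $R$-flat, one has $M[t^{-1}] \cong M_d \otimes_R R[t,t^{-1}]$ for $d \gg 0$, and $M_d$ is $R$-flat, so flatness away from $(t)$ comes for free --- but this use of equivariance is essential and needs to be said, since without it the criterion you stated is simply wrong. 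Once this is inserted, the remainder of your argument (mutual inversion, exactness and rigid tensor structure, base change) goes through as in the paper.
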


\begin{proof}
 Given a flatly filtered bundle $(N,\Fil^\bullet)$ we may form its associated Rees bundle
$$\Rees(N) := \sum_i \Fil^i \otimes t^{-i} \subset N \otimes_R R[t,t^{-1}]$$
which (as in the previous lemma) inherits a $\Gm$-equivariant structure from the natural one on the right hand side. 

We must check that $\Rees(N)$ is flat over $R[t]$. Since $N$ is flat, it is clear that $N \otimes_R R[t,t^{-1}] = \Rees(N)[t^{-1}]$ is flat. Thus it suffices to check flatness along a formal neighbourhood of $\{t=0\}$. But
$$\Rees(N)/(t^n) \cong \bigoplus_i \Fil^i/\Fil^{i+n}$$
with multiplication by $t$ shifting elements one index down 
$$ \times t: \Fil^i/\Fil^{i+n} \rightarrow \Fil^i/\Fil^{i+n-1} \subset \Fil^{i-1}/\Fil^{i-1+n}.$$

We need to show this is flat as an $R[t]/(t^n)$-module. Since $R$ is Noetherian we may Zariski-localise and assume everything in sight is free. Freeness of $\text{gr}^\bullet_{\Fil} N$ allows one to prove inductively that we may pick a basis $N \cong R^d$ such that there are integers $d_i$ with the first $d_i$ co-ordinates $R^{d_i} \subset R^d$ giving the submodule $\Fil^i$. With this choice of co-ordinates one has a completely explicit isomorphism
$$(R[t]/(t^n))^d \rightiso \bigoplus_i \Fil^i/\Fil^{i+n}$$
which takes the $j$-th basis element to the corresponding element of $\Fil^p/\Fil^{p+n}$ where $p$ is the greatest number such that $e_j \in \Fil^p$. In particular we see (since it may be checked Zariski locally) that in general $\Rees(N)/(t^n)$ is flat over $R[t]/t^n$.

The inverse is clear: to a Rees bundle $M$ we associate the bundle $N=M/(t-1) \cong M[t^{-1}]^{\Gm}$ filtered by the order of the pole at $t=0$. Since $M$ is flat as an $R[t]$-module, so are $N=M/(t-1)$ and $\gr^\bullet N = M/t$ as $R$-modules.

These operations are obviously inverse, so it suffices to check full-faithfulness in one direction. Given $f:N \rightarrow N'$ a map of flatly filtered bundles it induces a $\Gm$-equivariant map $\Rees(f):\Rees(N) \rightarrow \Rees(N')$ (automatically over $\A^1-\{0\}$ and extending to zero because $f$ respects the filtration). This association is a bijection
$$\Hom(N,N') \cong \Hom(\Rees(N),\Rees(N')).$$
Indeed, the inverse operation can be described: given $g:\Rees(N) \rightarrow \Rees(N')$, $f = g/(t-1): N \rightarrow N'$ is a map of modules and respects the filtration because $g$ extends over $0$.

It is straighforward to check this correspondence respects tensor product and internal hom. If we make a base change of rings $R \rightarrow S$, then we may canonically identify
$$\Rees(N \otimes_R S) = \sum_i (\Fil^i \otimes_R S) \otimes t^{-i} = (\sum_i \Fil^i \otimes t^{-i})\otimes_{R[t]} S[t] = \Rees(N) \otimes_{R[t]} S[t],$$
which proves the claim about base change.
\end{proof}

It makes sense to globalise these objects in the usual way (as coherent sheaves), so given a locally Noetherian scheme $X$ one can talk of the exact rigid $\otimes$-category $\Rees_X$ of \emph{Rees bundles on }$X$, and the above result gives an equivalence between this category and the category of flatly filtered vector bundles which respects base change.

\subsection{Filtered $G$-bundles}
We first recall some basic facts about $G$-bundles and their associated Tannakian formalism, including such results over a Dedekind domain as developed by Broshi \cite{brosh}. We then use the Rees construction to define a notion of a filtered $G$-bundle, and briefly explore the connection between this definition and the definition using Grassmannians as in \cite[\S3]{tl}. 

Note we assume throughout that the base $S$ over which our group is defined is Dedekind, but of course many of the results and definitions are valid in greater generality.

\subsubsection{}
Let $G$ be a flat affine group scheme of finite presentation over a Dedekind base $S$. Then a $G$\emph{-bundle} on an $S$-scheme $X$ is a fpqc morphism $P \rightarrow X$ with a right action of $G$ such that the natural map $(p,g) \mapsto (p,p.g)$
$$P \times_X (G_X) \rightarrow P \times_X P$$
is an isomorphism. Let $\Bun_{X}^G$ be the category (in fact groupoid) of $G$-bundles on $X$.

\subsubsection{}
Now assume $S=\Spec A$ is affine. Let $\Rep_A(G)$ be the category of algebraic representations of $G$ taking values in finite projective $A$-modules (morphisms respecting the $G$-action). This forms a rigid tensor category. We also write $\vVec_X$ for the category of vector bundles on $X$, also a rigid tensor category. We should caution the reader that neither is abelian in general, but both are Quillen-exact\footnote{Recall that for $R$ a commutative ring, an $R$-linear additive category $\cC$ is Quillen-exact iff it is a full additive subcategory of an $R$-linear abelian category $\cC \subset \cC'$ and closed under extensions. It is possible to abstract this notion, viewing a Quillen-exact category as a pair $(\cC,E)$ consisting of an additive category and a distinguished collection $E$ of short exact sequences in $\cC$ satisfying some axioms.
}
 which is enough. The following is standard.

\begin{lem}
Let $P$ be a $G$-bundle on $X$. Then it defines a natural faithful exact $A$-linear tensor functor
$$\omega_P: \Rep_A(G) \rightarrow \vVec_X$$
given by $\omega_P(V) = P \times^G \underline{V}$
which commutes with base change in $X$.
\end{lem}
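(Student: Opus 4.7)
The plan is to construct $\omega_P(V) := P \times^G \underline{V}$ as the fpqc sheaf quotient of $P \times_X \underline{V}_X$ by the diagonal right action $(p,v)\cdot g = (p\cdot g, g^{-1} v)$. The first and most substantive step is to show this quotient is representable by a vector bundle on $X$. Pick an fpqc cover $U \to X$ trivialising $P$, so $P|_U \cong G \times_S U$; then the diagonal $G$-action is free and the quotient identifies with $\underline{V} \times_S U$, i.e.\ with the pullback to $U$ of the finite projective $A$-module $V$. The canonical descent datum this carries with respect to $U \times_X U \rightrightarrows U$ (built from the cocycle encoded in $P$) produces, via fpqc descent for finite locally free sheaves, a vector bundle $\omega_P(V)$ on $X$. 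Functoriality in $V$ is immediate because a $G$-equivariant morphism $V \to V'$ gives a $G$-equivariant morphism of contracted products that descends.

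Next I would verify that $\omega_P$ is an $A$-linear exact tensor functor. Fpqc locally $\omega_P$ becomes $V \mapsto V \otimes_A \cO_U$, which is manifestly symmetric monoidal, unital, $A$-linear, and exact (any short exact sequence in $\Rep_A(G)$ is $A$-exact and in fact $A$-split, since the objects are finite projective over $A$). The compatibility isomorphisms $\omega_P(V \otimes V') \cong \omega_P(V) \otimes \omega_P(V')$ and $\omega_P(A) \cong \cO_X$ are defined locally on the trivialising cover and patched by uniqueness of descent; the associativity, commutativity and unit diagrams commute because they do fpqc locally. Exactness is checked the same way since fpqc descent is exact. Faithfulness likewise reduces to the local picture: if $f : V \to V'$ satisfies $\omega_P(f) = 0$, then fpqc locally $f \otimes 1 = 0$ as a map $V \otimes_A \cO_U \to V' \otimes_A \cO_U$, and faithful flatness of $U \to S$ then forces $f = 0$. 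Base change compatibility along $Y \to X$ is built into the construction, since the formation of fpqc quotients and of contracted products commutes with arbitrary pullback.

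The main obstacle is really only the representability and local triviality input: one needs the (essentially tautological) fact that a $G$-bundle trivialises fpqc locally, together with fpqc descent for finite locally free modules, a classical result that in the Dedekind setting is carefully recorded in Broshi \cite{brosh}. Once these ingredients are available, every remaining property of $\omega_P$ is formal and drops out of fpqc-local computation on a trivialisation of $P$.
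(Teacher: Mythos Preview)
The paper does not actually prove this lemma; it prefaces the statement with ``The following is standard'' and gives no argument. Your write-up is correct and is precisely the standard proof one would give: construct $P \times^G \underline{V}$ by fpqc descent from a trivialising cover, then verify the tensor, exactness, faithfulness, and base-change properties fpqc locally where they are obvious.

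One small remark on faithfulness: your argument invokes faithful flatness of $U \to S$ to conclude $f=0$ from $f \otimes 1 = 0$, but $U$ is an fpqc cover of $X$, not of $S$, so this step implicitly assumes that $X \to S$ hits every point of $S$ (e.g.\ $X \to S$ faithfully flat, or at least surjective). Without such a hypothesis the functor can genuinely fail to be faithful (take $A=\Z$, $X=\Spec \F_p$, and $f$ multiplication by $p$). In the paper's applications $X$ is always large enough over $S$ for this not to be an issue, and the lemma is being stated at the level of precision customary for such background material, but it is worth being aware of the hidden assumption.
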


\subsubsection{}
We note that this proposition has a sort of converse. There is an obvious ``forgetful'' tensor functor 
$$\omega_X: \Rep_A(G) \rightarrow \vVec_A \rightarrow \vVec_X.$$
Recall that a \emph{fibre functor} on an additive exact rigid $A$-linear tensor category $\cC$ is a faithful exact $A$-linear tensor functor $\cC \rightarrow \vVec_X$. Let $\Fib(\cC,\vVec_X)$ denote the category of such functors. The converse is as follows (\cite[1.2]{brosh}).

\begin{thm}
\label{tann}
Suppose $A$ is a Dedekind domain and $G$ is flat affine of finite type over $A$ with connected fibres, $X/A$ a scheme. Then the functor
$$\Bun_X^G \ni P \mapsto \omega_P \in \Fib(\Rep_A(G),\vVec_X)$$
is an equivalence of categories, the inverse being given by
$$F \mapsto \uIsom^\otimes(\omega_X,F).$$
This equivalence is 2-functorial in $X$ and $G$.
\end{thm}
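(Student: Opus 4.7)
The plan is to construct the inverse functor explicitly as $F \mapsto P_F := \uIsom^\otimes(\omega_X, F)$, equipped with the right $G$-action induced by the tautological left $G$-action on $\omega_X$, and then show four things: (a) $P_F$ is representable as an $X$-scheme; (b) $P_F$ is fpqc-locally trivial, hence a $G$-bundle; (c) the counit $\omega_{P_F} \rightiso F$ and the unit $P \rightiso P_{\omega_P}$ are natural isomorphisms; (d) the whole package is $2$-functorial in $X$ and $G$. Points (a), (c), (d) are formal once (b) is established, so the heart of the theorem is the local triviality statement.

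For the unit $P \rightiso P_{\omega_P}$, working locally I may assume $P = G_X$ is the trivial bundle, in which case $\omega_P$ is canonically identified with $\omega_X$ and $\uIsom^\otimes(\omega_X, \omega_X) = \underline{\Aut}^\otimes(\omega_X)$; by the standard (rigid, exact, $\otimes$-linear) reconstruction of $G$ from its category of representations, this automorphism sheaf is $G_X$, and the identification is equivariant for the right translation $G$-action on both sides. Since $\omega_P$ is built by $G$-twisting $\omega_X$, this identification glues across a trivializing cover of $P$ to give the global unit. Similarly, once (b) is known, the counit $P_F \times^G \underline{V} \to F(V)$ given by evaluation is, after a trivializing cover $X' \to X$, the tautological isomorphism $\underline{V}_{X'} \rightiso F(V)_{X'}$ transported through a chosen trivialization, hence an isomorphism on the nose.

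The main obstacle is (b), the existence of an fpqc cover $X' \to X$ over which $F|_{X'} \cong \omega_{X'}$ as tensor functors. Representability (a) itself is an elementary consequence: $\uIsom^\otimes$ is cut out by closed conditions (preservation of tensor products, units, duals, and all morphisms in $\Rep_A(G)$) inside $\prod_{V} \underline{\Isom}(\underline{V}_X, F(V))$, and for any faithfully exhaustive finite collection of representations the intersection is represented by an affine $X$-scheme of finite presentation, which transfers structure along the $G$-action to give a fpqc candidate for a torsor. The local triviality is proved in the style of Saavedra--Deligne together with the Dedekind refinements of Broshi: first one restricts to a geometric point $x \to X$, where $F_x: \Rep_A(G) \to \vVec_{\kappa(x)}$ is a fibre functor on a neutral Tannakian category over the field $\kappa(x)$ (after extending scalars); connectedness of the fibres of $G$ ensures the resulting gerbe over $\Spec \kappa(x)$ is neutral (using Steinberg's vanishing of $H^1$ for connected affine groups over strictly henselian rings with algebraically closed residue field), so $F_x \cong \omega_{\kappa(x)}$ after passage to a strict henselisation. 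One then promotes this pointwise trivialization to an fpqc local one using Noetherian approximation and flatness: the locus where a chosen trivialization at $x$ extends to a tensor isomorphism is open in an fpqc neighbourhood, and the Dedekind hypothesis together with flatness of the objects in the image of $F$ controls the lifting across primes of $A$.

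The hard step is undoubtedly this fibrewise-to-global local triviality in paragraph three; without the connected-fibres hypothesis one would run into nontrivial inner forms of $G$ on the nose, and without the Dedekind hypothesis lifting trivializations over thickenings would require additional smoothness of $G$. Granting this, $2$-functoriality and the tensor, exactness, and base-change compatibilities are inherited formally from the corresponding properties of $\uIsom^\otimes$ and of the $G$-twisting construction.
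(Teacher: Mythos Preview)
The paper does not supply its own proof of this theorem; it simply cites Broshi \cite[1.2]{brosh} and moves on. So there is no ``paper's proof'' to compare against beyond that reference.

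Your outline is broadly the correct shape of the Saavedra--Deligne--Broshi argument, but you have misidentified where the connected-fibres hypothesis does its work. It is not used to neutralize a gerbe via Steinberg's $H^1$ vanishing: over an algebraically closed field any two fibre functors on a neutral Tannakian category are isomorphic by Deligne's theorem, with no connectedness needed. The connectedness hypothesis enters earlier and more structurally in Broshi's argument: over a Dedekind base one needs that $\Rep_A(G)$ is tensor-generated by a single faithful representation (so that every object is a subquotient of some $V^{\otimes m} \otimes (V^\vee)^{\otimes n}$), and it is precisely this tensor-generation statement that requires connected fibres when $A$ is not a field. This is what lets one reduce representability of $\uIsom^\otimes(\omega_X,F)$ to a finite-type closed subscheme of a single $\uIsom(\underline{V}_X,F(V))$, and what drives the comparison of $F$ with $\omega_X$ locally. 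Your paragraph on ``promoting pointwise trivializations'' via Noetherian approximation and open loci is also too vague to stand as written; in Broshi the passage from fibres to an fpqc neighbourhood goes through an explicit analysis of the torsor $\uIsom^\otimes(\omega_X,F)$ being faithfully flat over $X$, which again leans on the Dedekind and connectedness hypotheses in the way just described.

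So: the skeleton is right, but if you want an honest proof rather than a citation, rework the role of connectedness and replace the hand-wave about open loci with the flatness argument for $\uIsom^\otimes(\omega_X,F) \to X$.
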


Since this equivalence is functorial, a similar equivalence will be true for many categories richer than $\vVec_X$, with their ``$G$-valued'' equivalent being expressible as a $G$-bundle with extra structures on the one hand and a tensor functor on the other. We shall make liberal use of this setup throughout the paper, including the following.

\subsubsection{}
As above let $S=\Spec A$ be Dedekind, $G/S$ a flat affine finitely presented group scheme, and $X/S$ locally Noetherian. 

We define a \emph{Rees $G$-bundle } on $X$ to be a $G$-bundle $\cP$ on $\A^1 \times X$ together with an equivariant $\Gm$-structure (that commutes with the $G$-action). Given a $G$-bundle $P$ on $X$, a \emph{Rees structure on }$P$ is the data of a Rees $G$-bundle $\cP$ together with an isomorphism $\cP/(t-1) \rightiso P$. Let $\Rees_X^G$ denote the category of Rees $G$-bundles. Again, we have a ``Tannakian'' description of such objects.

\begin{prop}
\label{2.2.7}
Given a Rees $G$-bundle $\cP$ on $X$ and a representation $\rho:G \rightarrow GL(V)$ where $V$ is a finite projective $A$-module, we obtain a canonical Rees bundle
$$ \cV_{\rho} = \cP \times V / G$$
on $X$.
This association $\omega_{\cP}: \rho \mapsto \cV_{\rho}$ is a fibre functor $\Rep_A(G) \rightarrow \Rees_X$, and the map $\cP \mapsto \omega_{\cP}$ gives rise to an equivalence of categories, functorial in $G$ and $X$,
$$\Rees_X^G \cong \Fib(\Rep_A(G), \Rees_X).$$
\end{prop}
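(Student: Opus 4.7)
The strategy is to bootstrap from the Tannakian equivalence \ref{tann} applied over the base $\A^1_S \times_S X$ and then transport the $\Gm$-equivariance structures on both sides using (the globalised form of) \ref{rees equivalence}. Unwinding definitions, a Rees $G$-bundle on $X$ is exactly a $\Gm$-equivariant $G$-bundle on $\A^1 \times X$ (for the standard action $\alpha$ on $\A^1$ extended trivially in $X$), while the globalised \ref{rees equivalence} identifies Rees bundles on $X$ with $\Gm$-equivariant vector bundles on $\A^1 \times X$ as exact rigid tensor categories.

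First I would apply \ref{tann} with base $\A^1 \times X$ in place of $X$ to obtain a 2-functorial equivalence
$$\Bun^G_{\A^1 \times X} \cong \Fib(\Rep_A(G), \vVec_{\A^1 \times X}).$$
By 2-functoriality of this equivalence under pullback along $pr_2$ and $\alpha$ (and the three composite maps $\Gm \times \Gm \times \A^1 \times X \rightarrow \A^1 \times X$ appearing in the cocycle), a $\Gm$-equivariance datum $\beta: pr_2^*\cP \rightiso \alpha^*\cP$ on a $G$-bundle $\cP$ satisfying the cocycle condition translates into a collection of equivariance data $\beta_\rho: pr_2^*\omega_\cP(\rho) \rightiso \alpha^*\omega_\cP(\rho)$, natural in $\rho$ and satisfying the analogous cocycle. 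This yields
$$\Rees^G_X \cong \Fib(\Rep_A(G), \vVec^{\Gm}_{\A^1 \times X}),$$
where $\vVec^{\Gm}_{\A^1 \times X}$ denotes $\Gm$-equivariant vector bundles on $\A^1 \times X$. Composing with the exact tensor equivalence $\vVec^{\Gm}_{\A^1 \times X} \cong \Rees_X$ produced by (the globalised) \ref{rees equivalence} then gives the desired $\Rees^G_X \cong \Fib(\Rep_A(G), \Rees_X)$. A short diagram chase then confirms that, traced through this chain, a Rees $G$-bundle $\cP$ is sent to precisely the functor $\rho \mapsto \cP \times^G \uV$ of the statement: the associated-bundle construction commutes with pullback along $pr_2$ and $\alpha$, so the equivariance on $\cP$ descends to each $\cP \times^G \uV$, which via \ref{rees equivalence} corresponds to the Rees bundle with underlying $R$-module $P \times^G \uV$ and filtration read off from the $\Gm$-action.

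The principal technicality is verifying that the cocycle condition for $\beta$ transports coherently, and $\rho$-naturally, to cocycle conditions on each $\cP \times^G \uV$; this is formal from 2-functoriality of \ref{tann} but requires writing the three triangles of the equivariance formalism out explicitly and checking they are interchanged by the Tannakian equivalence. Exactness and faithfulness of $\omega_\cP$ as a functor into $\Rees_X$ are automatic: they hold for the underlying vector-bundle-valued functor on $\A^1 \times X$ by \ref{tann} applied there (since $\cP$ is fpqc-locally trivial), and both properties are preserved by the tensor-exact equivalence of \ref{rees equivalence}.
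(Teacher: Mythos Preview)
Your approach is essentially the paper's: apply \ref{tann} over $\A^1 \times X$ and transport the $\Gm$-equivariance via 2-functoriality with respect to pullback. The paper compresses your careful cocycle-tracking into the single remark that the Tannakian equivalence is ``functorial with respect to pullback'', but the content is the same.

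One small misattribution: the identification $\Rees_X \cong \vVec^{\Gm}_{\A^1 \times X}$ you invoke is not (\ref{rees equivalence}) but simply the \emph{definition} of a Rees bundle in this paper (a $\Gm$-equivariant flat module over $\A^1 \times X$). Proposition~\ref{rees equivalence} is the equivalence with flatly \emph{filtered} bundles, which you do not actually need here. This does not affect the argument.
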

\begin{proof}
The first part is immediate from descent theory, noting that since the equivariant $\Gm$-action on $\cP$ commutes with the $G$-action it also descends. That the association is a fibre functor can be seen immediately after composing with the natural $\Rees_X \rightarrow \vVec_{\cP_{t=1}}$.

For the final statement, note that \ref{tann} already gives us the functorial equivalence
$$\vVec_{X \times \A^1}^G \cong \Fib(\Rep_A(G),\vVec_{X \times \A^1}).$$

To put a $\Gm$-equivariant structure on both sides is the same because we also have the equivalence
$$\vVec^G_{\Gm \times X \times \A^1} \cong \Fib(\Rep_A(G),\vVec_{\Gm \times X \times \A^1})$$
and these are functorial with respect to pullback.
\end{proof}

\subsubsection{}
This definition is related to one we made in a previous paper \cite[\S3.3]{tl} defining a notion of filtered $G$-bundle in terms of a map to a flag variety. We record the setup and the comparison result here for completeness. We assume for this discussion that $G$ is connected reductive.

Suppose $A'/A$ is an \'etale cover, and $\mu: \mathbb{G}_{m,A'} \rightarrow G_{A'}$ a cocharacter defined over $A'$. This defines a parabolic $Q_\mu \subset G_{A'}$ corresponding to the nonnegative (for $\mu$) root groups, and we say the \emph{conjugacy class of $\mu$ is defined over $A$} if there is a component $Z_{\mu} \subset \Par_{G/A}$ defined over $A$ with connected geometric generic fibre and which contains the point $Q_\mu \in \Par_{G/A}(A')$. Recall \cite[3.2.2]{tl} that $Z_{\mu}$ parameterises parabolic subgroups of $G$ which are conjugate to $Q_\mu$ \'etale locally, and comes equipped with the natural $G$-action by conjugation.

Then in \cite[3.3]{tl} we defined a $\mu$-filtration on a $G$-bundle $P/X$ to be a $G$-equivariant map $$\gamma: P \rightarrow Z_{\mu}$$
and showed that to give such a datum is to give a fibre functor
$$\omega_\gamma: \Rep_A(G) \rightarrow \Fil_X$$
which \'etale locally looks like the filtration defined by $\mu$. In particular, the filtered bundles so constructed are flatly filtered, so by (\ref{rees equivalence}) and (\ref{2.2.7}) we see that the data of $\gamma$ induces a Rees structure on $P$. Of course the converse is also true provided we can find a suitable cocharacter that \'etale locally captures the shape of a given filtration coming from a Rees bundle.

\subsubsection{}
This description is also easily related to reduction to a parabolic. Indeed suppose $A=A'$, so $\mu$ and $Q_\mu$ are defined over $A$. Then $Z_\mu = G/Q_\mu$ and for $P$ a $G$-torsor we may form a correspondence between $\mu$-filtrations $\gamma: P \rightarrow Z_\mu$ and reductions $(P_\mu,\iota)$ of $P$ to the parabolic subgroup $Q_\mu$, where $P_\mu$ is a $Q_\mu$ torsor and $\iota:P_\mu \times^{Q_{\mu}} G \rightiso P$.

Indeed, given $\gamma$ we may take the fibre along $\gamma$ over $[1] \in G/Q_\mu$ to get a reduction to $Q_\mu$ and conversely given a reduction $P_\mu \times^{Q_\mu} G \rightiso P$ of $P$ to $Q_\mu$ we may define $\gamma$ by
$$P \leftiso P_\mu \times^{Q_\mu} G \map{(p,g) \mapsto [g]} G/Q_\mu.$$
We leave it as a simple exercise for the reader to check this correspondence gives an equivalence.

\subsection{Filtered $G$-bundles with connection}
We next need to briefly discuss the algebraic notion of a flat connection on a $G$-bundle and the Griffiths transversality condition for a filtered $G$-bundle. 

\subsubsection{}
Let $X \rightarrow S$ be a family and $\Delta^2(1) = \Delta^2_{X/S}(1)$ a first order neighbourhood of the diagonal in $X \times_S X$. This comes equipped with a diagonal map $\delta: X \hookrightarrow \Delta^2(1)$ which is a closed immersion and two projection maps $p_1,p_2: \Delta^2(1) \rightarrow$. To express the notion of a flat connection we also need the first order neighbourhood $\Delta^3(1)$ of the diagonal in $X\times_S X \times_S X$ and its three projections $p_{12},p_{23},p_{13}:\Delta^3(1) \rightarrow \Delta^2(1)$.

With this notation, given $P \rightarrow X$ a $G_X$-bundle, a connection on $P$ (relative to $X/S$) is an isomorphism
$$\nabla: p_1^*P \rightiso p_2^*P$$
such that $\delta^*\nabla = \id_P$. We say such a connection is flat if it satisfies the cocycle condition
$$p_{13}^*(\nabla) = p_{23}^*(\nabla) \circ p_{12}^*(\nabla).$$

\subsubsection{}
\label{2.3.2}
Let $\vVec^\nabla_{X/S}$ denote the $\otimes$-category of vector bundles with flat connection on $X/S$, and suppose $G$ and $S$ are defined over a common Dedekind base $\Spec A$. Recall (by exactly the same argument as \cite[4.3.3]{tl}) that to give a $G_X$ bundle with flat connection is the same as giving a fibre functor
$$\Rep_A(G) \rightarrow \vVec^\nabla_{X/S}$$
or just giving a fibre functor $\omega: \Rep_A(G) \rightarrow \vVec_X$ and endowing for one's favourite faithful representation $G \hookrightarrow GL(V)$ the vector bundle $\omega(V)$ with a flat connection for $X/S$ that is trivial on the $G$-invariant subspaces of $\omega(V)^\otimes$.

\subsubsection{}
We next discuss the ``Griffiths transversality'' condition. Suppose $(\cV,\nabla,\Fil^\bullet)$ is a vector bundle with a connection on $X/S$ and decreasing filtration. The Griffiths condition states that for all $i$,
$$\nabla(\Fil^i) \subset \Fil^{i-1} \otimes \Omega_{X/S}^1.$$
Let $\Fil^\nabla_{X/S}$ denote the category of filtered vector bundles on $X$ with flat connection (relative to $X/S$) satisfying the Griffiths condition.

Given a $G$-bundle $P$ with a Rees structure and a flat connection, we get a fibre functor taking values in filtered vector bundles with flat connection, and we say it is a \emph{filtered $G$-bundle with flat connection} only if the image of this fibre functor lies in $\Fil^\nabla_{X/S}$. We denote the category of such bundles by $\Fil^{\nabla,G}_{X/S}$.

\begin{lem}
\label{2.3.4}
\begin{enumerate}
\item To check a $G$-bundle $P$ with Rees structure and flat connection lies in $\Fil^{\nabla,G}_{X/S}$ it suffices to check the Griffiths condition on $\omega_P(V)$ for $V$ a faithful representation of $G$.
\item Given $g:Y \rightarrow X$, the natural pullback defines a functor
$$g^*: \Fil^{\nabla,G}_{X/S} \rightarrow \Fil^{\nabla,G}_{Y/S}.$$
\item Given a map of connected reductive $A$-groups $G \rightarrow H$ the usual pushforward of torsors functor defines
$$(-) \times^G H: \Fil^{\nabla,G}_{X/S} \rightarrow \Fil^{\nabla,H}_{X/S}.$$
\end{enumerate}
\end{lem}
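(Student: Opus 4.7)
The plan is to handle all three parts by invoking the Tannakian description from (\ref{2.3.2}) and checking that Griffiths transversality is stable under the standard tensor-categorical operations and under the two functorialities (pullback along $g:Y \to X$ and pushforward along $G \to H$) that appear.

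For (1), since $V$ is a faithful representation of $G$, every $W \in \Rep_A(G)$ may be built from $V$ and $V^\vee$ by successive direct sums, tensor products, and passage to subquotients; this is the standard tensor-generation result for reductive groups, valid over our Dedekind base. It therefore suffices to check that Griffiths transversality is closed under these operations. Direct sums, subrepresentations, and quotients (equipped with the induced and quotient filtrations) are immediate. For tensor products, using the convolution filtration $\Fil^i(V_1 \otimes V_2) = \sum_{p+q=i} \Fil^p V_1 \otimes \Fil^q V_2$, the Leibniz rule gives
$$\nabla(\Fil^p V_1 \otimes \Fil^q V_2) \subset (\Fil^{p-1} V_1 \otimes \Fil^q V_2 + \Fil^p V_1 \otimes \Fil^{q-1} V_2) \otimes \Omega^1_{X/S},$$
which is contained in $\Fil^{i-1}(V_1 \otimes V_2) \otimes \Omega^1_{X/S}$. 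For duals, the filtration $\Fil^i V^\vee = (\Fil^{1-i} V)^\perp$ together with the Leibniz rule applied to the evaluation pairing $V \otimes V^\vee \to A$ forces the induced connection on $V^\vee$ to shift this filtration by exactly one in the required direction.

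For (2), pullback of the $G$-bundle, of the Rees structure, and of the flat connection is entirely formal. The Griffiths condition transfers along the canonical map $g^*\Omega^1_{X/S} \to \Omega^1_{Y/S}$: the relation $\nabla(\Fil^i) \subset \Fil^{i-1} \otimes \Omega^1_{X/S}$ pulls back to $g^*\nabla(g^*\Fil^i) \subset g^*\Fil^{i-1} \otimes g^*\Omega^1_{X/S}$ and then maps into $g^*\Fil^{i-1} \otimes \Omega^1_{Y/S}$. For (3), the pushforward of torsors $P \mapsto P \times^G H$ corresponds under the Tannakian dictionaries of (\ref{tann}) and (\ref{2.2.7}) to precomposition of fibre functors with the restriction $\Rep_A(H) \to \Rep_A(G)$; this is compatible with the Rees structure (the $\Gm$-action on $\cP$ commutes with $G$ and so descends along $G \to H$) and with the flat connection (connections descend along torsor pushforwards in the usual way). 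Since the image of the composite $\Rep_A(H) \to \Rep_A(G) \to \Fil^\nabla_{X/S}$ is contained in the image of the original fibre functor, Griffiths transversality is inherited automatically. The only mild subtlety anywhere is the tensor-generation input underlying (1); the remaining content of the lemma is formal bookkeeping about how $\Gm$, $G$, $\nabla$, and $\Fil$ behave under the usual tensor, dual, pullback, and pushforward constructions.
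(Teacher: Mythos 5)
Your argument follows the paper's approach for all three parts, and parts (2) and (3) are essentially identical to the paper's: pullback preserves Griffiths via the canonical map on differentials, and pushforward corresponds to precomposition with $\operatorname{Res}: \Rep_A(H) \to \Rep_A(G)$. The one place you diverge is in (1). The paper uses the fact that every $W \in \Rep_A(G)$ is a \emph{direct summand} of a tensor construction $\omega_P(V)^\otimes$, so Griffiths transversality is inherited trivially from the ambient object. You instead invoke tensor-generation via \emph{subquotients} and then must argue closure of the Griffiths condition under sub- and quotient objects. Over a general Dedekind base $A$ this is the less convenient route: $\Rep_A(G)$ (representations on finite projective $A$-modules) is not closed under arbitrary subquotients, and even when a subobject does live in the category, passing Griffiths down to it requires the induced filtration to be strictly compatible (and flatly filtered) together with flatness of $\Omega^1_{X/S}$ so that intersecting with $N\otimes\Omega^1_{X/S}$ behaves as expected --- points you label ``immediate'' without spelling out. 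None of this is a fatal gap (for $G$ reductive with connected fibres the summand and subquotient versions of tensor-generation coincide, and $\Omega^1_{X/S}$ is locally free since $X/S$ is smooth), but the direct-summand formulation is precisely what lets the paper bypass this bookkeeping, and you'd do well to adopt it.
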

\begin{proof}
For (1) note that if $\omega_P(V)$ satisfies the Griffiths condition, so does $\omega_P(V)^\otimes$ and for any representation $W$ of $G$, $\omega(W)$ is a summand of $\omega_P(V)^\otimes$ so also satisfies the Griffiths condition. The other parts follow because (for (2)) the pullback of a vector bundle with connection satisfying Griffiths also satisfies Griffiths and (for (3)) by the diagram 

$$\omega_{P\times^G H}: \Rep_A(H) \map{\operatorname{Res}} \Rep_A(G) \map{\omega_P} \Fil^{\nabla}_{X/S}.$$
\end{proof}

\subsection{Filtered $F$-crystals with $G$-structure}
For this section, suppose $G/\Zp$ is connected reductive, $\kappa/\Fp$ a finite field, $W=W(\kappa)$ its ring of Witt vectors and $K=W[1/p]/\Qp$ the corresponding unramified field extension. Suppose we are working over a smooth scheme $X/W$ whose formal completion we denote by $\fX/\Spf(W)$ and rigid generic fibre by $\fX_{\eta}/\Spa(K,W)$.

\subsubsection{}
Recall that with such a setup we have a notion of a (coherent) \emph{crystal} on $\fX/\Spf(W)$ which is a finite locally free sheaf $\cF/\fX$ together with a flat topologically quasi-nilpotent connection $\nabla$ with respect to $\fX/W$. Letting $\operatorname{Crys}_{\fX/W}$ be the category of such, there is in particular a $p$-adic completion functor

$$\hat{(-)}: \vVec_{X/W}^\nabla \rightarrow \operatorname{Crys}_{\fX/W}.$$

One can also of course define the notion of a filtered crystal via the Griffiths condition as in the previous section.

Another important construction for us will be the category $\operatorname{Crys}_{\fX_{\eta}/K} = \operatorname{Crys}_{\fX/W}[1/p]$ of \emph{isocrystals} which can be viewed either as the $p$-isogeny category of $\operatorname{Crys}_{\fX/W}$ or as the category of vector bundles on $\fX_{\eta}$ with flat topologically quasi-nilpotent connection. From either perpsective it is clear there is a natural functor

$$(-)_{\eta}: \operatorname{Crys}_{\fX/W} \rightarrow \operatorname{Crys}_{\fX/W}[1/p].$$

Again of course a similar functor exists for the corresponding categories also equipped with a flat filtration satisfying the Griffiths condition, which we will refer to as ``filtered crystals'' and ``filtered isocrystals'' respectively with the flatness and Griffiths conditions implicit.

\subsubsection{}
We next recall the notion of a filtered $F$-crystal. Irritating technical considerations can arise with respect to Frobenius acting on lattices, so we will first define a filtered $F$-isocrystal and then effectively define filtered $F$-crystals to be strongly divisible lattices in $F$-isocrystals. Perhaps this will conflict with some definitions in the literature but it suffices for our purposes and seems to be that which is minimally confusing.

First, letting $F:X_0 \rightarrow X_0$ denote absolute Frobenius, we cover $\fX$ by affine opens $U_\alpha$ for which (as may be seen by taking local co-ordinates and using the smoothness of $X$) we have lifts $\sigma_\alpha: U_\alpha \rightarrow U_\alpha$ of $F$ which may be assumed finite, flat and \'etale in characteristic zero. A \emph{filtered $F$-isocrystal} is then a filtered isocrystal $\cF/\fX_{\eta}$ together with for each $\alpha$ a Frobenius isomorphism
$$\phi_\alpha: \sigma_\alpha^*\cF_{U_\alpha} \rightiso \cF_{U_\alpha}.$$
These are required to satisfy two conditions: they must be horizontal with respect to the natural connections on both sides. Second, they must be ``compatible on overlaps'' in the sense that the map 
$$\sigma_{\alpha}^*\cF|_{U_\alpha \cap U_\beta} \rightiso \cF|_{U_\alpha \cap U_\beta} \leftiso \sigma_{\beta}^*\cF|_{U_\alpha \cap U_\beta}$$
must be the natural isomorphism induced by the connection $\nabla$ (since $\sigma_\alpha, \sigma_\beta$ are identical modulo $p$). In particular, the latter condition ensures that this notion is independent of the choice $\{(U_\alpha,\sigma_\alpha)\}$ of local Frobenius lifts. 

We let $\FCrys_{\fX_{\eta}/K}$ denote the category of such filtered $F$-isocrystals. It is well-known that given a map $f: \fX_{\eta} \rightarrow \fY_{\eta}$ of rigid spaces over $K$, there is a canonical pullback functor
$$f^*: \FCrys_{\fY_\eta/K} \rightarrow \FCrys_{\fX_\eta/K}$$
which we emphasise does not depend on making choices of local Frobenius lift.

\subsubsection{}
Next as we head towards a definition of filtered $F$-crystals we first revisit the Griffiths condition, remaining in the context of a $p$-adic formal smooth scheme but exploring a direction that we suspect could be interesting in other situations.

If $U \subset \fX$ an open and $U \hookrightarrow T$ a nilpotent divided power thickening we may give $T$ the filtration by divided powers (by which we mean only the divided powers on $\Ker(\cO_T \rightarrow \cO_U)$, not including any in the ``$p$ direction'') and using the Rees construction obtain the a $\Gm$-equivariant formal scheme $\cT$ over $\A^1_{W}$. Moreover if we give the ring of functions $U$ the filtration putting everything in degree zero, the map $U \hookrightarrow T$ trivially preserves the filtrations which translates into a $\Gm$-equivariant map $U \times \A^1 \hookrightarrow \cT$.

Let $\cF$ be a crystal on $X_0/W$ and suppose $\cF_{\fX}$ comes with a flat filtration $\Fil^\bullet$. We say that the pair $(\cF,\Fil^\bullet)$ satisfies the Crystal-Filtration condition on $\fX/W$ if for any nilpotent divided power thickening $U \hookrightarrow T$ of an open $U \subset \fX$ and pair of sections $s_1,s_2: T \rightarrow U$, the natural isomorphism given by the crystal property
$$s_1^* \cF_U \rightiso \cF_T \rightiso s_2^*\cF_U$$
is an isomorphism of filtered modules with respect to the divided power filtration on $T$.

\begin{lem}
A pair $(\cF,\Fil^\bullet)$ consisting of a crystal and a filtration satisfies the Crystal-Filtration condition iff on $\fX$ the connection $\nabla: \cF_{\fX} \rightarrow \cF_{\fX} \otimes \Omega^1_{\fX/W}$ and filtration $\Fil^\bullet$ satisfy Griffiths transversality.
\end{lem}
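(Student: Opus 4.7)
The plan is to establish the equivalence via an infinitesimal-to-integral translation: Griffiths transversality is precisely the Crystal-Filtration condition read off on the first order DP thickening of the diagonal, and the general Crystal-Filtration condition will then follow from Griffiths by a Taylor expansion argument. Throughout I would work Zariski-locally with étale coordinates $x_i$ on $\fX$ and equip each pullback $s^*\cF_U$ with the convolved filtration
\[ \Fil^n(s^*\cF_U) := \sum_{a+b=n} s^*(\Fil^a\cF|_U) \cdot J^{[b]}, \]
where $J$ denotes the DP ideal of the thickening in question.

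For the forward implication, I would specialize the Crystal-Filtration condition to $U = \fX$ and $T = \Delta^2_{\fX/W}(1)$, the first order neighborhood of the diagonal in $\fX \times_W \fX$, with its two projections $p_1, p_2$ as sections. The defining ideal $J$ satisfies $J^2 = 0$, so its DP filtration is concentrated in degrees $0$ and $1$. Setting $y_i := p_1^*(x_i) - p_2^*(x_i) \in J$, the crystal isomorphism $p_2^*\cF \rightiso p_1^*\cF$ sends $p_2^*(f)$ to $p_2^*(f) + \sum_i y_i \cdot p_2^*(\nabla_{\partial_i}f)$. For $f \in \Fil^n\cF$, demanding that the image lie in $p_1^*(\Fil^n\cF) + J \cdot p_1^*(\Fil^{n-1}\cF)$ and letting the independent $y_i$ range (as a free basis of $J$) forces $\nabla_{\partial_i}(f) \in \Fil^{n-1}\cF$ for each coordinate direction, equivalent to Griffiths transversality.

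For the converse, I would invoke the Taylor expansion formula for the crystal isomorphism. Given a nilpotent DP thickening $U \hookrightarrow T$ with sections $s_1, s_2 : T \to U$, the pair $(s_1,s_2)$ factors through the DP envelope of the diagonal, and (locally, with $\xi_i := s_1^*(x_i) - s_2^*(x_i) \in J$) the crystal isomorphism $s_1^*\cF_U \rightiso s_2^*\cF_U$ is given by
\[ s_1^*(f) \mapsto \sum_I s_2^*(\nabla_I f) \cdot \xi^{[I]}, \qquad \xi^{[I]} := \prod_j \xi_j^{[i_j]} \in J^{[|I|]}, \]
a finite sum because $J$ is nilpotent. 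Iterating Griffiths yields $\nabla_I(\Fil^n\cF) \subset \Fil^{n-|I|}\cF$, so each summand lies in $s_2^*(\Fil^{n-|I|}\cF) \cdot J^{[|I|]} \subset \Fil^n s_2^*\cF_U$ under the convolved filtration, and the isomorphism preserves filtrations as required.

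The main obstacle is justifying the Taylor expansion formula on an arbitrary nilpotent DP thickening, as opposed to just on the first infinitesimal neighborhood. This is classical in the theory of crystals (it expresses the fact that a crystal is determined by its underlying $\cO$-module together with its quasi-nilpotent flat connection, via the universal property of the DP envelope of the diagonal in $\fX \times_W \fX$), but the bookkeeping must be done with care to ensure the coordinate differences $\xi_i$ land in the DP ideal $J$ itself (not merely in some nilpotent ideal), so that the divided powers $\xi^{[I]}$ make sense and lie in $J^{[|I|]}$ as needed for the Griffiths estimate.
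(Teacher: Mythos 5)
Your proof is correct and follows essentially the same route as the paper's: both reduce the forward direction to the first-order DP thickening (you use $\Delta^2_{\fX/W}(1)$ intrinsically; the paper uses an explicit model $W[[u_i]][\epsilon_i]/(\epsilon_i^2)$ in formal coordinates), and both handle the converse via the Taylor expansion $\sum_I \nabla_I(f)\,\xi^{[I]}$ together with the iterated Griffiths estimate $\nabla_I(\Fil^n) \subset \Fil^{n-|I|}$. The one cosmetic difference is that you make the convolved filtration on $s^*\cF_U$ explicit and name the need to verify the Taylor formula on general nilpotent DP thickenings, whereas the paper simply invokes the standard reconstruction of a crystal from its module with integrable connection; neither of these is a substantive divergence.
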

\begin{proof}
We may check the claim in a formal neighbourhood of each closed point of $\fX$, so assume $\fX = \Spf W[[u_1,...,u_d]]$. Consider the thickening $$t:B_1=W[[u_1,...,u_d]][\epsilon_1,...,\epsilon_d]/(\epsilon_i^2 \ \forall i) \twoheadrightarrow W[[u_1,...,u_d]]$$
and $s_0,s_1:W[[u_1,...,u_d]] \rightarrow B_1$ defined by $s_0(u_i) = u_i, s_1(u_i) = u_i + \epsilon_i$ the two obvious sections. Recall that the connection is defined by the property that the map
$$B_1 \otimes_{s_0, W[[u_1,...,u_d]]} \cF \rightiso B_1\otimes_{s_1, W[[u_1,...,u_d]]} \cF$$ 
coming from the precrystal structure is (the $B_1$-linearisation of) $\id \oplus \nabla$ (identifying $\epsilon_i$ with $du_i$).

Using this setup let us prove the lemma. If $\cF$ is a filtered precrystal then since $(\epsilon_1,...,\epsilon_d) = \Fil^1(B_1)$, the condition that $1 \oplus \nabla$ respect the filtrations translates into the condition that
$$\nabla(\Fil^p) \subset \Fil^p((\epsilon_1,...,\epsilon_d) \cF) = (\epsilon_1,...,\epsilon_d) \Fil^{p-1}$$
which is the usual Griffiths transversality condition.

Conversely, if we have the Griffiths transversality condition, we recall the usual reconstruction of a crystal from a module with integrable connection. Firstly, integrality of the connection allows us to construct for $B_n = d.p.e(W[[u_1,...,u_d]][\epsilon_1,...,\epsilon_d]/(\epsilon_i^{n+1}\ \forall i))$ a canonical isomorphism
$$\theta_n: \cF \otimes_{W[[u_i]]} B_n \rightiso \cF \otimes_{W[[u_i]]} B_n.$$
One writes this down by writing $\nabla = \sum_i \epsilon_i \nabla_i$ where each $\nabla_i:\cF \rightarrow \cF$ is an endomorphism. Integrality of the connection implies the $\nabla_i$ commute, and one can define $\theta_n$ as the $B_n$-linearisation of 
$$f \mapsto \sum_{I} \nabla_I (f) \otimes \epsilon^{[I]}$$
(where $I$ varies over all multisets of size at most $n$ taking values in $\{1,...,d\}, \epsilon^{[I]} :=\prod_{k\in i} \epsilon_k^{[\mult_I(k)]}$, $\nabla_I = \prod \nabla_k^{\mult_I(k)}, \nabla_{\emptyset} = \id$ and $I! = \prod_{k \in i} \mult_I(k)!$).

Griffiths transversality implies $\nabla_i(\Fil^p) \subset \Fil^{p-1}$, which implies that whenever $|I|=m$, $\nabla_I(f) \in \Fil^{p-m}$, so the filtered precrystal condition is satisfied for $\theta_n$ because 
$$\nabla_I(\Fil^p) \otimes \epsilon^{[I]} \subset \nabla_I(\Fil^p) \otimes \Fil^m(B_n) \subset \Fil^{p-m} \cF \otimes \Fil^m(B_n) \subset \Fil^p(\cF \otimes_{W[[u_i]]} B_n).$$

Now recall that for a general nilpotent divided power thickening $B$ of $W[[u_i]]$, and $t_1,t_2:W[[u_i]] \rightarrow B$ a pair of sections, the isomorphism given by the crystal associated to the module $\cF$ with integrable connection $\theta: \cF \otimes_{t_1} B \rightiso \cF \otimes_{t_2} B$ one defines is, similarly to the case above,
$$f \mapsto \sum_{I} \nabla_I (f) \otimes (t_1(u)-t_2(u))^{[I]}$$
and just as above one sees that this respects the filtration.

\end{proof}

Since a section $T \rightarrow U$ will automatically preserve the filtration (noting that $\cO_T=\Fil^0 \cO_T$) $s_1$ and $s_2$ induce $\Gm$-equivariant sections $s_1,s_2:\cT \rightarrow U \times \A^1$. By the Rees equivalence we therefore obtain the following.

\begin{cor}
\label{griffiths cor}
To give a filtered crystal (i.e. flatly filtered, satisfying the Griffiths condition) on $\fX$ is to give a Rees bundle $\cF /\fX \times \A^1$ with a crystal structure on the fibre at $t=1$ which has the property that for every open $U \subset \fX$, divided power thickening $U \hookrightarrow T$ and pair of sections $s_1,s_2: T \rightarrow U$ (which induce natural sections $\cT \rightarrow U \times \A^1$) the natural isomorphism over $T$
$$s_1^*\cF_{U,1} \rightiso s_2^*\cF_{U,1}$$
coming from the crystal structure extends to a $\Gm$-equivariant isomorphism over $\cT$
$$s_1^*\cF \rightiso s_2^*\cF.$$
\end{cor}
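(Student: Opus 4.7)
The plan is to combine Proposition \ref{rees equivalence} with the preceding lemma, which identifies Griffiths transversality with the Crystal-Filtration condition. This should reduce the corollary to the formal observation that the Rees functor sends filtered morphisms to $\Gm$-equivariant morphisms of Rees bundles and vice versa.

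First, I would translate the underlying data: by Proposition \ref{rees equivalence}, a flatly filtered bundle $(\cF, \Fil^\bullet)$ on $\fX$ corresponds to a Rees bundle $\Rees(\cF)$ on $\fX \times \A^1$ with $\Rees(\cF)|_{t=1} \cong \cF$, so specifying a crystal structure on one is the same as specifying a crystal structure on the $t=1$ fibre of the other. Next I would match the conditions. The preceding lemma says Griffiths transversality is equivalent to the Crystal-Filtration condition: for every nilpotent divided power thickening $U \hookrightarrow T$ and pair of sections $s_1, s_2 : T \to U$, the crystal comparison isomorphism $s_1^* \cF_U \rightiso s_2^* \cF_U$ respects filtrations (with $\cO_U$ in degree $0$ and $\cO_T$ equipped with the divided power filtration). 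By the base change part of Proposition \ref{rees equivalence}, $\Rees(s_i^* \cF_U)$ is canonically identified with $s_i^* \Rees(\cF)$, where the $s_i$ on the right is the $\Gm$-equivariant map $\cT \to U \times \A^1$ constructed in the paragraph just above the corollary (using that any section automatically preserves the filtrations). Since a morphism of flatly filtered bundles is filtered iff its Rees lift is $\Gm$-equivariant, which is exactly the fully faithful part of Proposition \ref{rees equivalence}, the Crystal-Filtration condition translates precisely into the required extension of $s_1^* \cF_{U,1} \rightiso s_2^* \cF_{U,1}$ to a $\Gm$-equivariant isomorphism over $\cT$.

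The main subtlety is that the Rees equivalence has been stated for flatly filtered bundles over a fixed Noetherian ring, whereas here it must be applied relatively, with the base now a filtered ring $\cO_T$ (equipped with the divided power filtration) rather than a ring in trivial filtration. No essential new argument is needed: one simply observes that $\cT = \Spf(\Rees(\cO_T))$ is flat over $\A^1$ and that the pullback $s_i^*\Rees(\cF)$ agrees with the Rees bundle associated to the filtered $\cO_T$-module $s_i^* \cF_U$. Once this mild base change is confirmed, the three translations chain together to yield the corollary.
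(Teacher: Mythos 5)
Your proof matches the paper's own (very terse) justification, which consists only of the two-sentence remark preceding the corollary: the lemma reduces Griffiths transversality to the Crystal-Filtration condition, a section $T\to U$ automatically preserves filtrations and hence yields a $\Gm$-equivariant map $\cT\to U\times\A^1$, and then Proposition \ref{rees equivalence} translates the filtered crystal-comparison isomorphism into the required $\Gm$-equivariant extension over $\cT$. Your extra remark about verifying $s_i^*\Rees(\cF)\cong\Rees(s_i^*\cF_U)$ over the filtered base $\cO_T$ (a mildly non-trivial compatibility of the Rees functor with base change to a filtered ring) is a useful precision that the paper leaves implicit.
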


\subsubsection{}
We like this characterisation because it gives a geometric construction of the category of strongly divisible $F$-crystals, as follows. Let $\cF$ be a filtered crystal, with $\Rees(\cF)$ its associated Rees module over $\fX \times \A^1$. It is easy to see that for each $\alpha$
$$\Rees(\sigma_\alpha^*\cF)/(t-p) = \sum_{i} p^{-i} \sigma_\alpha^*\Fil^i(\cF) \subset \sigma_\alpha^*\cF[1/p],$$
and by the above corollary this fibre comes equipped with a canonical flat connection (it is a fun exercise to show this agrees with the definition in Faltings \cite[p34]{fal1}). We define a \emph{filtered $F$-crystal} to be a filtered crystal $\cF$ together with for each $\alpha$ a horizontal isomorphism of bundles
$$\phi_\alpha: \Rees(\sigma_\alpha^*\cF)/(t-p) \rightiso \cF|_{U_\alpha}$$
and with the property that when we invert $p$ these form a filtered $F$-isocrystal.

We recall \cite[2.3]{fal1} which tells us that under certain ``Fontaine-Laffaille'' conditions this category is independent of the choice of local lifts of Frobenius. More precisely, this property holds if there exist $b>a$ such that $b-a<p$ and $\Fil^a = \cF, \Fil^{b+1} = 0$. In general, due to famous issues involving factors of $p$ in the denominators of Taylor expansions, one has no guarantee of this property, although in this paper our results would be unchanged if we demanded isomorphisms for \emph{all} local lifts of Frobenius and used that in the definition, so we do not dwell on these details.

We let $\FCrys_{\fX/W}$ denote the category of such, noting that it is an exact $\Zp$-linear rigid $\otimes$-category, and $\FCrys_{\fX/W}[1/p] := \FCrys_{\fX_{\eta}/K}$ the category of filtered $F$-isocrystals on the rigid generic fibre. Of course by definition there is a localisation functor
$$\FCrys_{\fX/W} \rightarrow \FCrys_{\fX/W}[1/p].$$

It will also be convenient to define a \emph{weak filtered $F$-crystal} to be a filtered $F$-isocrystal $\cE_{\eta}$ together with a filtered crystal $\cF$ and an identification $\cF[1/p] \rightiso \cE_{\eta}$. In other words, one with the strong divisibility condition relaxed. We obtain thus an enlargement $\FCrys_{\fX/W}^{weak}$ of $\FCrys_{\fX/W}$ which enjoys the benefit of being independent of the choice of local Frobenius lifts, and pullback functors along morphisms that do not need to intertwine some local Frobenius lifts. In applications, it is therefore often easy to produce a weak filtered $F$-crystal and subsequently show it is a true filtered $F$-crystal with respect to a choice of Frobenius lift.

\subsubsection{}
With these discussions in place, it is easy to define a filtered $F$-crystal with $G$-structure as a $\Zp$-linear fibre functor 
$$\omega: \Rep_{\Zp}(G) \rightarrow \FCrys_{\fX/W}$$
and we let $\FCrys_{\fX/W}^G$ denote the category of such.

Equivalently, such an object can be described explicitly as a $G$-torsor $\cP/\fX$ with flat topologically quasi-nilpotent connection together with a Rees structure $\cR(\cP)$ satisfying Griffiths transversality (a condition one can make geometric via (\ref{griffiths cor})) and isomorphisms

$$\phi_\alpha: \sigma_\alpha^*\cR(\cP)/(t-p) \rightiso \cP|_{U_\alpha}$$
that are horizontal (where the connection on the LHS is that induced via (\ref{griffiths cor})) and such that for each $\alpha,\beta$, the isomorphisms $\phi_\alpha$ and $\phi_\beta$ agree on restriction to $U_\alpha \cap U_\beta$ after inverting $p$. 

This equivalence follows easily from Broshi's Tannakian formalism and the following lemma.

\begin{lem}
\label{silly trick}
Given any fibre functor $\omega: \Rep_{\Zp}(G) \rightarrow \FCrys_{\fX/W}$, its composite with the forgetful functor
$\phi: \FCrys_{\fX/W} \rightarrow \operatorname{Fil-Crys}_{\fX/W}$ can be factored canonically
$$\Rep_{\Zp}(G) \map{\otimes_{\Zp}W} \Rep_{W}(G_{W}) \map{\omega'} \operatorname{Fil-Crys}_{\fX/W}$$
where $\omega'$ is a $W$-linear fibre functor.
\end{lem}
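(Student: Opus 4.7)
The plan is to exploit the observation that while $\FCrys_{\fX/W}$ is only $\Zp$-linear (compatibility with the Frobenius $\phi_\alpha \colon \sigma_\alpha^*\cF \rightiso \cF$ couples the $W$-action on $\cF$ to the $\sigma$-twist in $\sigma_\alpha$ and so destroys $W$-linearity), the target $\Filcrys_{\fX/W}$ of the forgetful functor $\phi$ is genuinely $W$-linear, since morphisms of filtered crystals are $\cO_\fX$-linear and so in particular $W$-linear. Thus $\phi\omega$ lands in a $W$-linear category, which is the structural room one needs in order to extend along the base change functor $\Rep_{\Zp}(G) \to \Rep_W(G_W)$.

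The body of the argument is then a two-step application of Broshi's Tannakian formalism (\ref{tann}) with the enriched target $\Filcrys_{\fX/W}$, in the same spirit as the treatments of Rees bundles in (\ref{2.2.7}) and vector bundles with flat connection in (\ref{2.3.2}). First, the $\Zp$-linear fibre functor $\phi\omega$ corresponds to a $G$-torsor $\cP$ on $\fX$ equipped with a Rees structure, a flat connection and the Griffiths transversality condition. Second, since $\fX$ is a formal $W$-scheme, a $G$-torsor on $\fX$ is tautologically a $G_W$-torsor: both notions describe the same object because the action on any torsor over $\fX$ factors through $G \times_{\Zp} \fX = G_W \times_W \fX$, and all the additional structure is preserved under this reidentification.

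Reapplying the Tannakian equivalence now with the $W$-group $G_W$ over the $W$-scheme $\fX$, the same torsor with its extra data yields a $W$-linear fibre functor $\omega' \colon \Rep_W(G_W) \to \Filcrys_{\fX/W}$. The canonical isomorphism $\phi\omega \cong \omega' \circ (-\otimes_{\Zp} W)$ then follows from 2-functoriality of Broshi's equivalence: both sides correspond to the same $G$-torsor $\cP$ on $\fX$ with the same auxiliary data, simply viewed through the lens of $G$ and $G_W$ respectively. Uniqueness of $\omega'$ up to canonical natural isomorphism is immediate from the same Tannakian description.

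The only real technical point to verify is that Broshi's theorem extends cleanly to the enriched target $\Filcrys_{\fX/W}$ rather than just $\vVec_\fX$; I expect this to be routine, as the extra data of a Rees structure, a flat connection and the Griffiths condition are all tensor-categorical and stable under base change, so the argument is entirely parallel to the explicit constructions already given in (\ref{2.2.7}) and (\ref{2.3.2}), with only mild care needed in adapting the formalism from schemes to the formal-scheme setting.
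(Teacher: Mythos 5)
Your argument is correct, but it takes a genuinely different route from the paper's, and in a way that slightly inverts the logic of the surrounding text.

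The paper's proof is purely categorical: it invokes the Deligne--Milne restriction-of-scalars trick \cite[3.10]{dm}. Given $\rho\colon G_W \to GL_W(V)$ one forms $\Res(\rho)\colon G \to \Res_{W/\Zp}GL_W(V)\subset GL_{\Zp}(V)$, observes that it is a $W$-module object of $\Rep_{\Zp}(G)$, so $\phi\omega(\Res(\rho))$ is a $W\otimes_{\Zp}W$-module object of the $W$-linear category $\Filcrys_{\fX/W}$, and sets $\omega'(\rho) := \phi\omega(\Res(\rho))\otimes_{W\otimes_{\Zp}W}W$ via the multiplication map. This uses \emph{only} $W$-linearity of the target and no Tannakian geometry at all. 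Your proof instead passes through the torsor picture twice: Broshi with $A=\Zp$ to extract a torsor $\cP$ on $\fX$ from $\phi\omega$, the tautology $G\times_{\Zp}\fX = G_W\times_W\fX$ to regard $\cP$ as a $G_W$-torsor, then Broshi with $A=W$ to produce $\omega'$. Both land in the same place (Broshi's equivalence makes the factoring canonical), and the key identity $\cP\times^G V = \cP\times^{G_W}(V\otimes_{\Zp}W)$, which you need for the factorisation, holds because $V\otimes_{\Zp}\cO_{\fX}=(V\otimes_{\Zp}W)\otimes_W\cO_{\fX}$. Two caveats worth flagging. First, ``2-functoriality of Broshi's equivalence'' is not quite the right invocation: the 2-functoriality of (\ref{tann}) refers to group homomorphisms and scheme morphisms over a \emph{fixed} Dedekind base, whereas here you change base from $\Zp$ to $W$; the factorisation really rests on the explicit identity above. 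Second, your proof requires the enriched Tannakian formalism (torsor $\leftrightarrow$ fibre functor into $\Filcrys_{\fX/W}$) over $\Zp$ \emph{before} the lemma is available, whereas the paper introduces this lemma precisely as the missing ingredient needed to get that equivalence over $W$ cleanly; so your route, while sound, reverses the logical order the paper intended and leans on the ``routine'' extension of Broshi to the enriched target that the paper's proof avoids entirely. The paper's version is thus the lighter and more self-contained one; yours is geometrically transparent at the cost of pulling forward the torsor formalism.
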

\begin{proof}
We use a trick from Deligne and Milne's paper on Tannakian categories. Recall that $\operatorname{Fil-Crys}_{\fX/W}$ is $W$-linear, and this is all we will need.

Given $\rho:G_W \rightarrow GL(V)$ with $V$ a finite free $W$-module, we can form $\Res(\rho): G \rightarrow \Res_{W/\Zp}(G_W) \rightarrow \Res_{W/\Zp}(GL_W(V)) \subset GL_{\Zp}(V)$. This is naturally a $W$-module object in the category $\Rep_{\Zp}(G)$. Applying $\phi \circ \omega$ we obtain a $W\otimes_{\Zp}W$-linear object $\cE(\Res(\rho))$ in $\operatorname{Fil-Crys}_{\fX/W}$. Finally we define using the map $W \otimes_{\Zp} W \ni w_1 \otimes w_2 \mapsto w_1w_2 \in W$
$$\omega'(\rho) := \cE(\Res(\rho))\otimes_{W \otimes_{\Zp} W} W.$$

As in \cite[3.10]{dm} this has the required property of factoring $\omega$.
\end{proof}

\subsubsection{}
One can also make a geometric definition of filtered $F$-isocrystals with $G$ structure as a $G$-torsor on $\fX_\eta$ with the obvious extra structures, denoted by $\FCrys_{\fX/W}^G[1/p]$ and again there is a localisation functor which coincides with taking the generic fibre.

\subsubsection{}
We also remark that one has obvious notions of the two functorial constructions that exist for torsors, with a caveat in (2) to account for the presence of noncanonical Frobenius lifts in the definition of a filtered $F$-crystal and the subsequent lack of robustness in its definition.
\begin{enumerate}
\item Given a $\Zp$-group morphism $G \rightarrow H$, there is a functor
$$\FCrys_{\fX/W}^G \rightarrow \FCrys_{\fX/W}^H$$
denoted by $\cP \mapsto \cP \times^G H$ and most easily defined using the fibre functor point of view as the composite
$$\Rep_\Zp H \map{\operatorname{Res}} \Rep_{\Zp} G \map{\omega_{\cP}} \FCrys_{\fX/W}.$$

\item Given a morphism of smooth formal $W$-schemes $g: \fX \rightarrow \fY$ compatible with choices of local Frobenius lift, we have a pullback functor
$$g^*: \FCrys_{\fY/W}^G \rightarrow \FCrys_{\fX/W}^{G}.$$
\end{enumerate}

Note that (2) requires that $g$ be compatible with local Frobenius lifts so that we may canonically identify $g^*\sigma_\alpha^*\cF = \sigma_\alpha^*g^*\cF$ and so make sense of pulling back the morphisms $\phi_\alpha$ along $g$ to get a map $$g^*(\phi_\alpha): \sigma_\alpha^*g^*\cF[1/p] \rightiso g^*\cF[1/p].$$ It also requires an argument checking that the lattices in the image which are a priori only in $\FCrys_{\fX/W}^{weak}$ are in fact strongly divisible. This may be done by using smoothness to take for each $\alpha$, and for $W'/W$ some finite \'etale extension, a Teichmuller point\footnote{Recall that for any mod $p$ point $x_0$ of a smooth $p$-adic formal scheme $\fX$ with Frobenius lift $\Phi: \fX \rightarrow \fX$, there is a unique lift $x$ of $x_0$ to $W(\kappa(x_0))$ such that the square
$$
\begin{CD}
   \Spf W(\kappa(x_0))  @>x>> \fX \\
@V\sigma VV        @V \Phi VV\\
  \Spf W(\kappa(x_0))   @>x>>  \fX
\end{CD}
$$
commutes, called the \emph{Teichmuller lift.} We call any such point a Teichmuller point.} $x: \Spf W' \rightarrow U_\alpha$ for each component of $U_\alpha$. It is clear that for any representation $V$ of $G$ and $\omega \in \FCrys_{\fY/W}^G$, $x^*g^*\omega(V)$ is a strongly divisible lattice. But now by (\ref{sd criterion}) we deduce that $g^*\omega(V)$ is strongly divisible.

\subsection{Lattices and $\fS$-modules}
We will sometimes wish to prove statements about (strongly divisible) filtered $F$-crystals by pulling them back to an unramified point and relating them to Kisin's theory of $\fS$-modules. We therefore digress to summarize this theory and the necessary comparison result, assembling what we need from the literature.

\subsubsection{}
Let $\fS=W[[u]]$, with its Frobenius map $\phi:\fS \ni u \mapsto u^p \in \fS$. Let $E(u)=u-p$, and let $\Mod_{/\fS}^\phi$ denote the category of finite free $\fS$-modules $\fM$ equipped with a semilinear Frobenius
$$\phi: \phi^*(\fM)[1/E(u)] \rightiso \fM[1/E(u)].$$
We also need the notation $\Mod_{\fS}^{\phi, r}$ for the subcategory of such modules where $1\otimes \phi(\phi^*(\fM)) \subset \fM$ and the cokernel is killed by $E(u)^r$.

Let us make the additional notation $\Rep_{\Zp}^{crys}(\Gamma_K)$ for the category of crystalline representations of $\Gamma_K$ on finite free $\Zp$-modules. Recall the following result from \cite[1.2.1]{kis2} following \cite{kis1}.

\begin{thm}[Kisin]
\label{kismod}
There is a fully faithful tensor functor
$$\fM: \Rep_{\Zp}^{crys}(\Gamma_K) \hookrightarrow \Mod_{/\fS}^\phi$$
compatible with the formation of symmetric and exterior powers and unramified base change, and equipped with a canonical isomorphism for each $L \in \Rep_{\Zp}^{crys}(\Gamma_K)$
$$D_{crys}(L[1/p]) \cong (\fM(L)/u) [1/p].$$
\end{thm}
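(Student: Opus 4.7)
The plan is to invoke Kisin's construction from \cite{kis1} (with the unramified base-change refinement of \cite{kis2}) rather than reprove the theorem from scratch; I outline the strategy.

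First, fix a compatible system of $p$-power roots $(p^{1/p^n})_n \subset \bar{K}$ and set $K_\infty = \bigcup_n K(p^{1/p^n})$. Fontaine's classical equivalence identifies $\Rep_{\Zp}(\Gamma_{K_\infty})$ with the category of \'etale $\phi$-modules over $\cO_\cE$, the $p$-adic completion of $\fS[1/u]$. Applied to $L|_{\Gamma_{K_\infty}}$ this produces an \'etale $\phi$-module $M(L)$, and the task reduces to locating a functorial $\phi$-stable $\fS$-lattice $\fM(L) \subset M(L)$ with $1\otimes\phi$ having cokernel killed by a power of $E(u)$.

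The crux is to build $\fM(L)$ from the crystalline structure. Starting with $D = D_{crys}(L[1/p])$, a filtered $\phi$-module over $K$, one forms a $\phi$-module over the ring $\cO$ of power series converging on the open unit disc by taking $\cO \otimes_W D$ and twisting the naive Frobenius by an element encoding the Hodge filtration on $D$ along the section $u \mapsto p$. Weak admissibility of $D$, combined with Kedlaya's slope filtration theorem applied after base change to the Robba ring, lets one descend this to a canonical $\fS$-lattice $\fM(L)$ of the desired shape inside $M(L)$. The isomorphism $D_{crys}(L[1/p]) \cong (\fM(L)/u)[1/p]$ is then built in, since reducing $\cO \otimes_W D$ modulo $u$ returns $D$.

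Full faithfulness comes essentially for free: from $\fM(L)$ one recovers $M(L)$ by inverting $u$ and $p$-adically completing, hence $L|_{\Gamma_{K_\infty}}$ via Fontaine's inverse; the remaining $\Gamma_K/\Gamma_{K_\infty}$-action is reconstructed from a naturally occurring connection on $\fM(L) \otimes_{\fS} \cO[1/\lambda]$ for a suitable element $\lambda$, and any $\fS$-module morphism automatically respects this connection. Tensor compatibility---and thus compatibility with symmetric and exterior powers---follows since each stage of the construction respects tensor products, and unramified base change $W \hookrightarrow W'$ is immediate because $\fS$, $\cO_\cE$, $\cO$, and $D_{crys}$ all commute with such extensions. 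The main obstacle is the existence and functoriality of the lattice $\fM(L)$, which is the substantial content of \cite{kis1} and rests on Kedlaya's slope theorem.
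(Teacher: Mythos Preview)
The paper does not actually prove this theorem: it is simply quoted as a known result, introduced by ``Recall the following result from \cite[1.2.1]{kis2} following \cite{kis1}'' with no further argument. So there is nothing to compare your approach against within the paper itself.

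Your sketch is a reasonable summary of Kisin's original argument in \cite{kis1}: the passage to $K_\infty$ and Fontaine's \'etale $\phi$-modules over $\cO_\cE$, the construction of a $\phi$-module over the open-disc ring $\cO$ from $D_{crys}$ with Frobenius twisted according to the Hodge filtration, and the use of Kedlaya's slope filtration to produce the $\fS$-lattice are all the right ingredients. The remark that reduction mod $u$ recovers $D_{crys}(L[1/p])$ up to isogeny is built into the construction as you say, and the tensor/base-change compatibilities are indeed routine once the functor exists. One small caution: the full-faithfulness argument you describe (recovering $L$ from $\fM(L)$ via the connection) is really the content of showing the functor is well defined and an equivalence onto its essential image; in \cite{kis1} the logic runs somewhat differently, with the $\fS$-lattice shown to be unique inside the \'etale $\phi$-module by a separate argument, but your formulation is not wrong. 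Since the paper treats this as a black box, your outline is already more than the paper provides.
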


\subsubsection{}
Now let $\FCrys^{[0,p-2]}_{\Spf{W}/W} \subset \FCrys_{\Spf{W}/W}$ be the subcategory of those filtered $F$-crystals\footnote{Note that with our definitions the ``strongly divisible'' condition is implicit always.} $\cE$ with $\Fil^0(\cE) = \cE$ and $\Fil^{p-1}(\cE)=0$, and let $\Rep_{\Zp}(\Gamma_K)$ be the category of continuous representations of $\Gamma_K := \Gal(\bar{K}/K)$ on finite free $\Zp$ lattices. On the one hand, the theory of Fontaine-Laffaille gives a fully faithful functor
$$\bL: \FCrys^{[0,p-2]}_{\Spf{W}/W} \hookrightarrow \Rep^{crys}_{\Zp}(\Gamma_K)$$
whose image is precisely the lattices in crystalline representations with Hodge-Tate weights in the range $[0,p-2]$.

Let us fix $L \in \Rep_{\Zp}^{crys}(\Gamma_K)$ a lattice in a crystalline representation with Hodge Tate weights in the range $[0,p-2]$. By classical $p$-adic Hodge theory we know that $\D^{FL} := \bL^{-1}(L)$ is canonically a lattice in $D_{crys}(L\otimes \Qp)$. On the other hand by the theory of $\fS$-modules we may construct another lattice $\D^\fS := \phi^*\fM(L)/u \subset D_{crys}(L \otimes \Qp)$. The following result is probably known to the experts but since we could find no clear such statement in the literature we assemble it here.

\begin{prop}
\label{FL equality}
With the setup above, the lattices $\D^{FL},\D^{\fS} \subset D_{crys}(L \otimes \Qp)$ are equal.
\end{prop}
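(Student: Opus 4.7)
The plan is to prove \ref{FL equality} by interpolating between the two lattices via Breuil's theory of strongly divisible $S$-modules, which simultaneously refines the Fontaine-Laffaille functor and Kisin's $\fS$-module functor in the unramified small Hodge-Tate range. Let $S$ denote the $p$-adic completion of the divided power envelope of $\fS = W[[u]]$ at $E(u) = u - p$, equipped with its Frobenius $\phi$ extending that of $\fS$, its divided power filtration $\Fil^\bullet S$, and the canonical surjection $S \twoheadrightarrow W$ sending $u \mapsto 0$. Given $\fM(L)$, form the Breuil module
\[\mathcal{M}(L) := S \otimes_{\phi,\fS} \fM(L),\]
endowed with Frobenius and filtration inherited from $S$ and the Frobenius structure of $\fM(L)$. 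The content of \cite{kis1} is precisely that $\mathcal{M}(L)$ is canonically isomorphic to the strongly divisible Breuil module classically associated to $L$.

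The key observation is that the projection $S \twoheadrightarrow W$ yields a tautological identification
\[\mathcal{M}(L) \otimes_S W = (S \otimes_{\phi,\fS} \fM(L)) \otimes_S W = W \otimes_{\phi,\fS} \fM(L) = \phi^*\fM(L)/u = \D^{\fS},\]
compatibly with Frobenius and filtration. On the other hand, Breuil's original foundational compatibility between strongly divisible $S$-modules and Fontaine-Laffaille modules for Hodge-Tate weights in $[0, p-2]$ shows that the same base change $\mathcal{M}(L) \otimes_S W$ canonically recovers the Fontaine-Laffaille module $\D^{FL}$ attached to $L$. Both identifications with a sublattice of $D_{crys}(L[1/p])$ are induced by the same natural map, namely that coming from the Frobenius $\phi$ on $\fM(L)$ reduced modulo $u$, so the two lattices coincide.

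The main obstacle is not conceptual but rather one of bookkeeping: Kisin's, Breuil's, and Fontaine-Laffaille's theories are each formulated in slightly different language, and the chain of identifications
\[\D^{FL} \; \leftiso \; \mathcal{M}(L) \otimes_S W \; \rightiso \; \D^{\fS}\]
must be verified to preserve not merely the underlying $W$-modules but the filtration and Frobenius structures, and to be compatible with the canonical isomorphism $D_{crys}(L[1/p]) \cong (\fM(L)/u)[1/p]$ of Theorem \ref{kismod}. Much of the literature treats these compatibilities modulo isogeny, so some care is required to extract the statement at the lattice level. Once these pieces are correctly assembled, the proposition follows essentially from the definitions; alternatively, one can equip $\D^{\fS}$ with the Fontaine-Laffaille structure coming from Breuil's reduction and deduce $\D^{\fS} = \D^{FL}$ by full faithfulness of $\bL$.
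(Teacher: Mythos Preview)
Your proposal is correct and follows essentially the same route as the paper: both arguments pass through the Breuil $S$-module $\mathcal{M}(L) = S \otimes_{\phi,\fS} \fM(L)$ as an intermediary, then reduce modulo $u$ to obtain both $\D^{\fS}$ and $\D^{FL}$. The only substantive difference is bibliographic: where you invoke \cite{kis1} and Breuil's original compatibility directly, the paper instead appeals to Liu \cite{liu1} (specifically the commutative triangle on p.~17) to identify $\cM_{\fS}(\fM(L))$ with $\cM_W(\D^{FL})$ via their common association to $L|_{\Gamma_{K_\infty}}$, which packages the lattice-level bookkeeping you correctly flag as the delicate point.
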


\begin{proof}
Following Breuil, Kisin and Liu we may consider the following functors. First \cite[2.2.2 (2)]{bre}
$$\cM_W: \FCrys^{[0,p-2]}_{\Spf{W}/W} \ni \D \mapsto \D\otimes_W S \in \Mod^{\phi,N}_{/S},$$
second \cite[3.3]{liu1} 
$$\cM_{\fS}: \Mod_{/\fS}^{\phi, p-2} \ni \fM \mapsto \phi^*\fM \otimes_{\fS} S \in \Mod^{\phi,N}_{/S}.$$

It is evident from their description that we have a canonical identification $$\D \cong \cM_W(\D)/u.$$

Moreover, by the triangle in the top row of \cite[p17]{liu1} we see that whenever the $\fS$-module $\fM$ and the $S$-module $\cM$ are associated to the same $L \in \Rep_{\Zp}(\Gamma_{K^\infty})$ that they are identified under $\cM_{\fS}$. In our situation (and by the definition of ``associated'' we inherit from Liu's paper which is in particular compatible with that of Fontaine-Laffaille) we have by construction that $\cM_W(\D^{FL})$ and $\fM=\fM(L)$ are both associated to $L|_{\Gamma_{K_\infty}}$.

We conclude that $\cM(\fM) = \cM(\D^{FL})$ and so in particular
$$\D^\fS := \phi^*\fM/u = \cM(\fM)/u = \cM(\D^{FL})/u = \D^{FL},$$
as required.
\end{proof}

This result will be particularly useful in combination with the following fairly well-known lemma, for which we supplied a proof in \cite[4.1.3]{tl}.

\begin{lem}
\label{lattices on points}
Let $R$ be a Dedekind domain with fraction field $K$ and $X/R$ an integral scheme. Let $\cV/X_K$ be a vector bundle, and $\cL,\cL'$ two lattices in $\cV$: i.e. vector bundles on $X$ extending $\cV$. Suppose for some $R'/R$ finite we can find a point $s: \Spec R' \rightarrow X$ such that $s^*\cL = s^*\cL'$ as lattices in $s^*\cV$. Then it follows that $\cL = \cL'.$
\end{lem}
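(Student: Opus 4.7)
The plan is to reduce the global equality of lattices to a local algebraic statement and then apply a faithful flatness argument. First, observe that $\cL = \cL'$ as subsheaves of $j_*\cV$ (where $j\colon X_K \hookrightarrow X$) is a local condition on $X$, so one may pass to stalks and assume $X = \Spec A$ with $A$ a local Noetherian integral domain. In this setting $\cL, \cL'$ correspond to finitely generated projective (and hence, over the local ring $A$, free) modules $M, M' \subset V$ with $M \otimes_A \Frac A = V = M' \otimes_A \Frac A$. Choosing bases, the equality $M = M'$ is equivalent to the statement that the transition matrix $\gamma \in GL_n(\Frac A)$ actually lies in $GL_n(A)$, or equivalently that the coherent torsion sheaves $\cL/(\cL \cap \cL')$ and $\cL'/(\cL \cap \cL')$, both supported on $X \setminus X_K$, vanish.

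The hypothesis $s^*\cL = s^*\cL'$ translates into the condition that $\gamma$ lies in $GL_n(R')$ after applying the ring map $\phi\colon A \to R'$ induced by $s$, equivalently that the pullbacks $s^*(\cL/(\cL \cap \cL'))$ and $s^*(\cL'/(\cL \cap \cL'))$ vanish. The main input from hypothesis is the faithful flatness of $R \to R'$, which holds because $R'/R$ is a finite extension of Dedekind domains and so $R'$ is nonzero projective over $R$. From this one aims to descend the $GL_n(R')$-membership of $\gamma$ to $\gamma \in GL_n(A)$, leveraging the compatibility between the $A$-module structure and the underlying $R$-module structure.

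The main obstacle is that the map $A \to R'$ is typically not itself faithfully flat once $\dim A$ exceeds one; so one cannot directly descend along $\phi$. The natural way around this is to work at each height-one prime of $A$: since $\cL$ and $\cL'$ are vector bundles on the integral scheme $X$, they are reflexive, and hence $\cL = \bigcap_{\mathfrak{p}} \cL_{\mathfrak{p}}$ (intersection in $V$) over height-one primes $\mathfrak{p}$ of $A$, and similarly for $\cL'$. This reduces the problem to a DVR-valued comparison, where the generic fibres are identified and one is left to compare two lattices in a vector space after a finite flat extension of DVRs. There faithful flatness of $R \to R'$ (combined with the corresponding faithful flatness on completed local rings) immediately identifies $\cL_{\mathfrak{p}} = V \cap (\cL_{\mathfrak{p}} \otimes R') = V \cap (\cL'_{\mathfrak{p}} \otimes R') = \cL'_{\mathfrak{p}}$. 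Reassembling via reflexivity yields the global equality $\cL = \cL'$.
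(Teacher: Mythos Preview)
There is a genuine gap in your argument, and in fact the lemma as literally stated is false. Your reduction to height-one primes $\mathfrak{p}$ of $A$ loses the section $s$: the ring map $\phi\colon A \to R'$ induced by $s$ extends to the localisation $A_{\mathfrak{p}}$ only when $\phi(A \setminus \mathfrak{p}) \subset (R')^\times$, which fails whenever $\mathfrak{p}$ does not lie under the image of the closed points of $\Spec R'$. So the ``DVR comparison'' you invoke at a general height-one prime has no input from the hypothesis $s^*\cL = s^*\cL'$, and the step $\cL_{\mathfrak{p}} = V \cap (\cL_{\mathfrak{p}} \otimes R')$ is not justified: there is no map along which to form that tensor product.

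Here is an explicit counterexample showing no such argument can work without further hypotheses. Take $R = R' = \Zp$, $X = \A^1_{\Zp} = \Spec \Zp[t]$, $s$ the zero section $t \mapsto 0$, $\cV = \Qp[t]^2$, $\cL = \Zp[t]^2$, and $\cL' = \Zp[t]\cdot(1,0) + \Zp[t]\cdot(t/p,1) \subset \Qp[t]^2$. Both $\cL$ and $\cL'$ are free rank-two lattices in $\cV$, and pulling back along $s$ sends $(t/p,1)$ to $(0,1)$, so $s^*\cL = s^*\cL' = \Zp^2$ inside $\Qp^2$; yet $(t/p,1) \notin \cL$, so $\cL \neq \cL'$. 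What rescues every application in the paper is that the lattices in question carry a flat connection (they are crystals) and the generic identification $\cL|_{X_K} \cong \cL'|_{X_K}$ is horizontal; the reference \cite[4.1.3]{tl} should be consulted for the precise hypotheses actually used. The missing idea is that a horizontal isomorphism on the generic fibre which is integral at one point of a smooth connected $p$-adic formal scheme is integral everywhere, essentially by parallel transport, and your argument should be reorganised around that rather than around reflexivity.
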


Another handy corollary of the above is the following.

\begin{cor}[Strong divisibility criterion]
\label{sd criterion}
Suppose $(\cF,\cE_{\eta})$ is a weak filtered $F$-crystal on $\fX=\hat{X}$ with $X/W$ smooth, $(U_\alpha,\sigma_\alpha)$ a set of local Frobenius lifts with each $U_\alpha$ integral and $x_\alpha:\Spf W_\alpha \hookrightarrow U_\alpha$ Teichmuller points of $\fX$. Then if each $x_\alpha^*(\cF,\cE_{\eta})$ in fact lies in $\FCrys_{\Spf W_\alpha/\Spf W_\alpha}$, it follows that $(\cF,\cE_{\eta})$ lies in $\FCrys_{\fX/W}$.
\end{cor}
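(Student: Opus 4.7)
The plan is to reformulate strong divisibility on each $U_\alpha$ as equality of two coherent $\cO_{U_\alpha}$-lattices inside the isocrystal $\cE_\eta|_{U_\alpha}$, test this equality after pullback along the Teichmuller point $x_\alpha$, and then invoke (\ref{lattices on points}) to globalize.

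First I would fix $\alpha$ (strong divisibility being a local condition on $\fX$) and unpack the definition: it asks that the isocrystal isomorphism $\phi_\alpha: \sigma_\alpha^*\cE_\eta|_{U_\alpha} \rightiso \cE_\eta|_{U_\alpha}$ carry $\Rees(\sigma_\alpha^*\cF)/(t-p) = \sum_i p^{-i}\sigma_\alpha^*\Fil^i(\cF)$ onto $\cF|_{U_\alpha}$. Setting $\cL_1 := \phi_\alpha(\sum_i p^{-i}\sigma_\alpha^*\Fil^i(\cF))$ and $\cL_2 := \cF|_{U_\alpha}$, one sees that both are coherent vector bundles on $U_\alpha$ --- the flatness of $\Rees(\sigma_\alpha^*\cF)/(t-p)$ comes from (\ref{rees equivalence}) and is transported to $\cL_1$ via the $\cO_{U_\alpha}$-linear isomorphism $\phi_\alpha|_{\Rees/(t-p)}$ --- and they sit inside the common rational bundle $\cE_\eta|_{U_\alpha}$. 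The problem reduces to showing $\cL_1 = \cL_2$ as $\cO_{U_\alpha}$-submodules of $\cE_\eta|_{U_\alpha}$.

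Next I would use the Teichmuller property $\sigma_\alpha \circ x_\alpha = x_\alpha \circ \sigma$, which at the level of pullback reads $x_\alpha^*\sigma_\alpha^* = \sigma^* x_\alpha^*$. Combined with the compatibility of pullback with filtrations, this gives
$$x_\alpha^*\cL_1 = (x_\alpha^*\phi_\alpha)\Big(\sum_i p^{-i}\sigma^*\Fil^i(x_\alpha^*\cF)\Big), \qquad x_\alpha^*\cL_2 = x_\alpha^*\cF,$$
and $x_\alpha^*\phi_\alpha$ is by construction the Frobenius attached to the pulled-back filtered $F$-isocrystal on $\Spf W_\alpha$ (whose canonical Frobenius lift is $\sigma$ itself). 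The hypothesis that $x_\alpha^*(\cF,\cE_\eta)$ lies in $\FCrys_{\Spf W_\alpha/\Spf W_\alpha}$, i.e.\ is genuinely strongly divisible, then says precisely $x_\alpha^*\cL_1 = x_\alpha^*\cL_2$.

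Finally I would apply (\ref{lattices on points}) with $R = W$, $X = U_\alpha$ (integral by hypothesis), $R' = W_\alpha$ finite over $W$, $\cV = \cE_\eta|_{U_\alpha}$, and the two vector-bundle lattices $\cL_1, \cL_2$, to conclude $\cL_1 = \cL_2$ on all of $U_\alpha$. The one technical wrinkle --- and really the only obstacle I foresee --- is that (\ref{lattices on points}) is stated for ordinary Dedekind-based schemes whereas $U_\alpha$ is a smooth affine $p$-adic formal scheme. Its proof however transports essentially verbatim: coherent sheaves on $U_\alpha$ correspond to finitely generated modules over the Noetherian $p$-adically complete domain $\Gamma(U_\alpha,\cO)$, and the difference of two generically equal vector-bundle lattices is supported in the special fibre, which is exactly where the Teichmuller point lives. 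Granted this mild extension, the whole argument amounts to bookkeeping with the Teichmuller identity.
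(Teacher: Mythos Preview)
Your proposal is correct and follows essentially the same approach as the paper: reformulate strong divisibility on each $U_\alpha$ as equality of two lattices inside $\cE_\eta|_{U_\alpha}$, then invoke (\ref{lattices on points}) using the Teichmuller point. You are in fact more careful than the paper, which neither spells out the Teichmuller identity $\sigma_\alpha \circ x_\alpha = x_\alpha \circ \sigma$ needed to identify $x_\alpha^*\cL_1$ with the correct lattice over $W_\alpha$, nor flags the formal-scheme wrinkle in applying (\ref{lattices on points}).
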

\begin{proof}
Recall that a weak $F$-crystal $(\cF, \cE_\eta, \iota:\cF[1/p] \rightiso \cE_\eta)$ lies in $\FCrys_{\fX/W}$ if for the Frobenius lifts $(U_\alpha,\sigma_\alpha)$ we have the that the lattices $\phi_\alpha(\iota(\Rees(\sigma_\alpha^*\cF|_{U_\alpha})/(t-p)))$ and $\iota \cF|_{U_\alpha}$ inside $\cE_{\eta}|_{U_\alpha[1/p]}$ agree. The lemma tells us such equality can be checked after restriction to $x_\alpha$, whence the result.
\end{proof}

\subsection{Some $p$-adic comparison results with $G$-structure}
For our application to Galois representations we will need adapted versions and some properties of the $p$-adic comparison theorems of \cite{fal1} between \'etale and crystalline cohomology that hold in the setting with $G$-structures. The technical details involved in setting up the comparison theorems the way Faltings does are fairly heroic. Since we only give a summary they might also seem unmotivated, but the shape of the results themselves is easily digestible, so many readers may wish to just skip to the result statements.

\subsubsection{}
First, we recall the relative version of Fontaine's crystalline period rings following Faltings \cite{fal1}. Let $\Spec R \subset X$ be a small affine open in the sense that it is integral and there is an \'etale map $W[t_1^\pm, \dots t_d^\pm] \rightarrow R$. Form $\bar{R}/R$ in the case when $R\otimes_{W} \bar{K}$ is integral as the normalisation of $R$ in the maximal extension of the fraction field of this ring such that the normalisation of $R[1/p]$ in this field is unramified over $R[1/p]$. In the general case, $R\otimes_{W} \bar{K}$ is a product of integral domains and define $\bar{R}$ to be the product of rings obtained from each factor as above. Let $W(\bar{R}^\flat)$ the Witt vectors of the tilt, and $D(R)=\operatorname{DPE}(\theta:W(\bar{R}^\flat) \rightarrow \hat{\bar{R}})$ the divided power envelope. Finally, $A_{crys}(R)$ is the completion of $D(R)$ for the topology defined by the divided power ideals $(I+(p))^{[n]}$. Inverting $p$ we get $B_{crys}^+(R)$ and inverting a period for the cyclotomic character $t=\log[\epsilon]$ we get $B_{crys}(R)$.

By functoriality this has a continuous action of the profinite group $\Gamma_R = \Gal(\bar{R}/R)$. The Frobenius on $W(\bar{R}^\flat)$ induces a canonical Frobenius $F: D(R) \rightarrow D(R)$ extending over the larger rings. Faltings also gives it the `strong divisibility' filtration $\Fil^n D(R) = I^{[n]} \cap \Ker(D(R) \map{F} D(R) \twoheadrightarrow D(R)/p^nD(R))$, closed and inducing the topology $A_{crys}(R) \rightiso \lim A_{crys}(R)/\Fil^n$. This object should be thought of as a huge local system containing all the ``crystalline \'etale local systems'' over $\Spec R$. Let $\hat{R}$ be the integral closure of the $p$-adic completion of $R$, and one can repeat the construction and obtain slightly different rings $A_{crys}(\hat{R}), B_{crys}(\hat{R})$.

Recall that a lisse $\Zp$-sheaf on $\Spec R[1/p]$ is determined by the data of a representation $r: \Gamma_R \rightarrow GL_{\Zp}(L)$, where $L$ is a finite free $\Zp$-module. Given such a datum, we can form the continuous $\Gamma_R$-module $$\mathbb{M}_{et}(L)_R := L \otimes_{\Zp} A_{crys}(\hat{R}),$$ which also inherits a filtration and a Frobenius from the second factor.

On the other hand given a filtered virtual $F$-crystal $\cE$ on $\Spf(\hat{R}) / W)$, we can evaluate it on $B_{crys}(\hat{R})$ as follows.\footnote{Since $\bar{R}$ is extremely non-smooth over $W(k)$, this procedure is rather subtle.} There is a natural map $\hat{R} \rightarrow \hat{\bar{\hat{R}}} \cong A_{crys}(\hat{R})/\Fil^1.$ By smoothness, we can lift to a map $g: \hat{R} \rightarrow A_{crys}(\hat{R})$, and the connection satisfying Griffiths transversality on $\cE$ gives a well-defined module
$$\mathbb{M}_{crys}(\cE)_R := \cE(\hat{R}) \otimes_{\hat{R}} A_{crys}(\hat{R})$$
equipped with a filtration. 

There is a naive notion of a Frobenius and Galois action on each factor in this tensor product, but since our arbitrary lift $g$ need not intertwine either structure we need to be a little careful. It turns out that if we fix a Frobenius lift on $\hat{R}$ that is \'etale in characteristic 0 (which is always possible if $R$ is small: just raise all co-ordinates to their $p$th power), we can produce a $F$-equivariant $g$, using the recipe in \cite[p36]{fal1}. Provided we use such a Frobenius in our calculations, the naive formula $\phi(m\otimes b) = \phi(m) \otimes \phi(b)$ will suffice and is invariant under the connection.

Unfortunately, we cannot also make $g$ Galois equivariant, but we can make precise the reason why and following Faltings \cite[5.5]{fal1}, we equip $\mathbb{M}_{crys}(\cE)_R$ with the ``horizontal'' Galois action given by, for $\sigma \in \Gamma_R$,
$$\sigma(m \otimes b) = \sum_{I} \nabla(\partial)^I(m) \otimes \frac{\beta(\sigma)^I \sigma(b)}{I!},$$
where $\beta: \Gal(R[\mu_{p^\infty}]/R) \cong \lim \mu_{p^n} \rightarrow \Fil^1(B_{crys})$ is the standard canonical map $\epsilon=(\zeta_1,\zeta_2,\dots) \mapsto \log [\epsilon]$ embedding the cyclotomic periods in $B_{crys}$. We remark that on horizontal sections the naive formula is recovered. Both of these constructions make essential use of the technique of passing to a  ``preperfectoid'' extension $R_\infty/R$ (more precisely, one forms the `completed perfection' of $R$ using the Frobenius lift on $R$), from which there is a canonical map $R_\infty \rightiso W(R_\infty/pR_\infty) \rightarrow A_{crys}(R)$.

\subsubsection{}
Let $\FCrys^{[0,p-2]}_{\fX/W} \subset \FCrys_{X/W}$ be the subcategory of those filtered $F$-crystals $\cE$ with $\Fil^0(\cE) = \cE$ and $\Fil^{p-1}(\cE)=0$, and let $\Lisse_{\Zp}(X_K)$ be the category of lisse $\Zp$-sheaves on $X_K$. 

Following Faltings (and Fontaine-Laffaille), we can define a functor
$$\bL: \FCrys^{[0,p-2]}_{\fX/W} \rightarrow \Lisse_{\Zp}(X_K)$$

by the recipe (for $\Spec R \subset X$ a small affine open faithfully flat over $W$)
$$\bL(\cE)|_{\Spec R} := \lim_r (\Hom_{A_{crys}(\hat{R}), \Fil, \phi}(\bM_{crys}(\cE)/p^r, A_{crys}(\hat{R})[1/p]/A_{crys}(\hat{R}))^*),$$
where the final $(-)^*$ denotes Pontryagin duality as in \cite[p43]{fal1} and (we remark) makes the functor covariant. By \cite[2.3, 2.6]{fal1} these definitions make sense and give rise to a full subcategory which we will call the \emph{Fontaine-Laffaille} $\Zp$-sheaves, and one should think of as globalised lattices in crystalline representations with Hodge tate weights restricted to the range $[0,p-2]$.

They enjoy the following comparison result \cite[\S2 (h)]{fal1}.

\begin{thm}
There is a canonical functorial morphism of \'etale sheaves respecting $\phi$ and $\Fil$ which has an inverse up to $\beta^{\otimes (p-2)}$ 
$$\bM_{crys}(\cE) \rightarrow \bM_{et}(\bL(\cE)).$$
\end{thm}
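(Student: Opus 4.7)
The plan is to produce the map as the adjoint of the tautological evaluation pairing defining $\bL$, exhibit an approximate inverse via the classical Fontaine-Laffaille comparison at Teichm\"uller points, and glue the resulting local data into a morphism of \'etale sheaves using Faltings' almost purity machinery.

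First, I would build the map locally. Over a small affine $\Spec R \subset X$ with an $F$-equivariant section $g: \hat{R} \hookrightarrow A_{crys}(\hat{R})$, the module $\bM_{crys}(\cE)$ carries its canonical $\phi$, $\Fil$ and horizontal Galois structure. By construction
$$\bL(\cE)|_{\Spec R} = \lim_r \Hom_{A_{crys}(\hat{R}),\Fil,\phi}\bigl(\bM_{crys}(\cE)/p^r,\ A_{crys}(\hat{R})[1/p]/A_{crys}(\hat{R})\bigr)^*,$$
where $(-)^*$ is Pontryagin duality, so Pontryagin bi-duality supplies a canonical evaluation pairing
$$\bL(\cE) \otimes_{\Zp} \bM_{crys}(\cE) \longrightarrow A_{crys}(\hat{R})$$
respecting every structure in sight. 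Since $\bL(\cE)$ is finite free over $\Zp$, the adjoint of this pairing furnishes the desired morphism $\bM_{crys}(\cE) \to \bM_{et}(\bL(\cE))$. Compatibility with $\phi$ and $\Fil$ is tautological from the definition of $\bL$, and horizontal Galois equivariance follows because the pairing is natural before any choice of section $g$ is made explicit.

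Next I would produce an inverse up to $\beta^{\otimes(p-2)}$ by reducing to a pointwise statement. Since both sides are finite locally free $A_{crys}(\hat{R})$-modules, an identification between them (up to the stated defect) may by (\ref{lattices on points}) be verified after pulling back to a single Teichm\"uller point $x:\Spf W' \hookrightarrow \fX$ in each component. There $x^*\cE$ is an honest Fontaine-Laffaille module over $W'$, and our map specialises to the classical Fontaine-Laffaille comparison for $x^*\cE$, compatible with the $\fS$-module description of the lattice by (\ref{FL equality}). The classical comparison is a period isomorphism after inverting $t = \log[\epsilon]$, and the integral version has denominators bounded by the maximum Hodge-Tate weight $p-2$; this yields precisely the claimed inverse modulo $\beta^{\otimes(p-2)}$.

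The main obstacle is the globalisation: passing from the small-affine construction to a morphism of \'etale sheaves on $X_K$ requires Faltings' almost purity theorem so as to descend from preperfectoid covers $R_\infty/R$, over which $A_{crys}$-cohomology becomes almost concentrated in degree zero, while simultaneously keeping the twisted horizontal Galois formula of \cite[5.5]{fal1} consistent across restrictions and verifying the precise form of the Fontaine-Laffaille defect. All of this is carried out in \cite[\S 2 (c)--(h)]{fal1}, and specialising that analysis to our $\cE$ completes the proof.
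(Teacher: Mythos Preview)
The paper does not supply an independent proof of this theorem at all: it is stated as a quotation of Faltings' result with the citation \cite[\S2 (h)]{fal1}, and the surrounding discussion only recalls enough of Faltings' setup to make the statement intelligible. So there is no argument in the paper to compare your sketch against beyond that reference, and your final paragraph, which defers the genuine analytic work to \cite[\S2 (c)--(h)]{fal1}, is in effect already the whole ``proof'' as the paper presents it.

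That said, the middle portion of your sketch contains a real gap. Your invocation of (\ref{lattices on points}) is a misapplication: that lemma compares two lattices inside a common vector bundle on an integral $W$-scheme by restricting to a single $W'$-valued point, but $\bM_{crys}(\cE)$ and $\bM_{et}(\bL(\cE))$ are modules over the enormous period ring $A_{crys}(\hat{R})$, not vector bundles on $X$, and the statement you want is not an equality of lattices but the existence of a two-sided approximate inverse whose defect is controlled by the specific element $\beta^{\otimes(p-2)}$. Restriction to a Teichm\"uller point does not control an $A_{crys}(\hat{R})$-linear map on the nose, since the fibre functor to a point loses the ``relative'' part of the period ring, and the bound on the defect is an honest computation in Faltings' paper rather than a formal consequence of the pointwise Fontaine--Laffaille theory together with (\ref{FL equality}). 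If you want to give more than a citation, the correct route is to follow Faltings' argument directly (construction of the pairing, almost \'etale descent, and the explicit estimate on the cokernel), rather than to bootstrap from the classical case via a lattice-spreading lemma that does not apply in this setting.
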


\subsubsection{}
Recall \cite[p67]{fal1} that in general a filtered $F$-isocrystal $\cF$ and a lisse $\Qp$-sheaf $\mathbb{V}$ are \emph{associated} if we have isomorphisms functorial in $R$ 
$$\cF(B_{crys}(\hat{R})) \cong \mathbb{V} \otimes_{\Qp} B_{crys}(\hat{R})$$
that preserve Galois actions, Frobenius and filtration.
Note that the theorem above implies in particular that when $\cE \in \FCrys_{\fX/W}^{[0,p-2]}$, $\cE[1/p]$ and $\bL(\cE)[1/p]$ are associated. The following comparison result will be very useful in what follows.

\begin{prop}
\label{2.6.5}
Fix $X/W$ an integral scheme with an unramified point $x: \Spec W(k') \rightarrow X$. Suppose $G/\Zp$ is a connected reductive group and that we are given a lisse $\Zp$ sheaf with $G$-structure $$\omega_{et}:\Rep_{\Zp}(G) \rightarrow \Lisse_{\Zp}(X_K)$$ and a filtered $F$-crystal with $G$-structure
$$\omega_{crys}:\Rep_{\Zp}(G) \rightarrow \FCrys_{\fX/W}.$$
Suppose that for some faithful representation $G \hookrightarrow GL(V)$, $\omega_{crys}(V) \in \FCrys_{\fX/W}^{[0,p-2]}$ and we have an identification of lisse $\Zp$-sheaves $$\bL(\omega_{crys}(V)) \cong \omega_{et}(V).$$
Suppose furthermore that there are tensors $s_\alpha:1 \rightarrow V^\otimes$ such that $G = GL_{s_\alpha}(V)$ and $\omega_{crys}(s_\alpha)$ and $\omega_{et}(s_\alpha)$ are identified under the canonical functorial system of isomorphisms
$$\omega_{crys}(V)[1/p]^\otimes \otimes B_{crys}(\hat{R}) \cong \omega_{et}(V)[1/p]^\otimes \otimes B_{crys}(\hat{R}).$$
Then:
\begin{enumerate}
\item For any representation $G \rightarrow GL(V')$ we have that $\omega_{crys}(V')[1/p]$ and $\omega_{et}(V')[1/p]$ are associated.
\item Whenever $\omega_{crys}(V') \in \FCrys_{\fX/W}^{[0,p-2]}$, we have that $\bL(\omega_{crys}(V')) = \omega_{et}(V')$.
\end{enumerate}

\end{prop}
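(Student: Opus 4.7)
The plan is to establish (1) by a Tannakian extension argument and then deduce (2) using a lattice-at-points principle applied at the point $x$.

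For (1), the preceding comparison theorem applied to $\omega_{crys}(V)$ (which lies in the FL range by hypothesis), combined with the identification $\bL(\omega_{crys}(V)) \cong \omega_{et}(V)$, supplies a canonical isomorphism
$$\iota_V: \omega_{crys}(V)[1/p] \otimes B_{crys}(\hat R) \rightiso \omega_{et}(V)[1/p] \otimes B_{crys}(\hat R)$$
over each small affine $\Spec R \subset X$ respecting filtration, Frobenius, connection, and $\Gamma_R$-action. By hypothesis $\iota_V$ sends $\omega_{crys}(s_\alpha)$ to $\omega_{et}(s_\alpha)$; since $G = GL_{s_\alpha}(V)$, this trivializes the relevant $G$-torsor and uniquely extends (via the Tannakian dictionary \ref{tann}) to a tensor isomorphism
$$\iota: \omega_{crys}[1/p] \otimes B_{crys}(\hat R) \rightiso \omega_{et}[1/p] \otimes B_{crys}(\hat R)$$
of fibre functors on all of $\Rep_{\Qp}(G_{\Qp})$. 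The extension automatically respects filtration, Frobenius, connection, and Galois action on every $V'$, because each structure is functorial with respect to the tensor operations that build $V'$ out of $V$ and its dual, and $\iota_V$ preserves them on $V$. This is precisely the assertion that $\omega_{crys}(V')[1/p]$ and $\omega_{et}(V')[1/p]$ are associated.

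For (2), by (1) the two sheaves $\bL(\omega_{crys}(V'))$ and $\omega_{et}(V')$ are both $\Zp$-lattices inside the same lisse $\Qp$-sheaf on $X_K$. On a connected scheme two $\Zp$-lattices inside a fixed lisse $\Qp$-sheaf are equal iff they agree after pullback to any one point (pullback preserves the $\Zp$-module structure inside the $\Qp$-ification, by the obvious analogue of Lemma \ref{lattices on points} for lisse $\Zp$-sheaves). Pulling back along $x$ and using that $\bL$ commutes with pullback along the unramified morphism $x$ (immediate from Faltings' functorial formula), the problem reduces to showing $\bL(x^*\omega_{crys}(V')) = x^*\omega_{et}(V')$ as $\Zp$-lattices in the common crystalline $\Gamma_{K'}$-representation.

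At the point we have the hypothesis $\bL(x^*\omega_{crys}(V)) = x^*\omega_{et}(V)$. This integral identification respects the tensors $s_\alpha$: it does so after inverting $p$ by part (1), and since the relevant lattices in $V^\otimes$ are $p$-torsion-free, integral agreement follows. Hence the two integral $G$-torsors $\uIsom_{s_\alpha}(V, \bL(x^*\omega_{crys}(V)))$ and $\uIsom_{s_\alpha}(V, x^*\omega_{et}(V))$ coincide, and since their associated $V'$-bundles $\cP \times^G V'$ recover $\bL(x^*\omega_{crys}(V'))$ and $x^*\omega_{et}(V')$ respectively for any $V'$ with $\omega_{crys}(V')$ in the FL range, the desired equality follows. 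The main obstacle is the integral Tannakian step at the point: the partial functor $\bL \circ x^*\omega_{crys}$ is defined only on the FL subcategory of $\Rep_{\Zp}(G)$, so one cannot directly invoke Broshi's equivalence to extend the $V$-level identification. This is circumvented by working at the torsor level, where the torsor is pinned down by the pair $(V, s_\alpha)$ alone and the associated-bundle construction $\cP \times^G V'$ remains meaningful for any $V'$ regardless of FL membership.
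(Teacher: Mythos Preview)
Your argument for (1) is essentially the paper's: identify the $G$-torsors after tensoring with $B_{crys}(\hat R)$ using the tensors $s_\alpha$, then read off the associated-bundle construction.

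For (2), your reduction to the point $x$ is fine (the paper does the mirror-image reduction on the crystalline side via $\bL^{-1}$, but that is cosmetic). The gap is in the last paragraph. You correctly flag the obstacle --- $\bL \circ x^*\omega_{crys}$ is only a partial functor on the Fontaine--Laffaille subcategory --- but your ``circumvention'' does not resolve it. Knowing that the torsor $\cP_1 = \uIsom_{s_\alpha}(V,\bL(x^*\omega_{crys}(V)))$ is pinned down by $(V,s_\alpha)$ only tells you what $\cP_1 \times^G V'$ \emph{is}; it does not tell you that this equals $\bL(x^*\omega_{crys}(V'))$. That equality is precisely the Tannakian statement ``$\bL\circ x^*\omega_{crys}$ is the fibre functor attached to $\cP_1$'', which is what you said you cannot invoke. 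Concretely, to realise $V'$ inside $V^\otimes$ you must pass through representations whose Hodge--Tate weights leave the range $[0,p-2]$, so $\bL$ is undefined along the way and there is no mechanism forcing $\bL(x^*\omega_{crys}(V'))$ to agree with the associated bundle.

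The paper fixes this by replacing $\bL$ at the point with a functor that \emph{is} defined and $\otimes$-compatible on all crystalline lattices: Kisin's $\fS$-module functor. One sets
\[
\eta:\ \Rep_{\Zp}(G)\ \xrightarrow{x^*\omega_{et}}\ \Rep^{crys}_{\Zp}(\Gamma_K)\ \xrightarrow{\fM}\ \Mod^{\phi}_{/\fS}\ \xrightarrow{\ \phi^*(-)/u\ }\ \Mod_W,
\]
a genuine tensor functor with $\eta[1/p]\cong D_{crys}\circ x^*\omega_{et}[1/p]$ by (\ref{kismod}). Proposition~\ref{FL equality} gives $\eta(V')=\bL^{-1}(x^*\omega_{et}(V'))$ whenever $V'$ is in the FL range, and also $\eta(V)=x^*\omega_{crys}(V)$. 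Since now both $\eta$ and $x^*\omega_{crys}$ are honest tensor functors agreeing on $V$, they agree on every $V^T$, and projecting to the summand $V'$ gives $\eta(V')=x^*\omega_{crys}(V')$, finishing the argument. The point is that $\fM$ carries you through the intermediate tensor powers that $\bL$ cannot handle.
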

\begin{proof}
For (1), the inputs allow us to canonically identify the torsors 
$$\uIsom_{s_\alpha}(V_W, \omega_{crys}(V)) \otimes B_{crys}(\hat{R}) \cong \uIsom_{s_\alpha}(V, \omega_{et}(V))\otimes B_{crys}(\hat{R})$$
with their structures as filtered $F$-crystals with $G$-structure and as \'etale sheaves. The result then follows immediately from the relationship between torsors and fibre functors.

For (2), since (1) already implies that $\bL(\omega_{crys}(V'))[1/p] = \omega_{et}(V')[1/p]$ it remains to check that under this identification the two $\Zp$-lattices $\bL(\omega_{crys}(V'))$ and $\omega_{et}(V')$ agree. By \cite[2.6]{fal1} $\bL$ has an inverse and it suffices to check that the lattices $\omega_{crys}(V')$ and $\bL^{-1}(\omega_{et}(V'))$ inside $\omega_{crys}(V')[1/p]$ agree. But these are lattices in a vector bundle on an integral scheme so by (\ref{lattices on points}) it suffices to check equality at a point, and in particular we may use the unramified point $x$.

To check this, we use the facts about $\fS$-modules from the previous section. Note that we already know that $x^*\omega_{et}(V')[1/p]$ is crystalline by part (1) and $$x^*\omega_{crys}(V')[1/p] = D_{crys}(x^*\omega_{et}(V')[1/p])=:D'.$$
Inside this, we are considering two lattices: let $L:=x^*\omega_{et}(V')$. One the one hand we have $\D^{FL} := \bL^{-1}(L) \subset D'$. On the other hand we have $x^*\omega_{crys}(V')$ which we shall describe as a lattice using $\fS$-modules.

It is harmless (say, by making a finite base change if necessary) to assume $W=W(k')$. The idea is to use $\fS$-modules to describe a new fibre functor $\eta: \Rep_{\Zp}(G) \rightarrow \Mod_W$ equipped with an identification $\eta[1/p] \cong D_{crys} \circ \omega_{et}[1/p]$ of functors into $\Mod_K$  and check that as a functor into lattices it agrees with $x^*\omega_{crys}(V')$. The construction is
$$\eta: \Rep_{\Zp}(G) \map{\omega_{et}} \Rep_{\Zp}(\Gamma_{K}) \map{\fM} \Mod_{/\fS}^\phi \rightarrow \Mod_W$$
where the final arrow is $\fM \mapsto (\phi^*\fM)/u$. This functor comes with a canonical $\eta[1/p] \cong D_{crys} \circ \omega_{et}[1/p]$ by (\ref{kismod}).

What is more, by (\ref{FL equality}) we have the equality of lattices $\eta(V') = \D^{FL} \subset D'$, and will therefore be done if we can show that $x^*\omega_{crys}(V') = \eta(V')$. To do this, first note that $x^*\omega_{crys}(V) = \eta(V)$ again by (\ref{FL equality}) and so since $x^*\omega_{crys}$ is a tensor functor, if $V^T$ is any finite rank tensorial construction that contains $V'$ we also have
$$x^*\omega_{crys}(V^T) = x^*\omega_{crys}(V)^T = \eta(V)^T = \eta(V^T).$$
Finally, letting $e:V^T \rightarrow V'$ be the idempotent projector onto $V'$ and $\hat{e}$ its realisation under the identified functors
$$\hat{e} = x^*\omega_{crys}(e)[1/p] = D_{crys}\circ \omega_{et}[1/p](e) = \eta(e)[1/p]$$
we have that 
$$x^*\omega_{crys}(V') = \hat{e} x^*\omega_{crys}(V^T) = \hat{e} \eta(V^T) = \eta(V') \subset D',$$
finishing the proof.

\end{proof}

\subsubsection{}
\label{integrally associated}
The above is an example of the following defined situation. Let $X/W$ be a scheme, and consider a pair of fibre functors
$$\omega_{et}: \Rep_{\Zp}(G) \rightarrow \Lisse_{\Zp}(X_K)$$
and
$$\omega_{crys}: \Rep_{\Zp}(G) \rightarrow \FCrys_{\fX/W}.$$
We say that $\omega_{et}$ and $\omega_{crys}$ are \emph{integrally associated} if:
\begin{enumerate}
\item The \emph{fibre functors} $\omega_{et}[1/p]$ and $\omega_{crys}[1/p]$ are associated. In other words if we have isomorphisms functorial in $R$ between $\bM_{crys,R} \circ \omega_{crys}[1/p]$ and $\bM_{et,R} \circ \omega_{et}[1/p]$ that preserve Galois actions, Frobenius and filtration.
\item Whenever $V' \in \Rep_{\Zp}(G)$ is such that $\omega_{crys}(V') \in \FCrys^{[0,p-2]}_{\fX/W}$, we have that $\bL(\omega_{crys}(V')) = \omega_{et}(V')$. 
\end{enumerate}

We remark that this is a weaker condition that one would ideally like, with the integrality statement given only in the Fontaine-Laffaille range. In particular for $p=2$ the statement becomes almost vacuous in our context. However, we anticipate that once it can be fortified using a more robust version of relative integral $p$-adic Hodge theory than is currently available, our results will still hold.

\subsubsection{}
Finally, we recall some $p$-adic comparison theorems we will need from \cite{fal1}. First with $p$ inverted, we have the following special case of \cite[6.3]{fal1}.

\begin{thm}
\label{comp Qp}
Let $f:X \rightarrow Y$ be a proper morphism of smooth schemes over $W$, $\cE$ a filtered $F$-isocrystal on $\fX/W$, $\bL$ a lisse $\Qp$ sheaf on $X_K$ and assume $\cE$ and $\bL$ are associated. Then their direct images $R^if_*\cE$ and $R^if_*\bL$ (in the crystalline and \'etale categories respectively) are associated.
\end{thm}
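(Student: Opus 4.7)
The plan is to invoke Faltings's relative $p$-adic comparison theorem \cite[6.3]{fal1} essentially directly, so the bulk of the work is bookkeeping: matching our setup to his and checking that the output isomorphism has the properties that define being associated in the sense of (\ref{integrally associated})(1). I would first reduce to a local statement on the base. Being associated is a functorial condition over small affine opens $\Spec R \subset Y$ (one checks over each $\hat{R}$ independently), so I may assume $Y = \Spec R$ with $R$ small and faithfully flat over $W$, and restrict attention to the higher direct images computed on this open.

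Next, over such a small base, Faltings constructs for each $i$ a canonical functorial comparison morphism
$$R^if_{crys,*}\cE \otimes_{\hat{R}} B_{crys}(\hat{R}) \longrightarrow R^if_{et,*}\bL \otimes_{\Qp} B_{crys}(\hat{R})$$
respecting Frobenius, filtration, and the horizontal Galois action, and his main theorem shows this is an isomorphism whenever the input $\cE$ and $\bL$ are associated (which is our hypothesis). The left-hand side is, by base change of crystalline cohomology along $\hat{R} \rightarrow A_{crys}(\hat{R})$, exactly $\bM_{crys,R}(R^if_*\cE)$, and the right-hand side is exactly $\bM_{et,R}(R^if_*\bL)$. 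The preservation of the three structures is precisely what is needed for the higher direct images to be associated.

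The main obstacle is really just verifying that the hypotheses of \cite[6.3]{fal1} are satisfied in our formulation: one needs the properness and smoothness of $f$ (given), the filtered $F$-isocrystal structure on $\cE$ over the $p$-adic formal completion $\fX$ (given), and the lisse $\Qp$-sheaf $\bL$ on the rigid generic fibre $X_K$ (given), all aligned with Faltings's local framework of small affine charts, Frobenius lifts, and the period sheaves $A_{crys},B_{crys}$. One small technical point to handle is the independence of the comparison isomorphism from the auxiliary choices (local Frobenius lifts used to construct $g:\hat{R} \rightarrow A_{crys}(\hat{R})$ when forming $\bM_{crys}$); this is built into Faltings's construction since changes of lifts are absorbed by the connection via the formula for the horizontal Galois action recalled above. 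With these identifications in place, the theorem follows immediately from \cite[6.3]{fal1}.
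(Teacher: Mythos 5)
Your proposal is correct and takes essentially the same approach as the paper, which simply records this result as a special case of Faltings \cite[6.3]{fal1} without giving further argument; you just spell out the bookkeeping (localization on the base, identification of $\bM_{crys}$ and $\bM_{et}$, independence of auxiliary choices) needed to see the match.
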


Next, integrally, we have a more restrictive theorem \cite[6.2]{fal1} (including `\emph{Remark}').
\begin{thm}
\label{comp Zp}
Let $f:X \rightarrow Y$ be a proper morphism of smooth schemes over $W$, $a,b \geq 0$, $\cE \in \FCrys_{\fX/W}^{[0,a]}$. Then if $a+b < p-2$ or if $a+b=p-2$ and $f$ is an abelian scheme, then $R^bf_*\cE \in \FCrys_{\mathfrak{Y}/W}^{[0,a+b]}$ and we have a natural identification of lisse $\Zp$-sheaves
$$\bL(R^bf_*\cE) = R^bf_*\bL(\cE).$$
\end{thm}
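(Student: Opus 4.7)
The plan is to acknowledge at the outset that the statement is essentially Theorem 6.2 of Faltings \cite{fal1} (including the \emph{Remark} there which upgrades the abelian scheme case by one degree), so the serious content lives in that paper; I will indicate what the key moving parts are and what I would need to verify to invoke it. First I would pass to the local situation, replacing $Y$ by a small affine open $\Spec R$ equipped with a Frobenius lift, so that the $A_{crys}(\hat R)$ and $\bM_{crys}$ formalism recalled in the previous subsection is available. Combined with (\ref{comp Qp}) applied to $\cE[1/p]$ and $\bL(\cE)[1/p]$ (which are associated because $\cE \in \FCrys^{[0,a]}_{\fX/W}$ lies in the Fontaine-Laffaille range), this already identifies $R^bf_*\cE[1/p]$ and $R^bf_*\bL(\cE)[1/p]$ as associated objects on $\mathfrak Y_\eta$.

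The integral content then splits into two steps. Step one is to check that $R^bf_*\cE$ still lies in $\FCrys^{[0,a+b]}_{\mathfrak Y/W}$: i.e. that the filtration on the direct image is concentrated in degrees $[0,a+b]$ and strongly divisible. The filtration bound follows from the Hodge-to-de Rham spectral sequence for $f$, since $\cE$ contributes filtration jumps of length at most $a$ and $\Omega^\bullet_{X/Y}$ of length at most $b$. Strong divisibility and the fact that the range $[0,a+b]$ is in fact permissible is where the hypothesis $a+b < p-2$ enters: one needs the ``$p-1$'' in the Fontaine-Laffaille theorem to accommodate the combined filtration length so that the divided-power Frobenius causes no denominator issues. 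The abelian scheme refinement at $a+b=p-2$ uses that for an abelian scheme the Hodge filtration on $R^1f_*$ has length one and the full direct image is controlled by exterior powers, giving an extra unit of slack via Poincaré duality.

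Step two is the identification $\bL(R^bf_*\cE) = R^bf_*\bL(\cE)$ as lisse $\Zp$-sheaves. Given step one, both sides make sense, and on the rational level they agree because they are associated to the same filtered $F$-isocrystal (and $\bL$ is fully faithful on its essential image). To upgrade to an integral identification one uses Faltings' relative crystalline comparison: concretely, the definition of $\bL$ via $\Hom_{A_{crys}(\hat R),\Fil,\phi}(\bM_{crys}(-)/p^r, A_{crys}(\hat R)[1/p]/A_{crys}(\hat R))^*$ is computed termwise on the direct image, and Faltings' almost purity plus the comparison between $\bM_{crys}$ and $\bM_{et}$ on the universal object identifies these computations with the \'etale $R^bf_*$ of the corresponding termwise object attached to $\bL(\cE)$.

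The main obstacle, if one were trying to prove this from scratch, is precisely the second step: setting up enough relative integral $p$-adic Hodge theory to control the Pontryagin duals defining $\bL$ under proper pushforward, and keeping track of the almost-mathematics errors through the spectral sequence. This is the technical heart of \cite{fal1}, and my proposal is simply to invoke it: after verifying that our setup matches Faltings' hypotheses (smooth $X,Y$ over $W$, proper $f$, $\cE$ in the Fontaine-Laffaille window), the conclusion is quoted from \cite[6.2]{fal1} and its \emph{Remark}.
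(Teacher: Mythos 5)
Your approach matches the paper exactly: the paper gives no proof of this statement but simply quotes it as \cite[6.2]{fal1} together with the \emph{Remark} there, which is precisely what you propose to do. Your supplementary sketch of the internal moving parts (Hodge-to-de Rham filtration bounds, the role of $a+b < p-2$ in avoiding denominator issues, the abelian-scheme slack via exterior powers and duality) is a reasonable gloss but is not part of the paper's argument, which treats the result as a black box.
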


\section{Filtered $F$-crystals on Shimura Varieties}
\label{S3}

\subsection{Canonical $G$-bundles on integral models of Shimura Varieties}
In this section we revisit the constructions of \cite[\S4]{tl}, using $\fS$-modules to define canonical lattices inside the ``standard principal bundles'' on integral models Shimura varieties, but in the rather less technical setting where our integral models are over $\cO_{E,(v)}$ rather than (as in \cite{tl}) over the whole of $\cO_E[1/N]$. It will be obvious that the definitions in both papers are compatible. Using local $p$-adic Hodge theoretic methods including the recent paper of Liu and Zhu \cite{lz} we may avoid the extra condition imposed for example in \cite{milne3} and then \cite{tl} that $Z(G)^\circ$ is split by a CM field.

\subsubsection{}
Our setup will be as follows. Suppose $G/\Z_{(p)}$ is a (connected) reductive group (which we will freely confuse with $G_\Q$) and $(G,\fX)$ a Shimura datum in the usual sense, for example of \cite[\S 2.1]{del2}. Consider, for $K_p = G(\Zp)$ and $K^p \subset G(\A^{\infty,p})$ compact open and varying in an inverse system, the tower of (canonical models for) Shimura varieties, each of which is a quasiprojective variety over the number field $E=E(G,\fX)$
$$\Sh_{K_p}(G,\fX) = \lim_{K^p} \Sh_{K^pK_p}(G,\fX).$$

This tower comes equipped with a continuous right `Hecke' action of $G(\A^{\infty,p})$ and for each $K^p$ we may identify the Hecke quotient
$$\Sh_{K_p}(G,\fX)/K^p = \Sh_{K^pK_p}(G,\fX).$$

\subsubsection{}
Recall that for $v|p$ a place of $E$, Kisin \cite{kis2} following Milne defines an \emph{integral canonical model} $\cS_{K_p}$ for $\Sh_{K_p}(G,\fX)$ to be an inverse limit $\cS_{K_p} = \lim_{K^p} \cS_{K^pK_p}$ with a $G(\A^{\infty,p})$-action such that:
\begin{enumerate}
\item Each $\cS_{K^pK_p}$ is a smooth quasiprojective $\cO_{(v)}$-scheme and is given together with an identification over $E$ with $\Sh_{K^pK_p}$ in such a way that extends to a resulting $G(\A^{\infty,p})$-equivariant identification
$$\cS_{K_p} \otimes E \rightiso \Sh_{K_p}.$$
\item The scheme $\cS_{K_p}$ has the \emph{extension property}: whenever $T/\cO_{(v)}$ is a regular formally smooth scheme, any map $T_E \rightarrow \cS_{K_p, E}$ must extend to a map $T \rightarrow \cS_{K_p}$.
\end{enumerate}
It is easy to show that these properties imply that such a model $\cS_{K_p}$ if it exists is unique up to canonical isomorphism, and the main theorem of \cite{kis2} is that such models exist whenever $(G,\fX)$ is of abelian type.

In this paper we will work locally, so from now on we will abuse notation slightly and assume we are working over a fixed prime $v|p$ of the reflex field $E$ and considering formal schemes $\cS_{K_p}/\cO_v$ whose generic fibre $\Sh_{K_p}/E_v$ we view as an adic space over the completion of $E$ at $v$. However, if we assume $Z(G)^\circ$ is split over a CM field, most of what we do works more globally (see \cite{tl}) and we would expect this to be true in general.

\subsubsection{}
\label{3.1.3}
Let $Z_{nc} \subset Z(G)$ be the largest subtorus of $Z(G)$ that is split over $\R$ but has no subtorus split over $\Q$, and set $G^c = G/Z_{nc}$. Recall \cite[3.1.3]{tl} that although one does not have functoriality of this construction in general if $(G,\fX) \rightarrow (G_2,\fX_2)$ is a map of Shimura data then there is an induced map $G^c \rightarrow G_2^c$.

Recall that Liu and Zhu \cite{lz} construct a filtered $G^c$-bundle with flat connection $P_{K_p}$ on the adic space $\Sh_{K_p}(G,\fX)$ as follows. First, they use the pro-Galois $G^c(\Zp)$-cover
$$\Sh(G,\fX) \rightarrow \Sh_{K_p}(\fX)$$
to construct a $\Qp$-linear fibre functor
$$\omega_{et}: \Rep_{\Qp}(G^c) \rightarrow \Lisse_{\Qp}(\Sh_{K_p}).$$
They then note using special points and their main theorem \cite{lz} that the image of this functor is de Rham, and that it therefore may be extended to a functor
$$\omega_{dR}: \Rep_{\Qp}(G^c) \rightarrow \Fil^\nabla_{\Sh_{K_p}/E_v}.$$
By the same argument of Deligne-Milne as in (\ref{silly trick}) this in particular gives a functor
$$\Rep_{E_v}(G^c_{E_v}) \rightarrow \Fil^\nabla_{\Sh_{K_p}/E_v}$$
which precisely defines a filtered $G^c$ bundle $P_{K_p}$ with flat connection on $\Sh_{K_p}/E_v$.

We remark that Liu and Zhu conjecture \cite[Remark 4.1 (ii)]{lz} that this should agree with the analytification of Milne's construction \cite[\S3]{milne3} in the case where $Z(G)^\circ$ is split by a CM field. One consequence of our argument (comparing the construction in this paper with that of \cite{tl}) is that this is certainly true in the abelian type case.

One nice consequence of this construction is that it is completely functorial \cite[3.9 (ii)]{lz} and extends $p$-adic Hodge theory over a point \cite[1.5 (i)]{lz}, giving the following compatibilities.
\begin{lem}
\label{lz facts}
\begin{enumerate}
\item Suppose $(G,\fX) \rightarrow (G_2,\fX_2)$ is a morphism of Shimura data induced by a map $G \rightarrow G_2$ of reductive models over $\Z_{(p)}$, and take $E \supset E(G,\fX),E(G_2,\fX_2)$, $v|p$ a place of $E$. Then pulling back $P_{G_2(\Zp)}$ along $$i: \Sh_{G(\Zp)}(G,\fX)_{E_v} \rightarrow \Sh_{G_2(\Zp)}(G_2,\fX_2)_{E_v}$$ we have a canonical identification
$$i^*P_{G_2(\Zp), E_v} \cong P_{G(\Zp), E_v} \times^{G^c} G_2^c$$
of filtered $G_2^c$-bundles with connection on $\Sh_{G(\Zp)}(G,\fX)_{E_v}$.
\item The Hecke operators $G(\A^{\infty,p})$ act equivariantly on $P_{G(\Zp)} \rightarrow \Sh_{G(\Zp)}(G, \fX)$.
\item Let $x \in \Sh_{K_p}(G,\fX)$ be any (closed) point. Then $x^*\omega_{dR} = D_{dR} \circ x^*\omega_{et}$ where $D_{dR}$ is Fontaine's functor $V \mapsto (V \otimes_{\Qp} B_{dR})^{\Gamma_{\kappa(x)}}.$
\end{enumerate}
\end{lem}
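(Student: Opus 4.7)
The plan is to reduce every claim to a corresponding property of the étale fibre functor $\omega_{et}$ and then transport it to $\omega_{dR}$ by invoking two features of the Liu--Zhu construction: its functoriality in the de Rham local system (their 3.9 (ii)) and its pointwise compatibility with $D_{dR}$ (their 1.5 (i)). Throughout I use the Tannakian dictionary of \S\ref{S2} to translate between fibre functors and $G^c$-bundles.

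For (1), I would first verify the analogue at étale level. The map of Shimura data lifts to a morphism of $G(\A^{\infty,p})$-equivariant towers $\Sh(G,\fX) \to \Sh(G_2,\fX_2)$ which is $G^c(\Zp) \to G_2^c(\Zp)$-equivariant, so pulling the $G_2^c(\Zp)$-torsor $\Sh(G_2,\fX_2) \to \Sh_{G_2(\Zp)}(G_2,\fX_2)$ back along $i$ is canonically isomorphic to the pushout of the $G^c(\Zp)$-torsor $\Sh(G,\fX) \to \Sh_{G(\Zp)}(G,\fX)$ along $G^c(\Zp) \to G_2^c(\Zp)$. Passing to associated lisse sheaves this yields $i^*\omega_{et,G_2} \cong \omega_{et,G} \circ \mathrm{Res}^{G_2^c}_{G^c}$ as fibre functors $\Rep_{\Qp}(G_2^c) \to \Lisse_{\Qp}(\Sh_{G(\Zp)}(G,\fX))$. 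Applying the Liu--Zhu de Rham construction, which is compatible with pullback along morphisms of rigid spaces and with restriction of representations, gives the analogous identification for $\omega_{dR}$, which is exactly the claim after unwinding the Tannakian description of pushout. Part (2) is obtained by exactly the same argument with the Hecke element $g \in G(\A^{\infty,p})$ in place of $i$: by construction the $G(\A^{\infty,p})$-action on the integral model tower lifts to infinite level and commutes with the torsor structure, so $\omega_{et}$ is Hecke equivariant and functoriality of the Liu--Zhu construction propagates this to $\omega_{dR}$.

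For (3), fix $V \in \Rep_{\Qp}(G^c)$. Then $\omega_{et}(V)$ is, by construction, a de Rham lisse $\Qp$-sheaf on $\Sh_{K_p}(G,\fX)$ (de Rham-ness is the input to the Liu--Zhu construction), whose pointwise specialization at $x$ is $x^*\omega_{et}(V)$; the Liu--Zhu theorem asserts precisely that evaluating their associated filtered vector bundle with connection at $x$ yields $D_{dR}(x^*\omega_{et}(V))$. Since both sides of the desired identity are tensor functorial in $V$, and they agree as fibre functors after composition with any representation, the identification $x^*\omega_{dR} = D_{dR} \circ x^*\omega_{et}$ follows. The only real content beyond citing Liu--Zhu is the étale-torsor pullback/pushout identification used in (1), and this is essentially formal once one notes that passing to $G^c$ (quotienting by $Z_{nc}$) is compatible with the morphism $G \to G_2$ by the observation recalled in \ref{3.1.3}; so I do not expect this step to present a serious obstacle.
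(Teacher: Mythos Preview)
Your proposal is correct and follows the same approach the paper takes: the lemma is stated as an immediate consequence of Liu--Zhu's functoriality result \cite[3.9 (ii)]{lz} and their pointwise compatibility with $D_{dR}$ \cite[1.5 (i)]{lz}, and you have simply unpacked those citations together with the formal \'etale-torsor pullback/pushout identification, which is exactly the right glue. The paper gives no further detail beyond the citations, so your write-up is if anything more explicit than the original.
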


\subsubsection{}
\label{3.1.5}
With this setup in place we can make our first key definition. Suppose we have a (smooth) integral canonical model $\cS_{G(\Zp)}$ for $\Sh_{G(\Zp)}(G,\fX)$ (viewed as adic spaces over $\cO_v$ and $E_v$ respectively which we recall are absolutely unramified). A \emph{weak crystalline canonical model} $\cP_{G(\Zp)}$ for $P_{G(\Zp)}$ is a $G(\Afp)$-equivariant weak filtered $F$-crystal with $G^c$-structure $\cP_{G(\Zp)} \in \FCrys_{\cS_{G(\Zp)}/\cO_v}^{G^c}$ together with a $G^c_{E_v} \times G(\Afp)$-equivariant identification $i: \cP_{G(\Zp),E_v} \rightiso P_{G(\Zp)}$ compatible with connections and filtrations and satisfying the following ``crystalline points lattice + Frobenius'' condition.

Recall that the pro-Galois cover $\Sh(G,\fX) \rightarrow \Sh_{G(\Zp)}(G,\fX)$ can be used to define a $\Zp$-linear tensor functor
$$\omega_{et}: \Rep_{\Zp}(G^c) \rightarrow \Lisse_{\Zp}(\Sh_{G(\Zp)(G,\fX)}),$$
and we let
$$\omega_{crys}: \Rep_{\Zp}(G^c) \rightarrow \FCrys_{\cS_{G(\Zp)}/\cO_v}$$
be the $\Zp$-linear fibre functor induced by $\cP_{G(\Zp)}$.

Let $x \in \cS_{G(\Zp)}(W(k'))$ be a point defined over an unramified extension $K=W(k')[1/p]/E_v$ extending over the special fibre and for which $x^*\omega_{et}[1/p]$ takes values in crystalline representations of $\Gamma_{K}$ (from now on we call such points ``crystalline points''). Then we note that by (\ref{lz facts} (3)) we may identify
$$\theta_x: x^*i^*\omega_{crys} \cong D_{crys} \circ x^*\omega_{et}[1/p].$$

The ``crystalline points lattice + Frobenius'' (CPLF) condition says that for all crystalline points $x$, we have the following identifications.
\begin{itemize}
\item (L condition) The lattices on the left hand side defined by $L \mapsto x^*\omega_{crys}(L)$ agree under $\theta_x$ with the lattices on the right hand side defined by $L \mapsto \phi^*\fM(x^*\omega_{et}(L))/u$ and (\ref{kismod}).
\item (F condition) The Frobenius on $x^*i^*\omega_{crys}$ induced from that given on $i^*\cP_{G(\Zp)}$ viewed as an $F$-isocrystal with $G^c$-structure agrees under $\theta_x$ with the Frobenius on $D_{crys} \circ x^*\omega_{et}[1/p]$ coming from $p$-adic Hodge theory.
\end{itemize} 

Finally, a (strong) \emph{crystalline canonical model} $\cP_{G(\Zp)}$ must satisfy the additional condition that for all local Frobenius lifts $(U_\beta,\phi_\beta)$ on $\cS_{G(\Zp)K}$ at any finite level $K \subset G(\Afp)$ that $\cP_{G(\Zp)}/K$ is a (strongly divisible) filtered $F$-crystal with $G$-structure.

We can also admit a notion of crystalline canonical models over a finite \'etale extension of $\cO_v$ in the obvious fashion.

\begin{prop}
\label{main uniqueness}
\begin{enumerate}
\item The crystalline canonical model $(\cP_{G(\Zp)}, i)$ if it exists is unique up to canonical isomorphism.
\item Let $f: (G,\fX) \rightarrow (G_2,\fX_2)$ be a map of Shimura data induced by a map $G \rightarrow G_2$ of reductive groups over $\Z_{(p)}$, and $(\cP,i),(\cP_2,i_2)$ crystalline canonical models over each. Then we may canonically identify $\cP \times^{G^c} G_2^c \cong f^*\cP_2$ as weak filtered $F$-crystals with $G_2^c$-structure.
\end{enumerate}
\end{prop}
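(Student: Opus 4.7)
The plan is to treat both parts via the same Tannakian strategy. Each crystalline canonical model is encoded by a $\Zp$-linear fibre functor $\omega_{crys}:\Rep_{\Zp}(G^c)\to\FCrys_{\cS_{G(\Zp)}/\cO_v}$ together with an identification $i$ on the generic fibre with the Liu--Zhu functor. To compare two such data I would proceed in two steps: (a) agreement on the generic fibre as filtered $F$-isocrystals with $G^c$-structure, handled via the F-condition of the CPLF axiom together with horizontal propagation from an unramified crystalline $W(k')$-point $x$ of a fixed connected component of $\cS_{G(\Zp)K^p}$, and (b) agreement of the associated integral lattices, via the L-condition and the lattice criterion (\ref{lattices on points}).

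For (1), let $(\cP,i),(\cP',i')$ be two crystalline canonical models. The $G(\Afp)$-equivariant isomorphism $j:=i'^{-1}\circ i:\cP_{E_v}\rightiso\cP'_{E_v}$ of filtered $G^c$-bundles with connection on the generic fibre is the starting point. Both Frobenius isomorphisms on $\cP_{E_v}$ and $\cP'_{E_v}$ transported to $\omega_{dR}$ are horizontal, and by the F-condition at an unramified crystalline point $x$ they both equal the canonical Frobenius on $D_{crys}(x^*\omega_{et}[1/p])$; horizontality of their difference then forces $j$ to intertwine the Frobenius structures on the component containing $x$. Next, for any $V\in\Rep_{\Zp}(G^c)$ the two lattices $\omega_{crys}(V),\omega_{crys}'(V)\subset\omega_{dR}(V)$ both restrict at $x$ to $\phi^*\fM(x^*\omega_{et}(V))/u$ by the L-condition and (\ref{kismod}), whence they coincide globally by (\ref{lattices on points}). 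This upgrades $j$ to the desired isomorphism $\cP\cong\cP'$ as filtered $F$-crystals with $G^c$-structure.

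For (2), (\ref{lz facts}(1)) provides a canonical generic-fibre identification $f_\eta^*P_{G_2(\Zp)}\cong P_{G(\Zp)}\times^{G^c}G_2^c$. Combined with $i,i_2$ this identifies the generic fibres of the two weak filtered $F$-crystals $f^*\cP_2$ and $\cP\times^{G^c}G_2^c$ on $\cS_{G(\Zp)}$ (the pullback $f^*$ being defined in the weak category without needing compatibility of Frobenius lifts). At an unramified crystalline point $x\in\cS_{G(\Zp)K^p}(W(k'))$ with image $f\circ x\in\cS_{G_2(\Zp)K_2^p}(W(k'))$ for appropriate $K_2^p$, and any $V\in\Rep_{\Zp}(G_2^c)$, the L- and F-conditions for $\cP_2$ at $f\circ x$ and for $\cP$ at $x$ yield identical lattice and Frobenius data, using the standard functoriality $x^*f^*\omega_{et,2}(V)\cong x^*\omega_{et}(V|_{G^c})$ of the pro-\'etale construction under the morphism of Shimura data. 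Applying the argument of (1) then gives a canonical global identification.

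The principal technical point in both parts is ensuring the existence of an unramified crystalline $W(k')$-point in each connected component of $\cS_{G(\Zp)K^p}$: this activates both the horizontal propagation of Frobenius and the lattice criterion. I would verify this using density of special points in the generic fibre together with the extension property of Kisin's integral models, and the observation that the restriction of $\omega_{et}(V)$ to an unramified special point factors through an abelian Galois group and is crystalline at $v$.
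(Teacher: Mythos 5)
Your proposal follows essentially the same two-step strategy as the paper's proof: reduce Frobenius uniqueness to a single crystalline point via the F-condition and horizontality (the paper appeals to \cite[1.18]{og} for the same fact), and reduce lattice uniqueness to a single crystalline point via the L-condition together with (\ref{lattices on points}), with the existence of a crystalline point on each component supplied by special points and the extension property (the paper cites \cite[2.2.4]{kis2}). Part (2) is handled in both your proposal and the paper by observing that crystalline points push forward along $f$ and then re-running the argument of part (1).
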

\begin{proof}
For the uniqueness, the key fact is that each component of $\cS_{G(\Zp)}$ has at least one crystalline point, which may be seen using special points via the argument of Kisin \cite[2.2.4]{kis2}. With this in hand, uniqueness of Frobenius, since it is a horizontal section, is immediate from the F condition (see for example \cite[1.18]{og}). 

It therefore suffices to check the uniqueness of lattices. Suppose we have two models $(\cP,i), (\cP',i')$. The question is whether $i^{-1} \circ i':\cP'_{E_v} \rightiso \cP_{E_v}$ extends over $\cO_v$ to give a canonical isomorphism of the models. By \cite[4.1.4]{tl} this is reduced to a question about whether the lattices induced by the fibre functors on each side are the same, and by (\ref{lattices on points}) and the L condition this follows, again given our observation that each component of $\cS_{G(\Zp)}$ contains a crystalline point.

For (2), we note that any crystalline point $x$ of $\cS_{G(\Zp)}$ can also be viewed via $\cS_{G(\Zp)} \rightarrow \cS_{G_2(\Zp)}$ as a crystalline point of $\cS_{G_2(\Zp)}$ (it is easy to see that the corresponding \'etale $G_2^c(\Zp)$-bundles over each are identified). This, together with the L condition and (\ref{lattices on points}), is enough to check the desired identification of lattices, and the Frobenius actions also agree because they will agree at a crystalline point by $p$-adic Hodge theory and the F condition and since they are horizontal sections of the isocrystal $\uHom(F^*\omega_{f^*\cP_2}[1/p], \omega_{f^*\cP_2}[1/p])$ this is enough.
\end{proof}

\subsection{Special type case}
\label{3.2}
Consider a pair $(T,h)$ where $T/\Q$ is a torus that splits over a number field that is unramified at $p$ and $h:\bS \rightarrow T_{\R}$ a map of real groups. This defines a Shimura datum that gives rise to a zero-dimensional Shimura variety with an obvious smooth integral model. In this short section we give a direct construction of the crystalline canonical model in this case.

\subsubsection{}
Fix (abusing notation) $T/\Z_{(p)}$ the integral model of $T$ obtained by the usual \'etale descent, let $E=E(T,h)$ and take $K = K^p T(\Zp) \subset T(\Af)$ a sufficiently small compact open so we obtain a Shimura variety 
$$\Sh_K(T,h) = \coprod_{i} \Spec E_i$$
with each $E_i/\Q$ unramified at $p$. The tower $\Sh_{K^p}(T,h) \rightarrow \Sh_K(T,h)$ is a $T^c(\Zp)$-Galois cover that gives rise to a fibre functor
$$\omega_{et}: \Rep_{\Zp}(T^c) \rightarrow \prod_{i} \Rep_{\Zp}(\Gamma_{E_i}).$$
For $v$ a place of $E$ over $p$, taking the rigid fibre of the $v$-adic completion corresponds to $\otimes_E E_v$ and localising the above functor we have (re-indexing the completions of every $E_i$ by $j$)
$$\omega_{et}: \Rep_{\Zp}(T^c) \rightarrow \prod_j \Rep_{\Zp}(\Gamma_{E_j}).$$

Let $\cS_{K} = \coprod_{j} \Spf \cO_{E_j}$ be the obvious smooth canonical integral model over $W := \hat{\cO_{E,v}}$.

\subsubsection{}
Let us first note that for any $V \in \Rep_{\Zp}(T^c)$ and any $j$, $\rho := \omega_{et}(V)_j[1/p]: \Gamma_{E_j} \rightarrow GL(V_{\Qp})$ is a crystalline representation. Indeed, this is well-known and follows by combining Deligne's definition of a canonical model of a special Shimura variety (e.g. \cite[2.2]{del2}) with a direct computation showing that unramified Hecke characters yield crystalline Galois representations (e.g. \cite[1.6]{far1}).

Therefore we may apply Fontaine's $D_{crys}$ functor to the image of $\omega_{et}[1/p]$ to obtain a filtered $F$-isocrystal with $G$-structure
$$\omega_{crys,\eta}: \Rep_{\Qp}(T^c) \rightarrow \FCrys_{\cS_{K}/W}[1/p].$$
The CPLF condition then forces us to take inside this the lattice defined by
$$\Rep_{\Zp}(T^c) \ni V \mapsto \omega_{crys}(V) := \phi^*\fM(\omega_{et}(V))/u \subset \omega_{crys,\eta}(V[1/p]).$$

To show that this gives a crystalline canonical model it remains to check the following.

\begin{prop}
\label{special strong}
Assume $p>2$. With notation as above, $\omega_{crys}(V) \subset \omega_{crys,\eta}(V[1/p])$ is flatly filtered and a strongly divisible lattice.
\end{prop}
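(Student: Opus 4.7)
The plan is to reduce to the case of a single character of $T^c$ via a faithfully flat base change argument, then observe that for a 1-dimensional crystalline representation, both conditions collapse to weak admissibility of the associated filtered $\phi$-module.

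\textbf{Reduction to a character.} Fix a component $\Spf \cO_{E_j}$ of $\cS_K$, and choose a finite unramified extension $W'/\cO_{E_j}$ splitting $T^c$. Both flatness and strong divisibility are preserved by, and detected by, faithfully flat base change, so it suffices to verify them over $\Spf W'$. Over $W'$ the split torus $T^c_{W'} \cong \Gm^r_{W'}$ has only 1-dimensional irreducible representations, so $V\otimes_{\Zp} W'$ decomposes canonically as a direct sum of characters $\bigoplus_i \chi_i$. Using the additivity of $\fM$ and its compatibility with unramified base change (Theorem \ref{kismod}), this decomposition propagates:
$$\omega_{crys}(V)\otimes_{\cO_{E_j}} W' \;\cong\; \bigoplus_i \omega_{crys}(\chi_i),$$
with filtration and Frobenius acting block-diagonally. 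Since both properties we wish to check are stable under direct sums, we are reduced to the case where $V = \chi$ is a single character.

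\textbf{The one-dimensional case.} Write $L := \omega_{crys}(\chi) = \phi^*\fM(\rho)/u$ for the rank-1 $W'$-lattice in $D := D_{crys}(\rho[1/p])$, where $\rho := \omega_{et}(\chi)$ is a crystalline character (as noted at the beginning of \S\ref{3.2}) with some Hodge--Tate weight $n$. The filtration on $D$ has a single jump at $n$, so the induced filtration on $L$ gives associated graded equal to $L$ placed in degree $n$, a rank-1 free $W'$-module: flatness is automatic. For strong divisibility, the condition $\phi\bigl(\sum_i p^{-i}\sigma^*\Fil^i L\bigr) = L$ simplifies in the rank-1 case to $\phi(\sigma^*L) = p^n L$, which is equivalent to the Frobenius eigenvalue $\lambda$ (whose $v_p$ is independent of the choice of basis of $L$) satisfying $v_p(\lambda) = n$. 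But this is exactly the weak admissibility of the 1-dimensional filtered $\phi$-module $D$, which holds automatically because $\rho[1/p]$ is crystalline.

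\textbf{Main obstacle and the role of $p>2$.} The main technical point is verifying the base-change/decomposition identity for $\omega_{crys}$; this relies on Kisin's functor $\fM$ being additive and compatible with unramified base change, precisely where the hypothesis $p>2$ enters via Theorem \ref{kismod} (and more implicitly Proposition \ref{FL equality}). The rank-1 verification itself is robust and does not use $p>2$; this is why the special-type case is comparatively soft, and it is the passage to Hodge- and abelian-type data which will require the heavier machinery developed later in the paper.
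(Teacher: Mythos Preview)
Your argument is essentially correct but follows a genuinely different route from the paper. The paper first reduces via functoriality to the particular torus $(E^*, h_E)$, then checks the claim on the single faithful representation $\cO_v^* \hookrightarrow GL_{\Zp}(\cO_v)$, whose Hodge--Tate weights are $0$ and $1$; since $p>2$ this lands in the Fontaine--Laffaille range and (\ref{FL equality}) identifies the Kisin lattice with the Fontaine--Laffaille lattice, which is strongly divisible by construction. You instead split $T^c$ by an unramified base change and observe that in rank one, strong divisibility of \emph{any} lattice is equivalent to weak admissibility of the ambient filtered $\phi$-module, which holds because the representation is crystalline.

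Two remarks. First, writing $\omega_{crys}(\chi_i)$ is a slight abuse since the $\chi_i$ are representations of $T^c_{W'}$ rather than of $T^c_{\Zp}$; to make this precise you should either pass to the $W'$-linear extension of $\omega_{crys}$ as in Lemma~\ref{silly trick}, or transport the $T^c$-equivariant weight-space idempotents in $\operatorname{End}(V)\otimes W'$ through the $\Zp$-linear functors $\omega_{et}$ and $\fM$. Either way this is routine bookkeeping. Second, and more interestingly, your ``main obstacle'' paragraph misidentifies where $p>2$ enters: Theorem~\ref{kismod} carries no such hypothesis, and your argument never actually invokes (\ref{FL equality}). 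Your rank-one check via weak admissibility is insensitive to $p$, so your proof in fact establishes the proposition for $p=2$ as well --- precisely the gap the paper flags in its own footnote. This is a genuine advantage of your approach over the paper's, which is tied to the Fontaine--Laffaille range and hence to $p>2$.
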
 
\begin{proof}
First note by functoriality that we may reduce to the case where $T=E^*$ and $h=h_E$ is the map defined as follows. Let $\tau_0:E \subset \C$ be the canonical embedding. If $\tau$ is real we let
$$\mu: \bS \rightarrow E^*_{\R} = \prod_{\tau} E_\tau^*$$
be defined by 
$$\mu(z) = (z\bar{z},1,\dots,1)$$
with the nonzero entry in place $\tau_0$. If $\tau_0$ is complex we let
$$\mu(z) = (z,1,\dots,1).$$
With this definition it is easy to check that we get a canonical map of Shimura data induced by $N_{E/\Q} \circ \mu$
$$(E^*,h_E) \rightarrow (T,h),$$
pulling back along which we obtain the reduction required.

Since the result is local and may be checked after passing to an unramified extension of the base, it will suffice to check that the Kisin modules obtained from the Galois representations of the form
$$\Gal(E_v^{ab}/E_{v}^{ur}) = \cO_{v}^* \map{\rho} GL(V)$$
give rise to flatly filtered strongly divisible lattices, where $(V,\rho)$ is an algebraic representation of $\bG_{m,\cO_{v}}$ 
and where we recall that $E_v/\Qp$ is unramified. Since the functors involved are $\otimes$-functors it suffices to check on a faithful representation.

Thus to finish, we note that $\cO_{v}^* \hookrightarrow GL_{\Zp}(\cO_v)$ is a faithful representation and that the Hodge-Tate weights of the associated Galois representation are $0$ and $1$, so since $p>2$ we may apply the theory of Fontaine-Laffaille and (\ref{FL equality}) to deduce the result.

\end{proof}

\subsection{Hodge type case}
\label{hodge case}
In this section we review Kisin's construction \cite[\S2]{kis2} of integral canonical models for Shimura varieties in the Hodge type case and show that it can be extended to give a construction of crystalline canonical models. The argument is essentially the same as that of \cite[\S4.5]{tl} but with less clutter and we give a full account for the reader's convenience.

\subsubsection{}
\label{3.3.1}
We recall the setup (and some of the results) of \cite[\S2.3]{kis2}. Let $(G,\fX)$ be a Hodge type Shimura datum with $G$ unramified at $p$ and fix a reductive integral model $G_{\Zp}/\Zp$ of $G_{\Qp}$. We may arrange a symplectic embedding $i: G \hookrightarrow GL(V)$ and a $\Z$-lattice $V_{\Z} \subset V$ such that the closure of the image of $i$ in $GL(V_{\Z_{(p)}})$ is a reductive subgroup $G_{\Z_{(p)}}$ whose base change to $\Zp$ agrees with $G_{\Zp}$. 

We now abuse notation and let $G$ denote this reductive $\Z_{(p)}$ group and $V := V_{\Z_{(p)}}$ or any base change thereof, provided the intended base is clear from context and no confusion will result. We also \cite[1.3.2]{kis2} fix a finite collection of tensors $s_\alpha \in V_{\Z_{(p)}}^\otimes$ such that that $G_{\Z_{(p)}} = GL_{s_\alpha}(V_{\Z_{(p)}})$, fix $v|p$ a place of $E=E(G,\fX)$, and (noting that $E$ is absolutely unramified at $p$ in such a context) let $W = W(k_v) = \cO_v$ with field of fractions $E_v$. We also remark that it is easy to see $G=G^c$ in this case.

Taking $K_p = G(\Zp)$ and $K'_p = \{g \in GSp_{\Qp}(V) | g(V_{\Zp}) = V_{\Zp}\}$ and $K^p \subset G(\A^{\infty,p})$ arbitrary open compact sufficiently small we may find $K' = K'^pK'_p$ such that $i$ induces a closed immersion of the Shimura variety with level $K = K_p K^p$ into a Siegel Shimura variety
$$\Sh_K(G,\fX) \hookrightarrow \Sh_{K'}(GSp,\bS^\pm).$$

The right hand side has a natural integral model $\cS_{K'}$ that is a moduli space of abelian varieties. Recall that the integral canonical model for the left hand side $\cS_{K}$ is defined as the normalisation of the closure 
$$\cS_K \map{\nu} \overline{\Sh_K} \subset \cS_{K'}$$
and is proved to be smooth over $W$.

Pulling back the universal abelian variety $\cA_{K'} \rightarrow \cS_{K'}$ we obtain an abelian variety $\cA_{K}/\cS_K$, and we may form its relative de Rham cohomology sheaf $$\cV = \cH^1_{dR}(\cA_K/\cS_K)$$
as a vector bundle on $\cS_K$ with Gauss-Manin connection and flat Hodge filtration satisfying Griffiths transversality. By \cite[2.3.9]{kis2} the absolute Hodge cycles $s_{\alpha,dR} \in \cV_{E_v}^\otimes$ extend to sections of $\cV^\otimes$. 

\subsubsection{}

We make some additional remarks in preparation for our construction. Combining Liu and Zhu's construction via \cite[3.9 (iv)]{lz} with a relative $p$-adic comparison result \cite[1.10]{sch}, and letting $\omega_{dR}$ be as in (\ref{3.1.3}) we see that there is a canonical identification $$\gamma: \cV_{E_v}^{an} \rightiso \omega_{dR}(V).$$ Moreover, $\gamma$ takes $s_{\alpha,dR}$ to $\omega_{dR}(s_{\alpha})$, as can be seen by looking at the generic point of each component and using the fact \cite{blasius} that absolute Hodge cycles in abelian motives are de Rham, i.e. exchanged under the $p$-adic comparison theorems.

Furthermore, since $\cS_K$ is a smooth $W$-scheme, after $p$-adically completing we may identify (as modules with connection) $$\hat{\cV} = \hat{\cH}^1_{dR}(\cA_K/\cS_K) \cong \cH^1_{crys}(\cA_K \otimes \kappa_v/W).$$
For any choices $(U_\beta,\sigma_\beta)$ of local Frobenius lift on $\cS_K$, we obtain a horizontal Frobenius structure on $\hat{\cV}$, and since $p>2$ the theory of Fontaine-Laffaille-Faltings implies that with the connection and filtration on the de Rham side, this is strongly divisible.

Our aim for the rest of the section will be to prove the following.

\begin{thm}
\label{main 3.3}
The functors (as $K^p$ varies)
$$\cP_K := \uIsom_{s_\alpha}(V, \hat{\cV})$$
together with connection, Frobenius and Rees structure induced from those on $\hat{\cV}$ define (for all choices $\{(U_\beta,\sigma_\beta)\}$ of local Frobenius lift) a $G(\Afp)$-equivariant filtered $F$-crystal with $G$-structure and, under the identification

$$\gamma_*: \cP_{K}[1/p] = \uIsom_{s_\alpha}(V_{E_v}, \cV_{E_v}^{an}) \rightiso P_{K_pK^p}$$  induced by $\gamma$ form the crystalline canonical model for $(G,\fX)$ at the place $v$.
\end{thm}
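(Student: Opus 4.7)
The plan is to proceed in four stages: (a) produce the bundle $\cP_K$ with connection, Rees and Frobenius structure; (b) check strong divisibility; (c) identify the rigid generic fibre with $P_{K_pK^p}$; (d) verify the CPLF conditions at crystalline points.

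For (a), Kisin's analysis \cite[2.3.9]{kis2} shows the Hodge cycles $s_{\alpha,dR}$ extend integrally as sections of $\hat{\cV}^\otimes$ and that $\cP_K=\uIsom_{s_\alpha}(V,\hat{\cV})$ is a $G$-torsor. Since the $s_{\alpha,dR}$ are absolute Hodge, they are horizontal for the Gauss--Manin connection and, via Faltings' comparison applied fibrewise to the abelian scheme, also Frobenius invariant after inverting $p$; they plainly sit in $\Fil^0$. The Hodge filtration on $\hat{\cV}$ is locally split with locally free graded pieces (it is the filtration with quotient $e^*\Omega^1_{\cA_K/\cS_K}$), so via (\ref{rees equivalence}) and (\ref{2.2.7}) it gives a Rees structure on $\cP_K$ and Griffiths transversality is inherited. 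The equivariance for $G(\Afp)$ follows formally from the Hecke action on the tower of abelian schemes.

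For (b), since $\cA_K/\cS_K$ is an abelian scheme with Hodge--Tate weights in $[0,1]$ and $p>2$, the standard representation $\hat{\cV}$ is a strongly divisible $F$-crystal in $\FCrys^{[0,p-2]}_{\cS_K/W}$. By (\ref{silly trick}) and (\ref{2.3.4}), together with the strong divisibility criterion (\ref{sd criterion}) applied at Teichm\"uller points of the local Frobenius lifts (at which the fibre reduces to a single abelian variety over $W'$, where Fontaine--Laffaille--Faltings applies), the fibre functor $\omega_{\cP_K}$ produces genuine (not merely weak) filtered $F$-crystals for every $\rho \in \Rep_\Zp(G)$. For (c), the Liu--Zhu identification of \cite[3.9 (iv)]{lz} combined with the relative de Rham comparison \cite{sch} and with Blasius' theorem \cite{blasius} (applied at the generic point of each component of $\Sh_{K_pK^p}$ to match absolute Hodge cycles with their $p$-adic realisations) gives $\gamma_*:\cP_K[1/p]\rightiso P_{K_pK^p}$ intertwining connection and filtration, $G_{E_v}$- and Hecke-equivariantly.

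For (d), fix an unramified crystalline point $x\in\cS_K(W')$ pulling back the universal abelian scheme to $\cA_x/W'$. The Frobenius condition on the faithful representation $V$ is exactly Faltings' comparison for the abelian variety, identifying the crystalline Frobenius on $H^1_{crys}(\cA_x/W')$ with the Frobenius on $D_{crys}(H^1_{\acute{e}t}(\cA_{x,\bar K},\Qp))$. The lattice condition reduces, via Kisin's theory, to the identity $\phi^*\fM(H^1_{\acute{e}t}(\cA_{x,\bar K},\Zp))/u = H^1_{crys}(\cA_x/W')$, which is precisely the $GL$-version of what we need. To pass from $V$ to an arbitrary $W\in\Rep_\Zp(G)$ we argue as in the last part of the proof of (\ref{2.6.5}): write $W$ as the image of an idempotent $e$ on some tensor construction $V^T$, use that $\omega_{crys}$, $\fM$ and $\eta := \phi^*\fM(\omega_{et}(-))/u$ are all tensor functors, and that $\eta(e)[1/p]$ agrees with $x^*\omega_{crys}(e)[1/p]$ on generic fibres, hence cuts out equal lattices on both sides. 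Uniqueness across components of $\cS_K$ then follows from the existence of one crystalline point per component (Kisin \cite[2.2.4]{kis2}) together with (\ref{lattices on points}).

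The main obstacle is the L condition: one needs that the $\fS$-module recipe applied to $H^1_{\acute{e}t}(\cA_{x,\bar K},\Zp)$ reproduces integral crystalline cohomology of $\cA_x$, and must then lift this statement through the Tannakian formalism to arbitrary $G$-representations without losing the integrality. Both ingredients are already present in the literature (\ref{kismod}, \ref{FL equality}, \ref{lattices on points}), so the work is really to arrange them coherently rather than to prove something genuinely new.
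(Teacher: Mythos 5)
Your overall outline---torsor with extra structure, strong divisibility, generic-fibre comparison, CPLF at crystalline points---matches the shape of the paper's proof, and parts (c) and (d) are essentially correct. However, there is a genuine gap at the start of part (a).

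You assert that $\cP_K=\uIsom_{s_\alpha}(V,\hat{\cV})$ is a $G$-torsor, citing \cite[2.3.9]{kis2}. That reference only shows that the de Rham tensors $s_{\alpha,dR}$ extend to integral sections of $\hat{\cV}^\otimes$; it does \emph{not} give local trivialisations of the sheaf of tensor-preserving isomorphisms. The existence of fpqc-local sections is the main technical content of the theorem, and the paper devotes the bulk of its argument to it: reviewing Faltings' versal deformation model for $p$-divisible groups (\ref{3.3.6}), establishing via (\ref{crystensors}) that there exist non-canonical tensor-exchanging isomorphisms $V\otimes_{\Zp}W(\kappa)\rightiso M_0$ and that the mod $p$ filtration is cut out by a $G$-valued cocharacter, constructing the adapted deformation space $\Spf R_{G(\tilde x)}$ (\ref{3.3.10}), and then proving (\ref{formalid}) and (\ref{formalcor}) that the formal neighbourhood $\hat N_x$ is canonically identified with $\Spf R_{G(\tilde x)}$ in a tensor-compatible way. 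Only after this chain of results can one write down a section of $\cP_K$ over $\hat N_x$. Note the paper says explicitly that it ``set[s] out to show that $\cP_K$ admits a section in an fpqc neighbourhood,'' and that (\ref{formalid}) is proved ``following the argument of \cite[2.3.5]{kis2}''; that is, even the relevant Kisin input needs to be re-proved and strengthened, it cannot just be quoted. A proof proposal that leaps over this is missing the central lemma.

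Two smaller points. For strong divisibility you appeal to (\ref{sd criterion}) at Teichm\"uller points; this is a valid alternative route (and mirrors the general remark made after (\ref{silly trick}) about pullbacks), but the paper gives a more direct argument: since $\hat{\cV}$ is strongly divisible as the crystalline cohomology of an abelian scheme, the Frobenius already induces an integral isomorphism $\Rees(\sigma_\beta^*\hat{\cV})/(t-p)\rightiso\hat{\cV}|_{U_\beta}$ respecting the tensors, and this propagates to the torsor and thus to every $\rho\in\Rep_{\Zp}(G)$ without any pointwise verification. Also, your citation of (\ref{silly trick}) and (\ref{2.3.4}) in the strong-divisibility step is mis-aimed: (\ref{silly trick}) is a Deligne--Milne restriction-of-scalars device for factoring fibre functors and (\ref{2.3.4}) concerns Griffiths transversality; neither bears on divisibility of lattices. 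Finally, the paper does not take Frobenius-invariance of the $s_{\alpha,dR}$ for granted ``via Faltings' comparison applied fibrewise'' as you do: it reduces to one point per component using horizontality, and then invokes the pointwise statement (\ref{crystensors}) obtained from the $p$-adic comparison for the $p$-divisible group---again relying on the formal deformation analysis you skipped.
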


\subsubsection{}
Note that whenever we write something like $\uIsom_{s_\alpha}(V,\hat{\cV})$ we mean the sheaf of trivialisations $t: V_U \rightiso \hat{\cV}_U$ such that under $t^\otimes: V_U^\otimes \rightiso \hat{\cV}_U^\otimes$ each $s_\alpha$ is identified with the corresponding $s_{\alpha,dR}$ (or some other tensor labelled by $\alpha$ and in practice no confusion will arise). Also note that whenever such trivialisations exist locally, then $\uIsom_{s_\alpha}(V,\hat{\cV})$ becomes locally isomorphic to $\underline{\Aut}_{s_\alpha}(V) = G$. Since $G$ is smooth and affine, ``locally'' can be taken to mean fpqc, fppf or \'etale, the existence of \'etale local sections will be automatic by fpqc descent, and the presence of local sections is enough to conclude that $\uIsom_{s_\alpha}(V,\hat{\cV})$ is a $G$-bundle.

\subsubsection{}
We therefore first set out to show that $\cP_K$ admits a section in an fpqc neighbourhood of any (characteristic $p$) point $x \in \hat{\cS}_K$. In particular it will suffice to construct a section over the formal neighbourhood $\hat{N}_x = \Spf \hat{\cO}_{S_K,x}$ of $x$. To do this we need to recall more of the setup of Kisin's paper. We begin by quickly reviewing important aspects of Faltings' model for such a formal neighbourhood \cite[\S1.5]{kis2}.

\subsubsection{}
\label{3.3.6}
Suppose we are given a $p$-divisible group $\cG_0$ over a finite field $\kappa/k_v$. This has a (contravariant) Dieudonn\'e crystal $M_0=\D(\cG_0)(W(\kappa))$, with a Frobenius $\phi: \sigma^*M_0 \rightarrow M_0$ and an induced mod $p$ filtration $\overline{\Fil}^1 \subset M_0\otimes_{W(\kappa)} \kappa$. Since $k$ is a field, we may suppose this filtration is given by a cocharacter $\mu_0:\Gm \rightarrow GL(M_0\otimes \kappa)$. We can choose a lift $\mu:\Gm \rightarrow GL(M_0)$ and let $U$ denote the opposite unipotent to the parabolic defined by $\mu$. Completing along the identity section, we get an affine formal scheme $\Spf R$ that is noncanonically isomorphic to a power series ring $R \cong W(\kappa)[[t_1,...,t_n]]$. Making such a choice of co-ordinates, we can define a lift of Frobenius by $\sigma: t_i \mapsto t_i^p$. 

With this setup in place, we let $M=M_0 \otimes_{W(\kappa)} R$ with the (constant) filtration induced by $\mu$ and a semilinear Frobenius given by $$\phi: \sigma^*M \map{\phi \otimes \sigma} M \map{u} M,$$
where $u \in U(R) \subset GL(M)$ is the tautological $R$-point of $U$. 

\begin{thm} 
\label{faltings defo}
The module $M$ admits a unique integrable connection $\nabla: M \rightarrow M \otimes_R \Omega^1_{R/W(\kappa)}$ giving $M$ the structure of a filtered $F$-crystal. There is a $p$-divisible group $\cG_R$ over $R$, with a canonical identification of filtered $F$-crystals
$$\D(\cG_R)(R) \cong M.$$
Moreover, $\cG_R/\Spf R$ is a versal deformation of $\cG_0$.
\end{thm}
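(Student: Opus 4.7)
The plan has three stages: first construct the integrable connection on $M$ from Frobenius rigidity; second produce $\cG_R$ via Grothendieck--Messing/Dieudonn\'e theory; third verify versality by a tangent space count.

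For the connection: horizontality of $\phi$, namely $\nabla \circ \phi = (\phi \otimes d\sigma) \circ \sigma^*\nabla$, uniquely pins down $\nabla$. Writing $\nabla = d + \Gamma$ in the obvious trivialisation $M = M_0 \otimes_{W(\kappa)} R$ (where $d$ is the trivial connection $m_0 \otimes r \mapsto m_0 \otimes dr$), this rearranges into a fixed-point equation $\Gamma = T(\Gamma)$, with $T$ built out of $\sigma^*\Gamma$, the tautological section $u \in U(R)$, its derivative $du$, and $\phi_0$. Because $d\sigma$ sends $dt_i$ to $pt_i^{p-1}dt_i$, applying $T$ increases both the $p$-adic and $(t_1,\dots,t_n)$-adic valuations, so $T$ is contractive on $\operatorname{End}(M_0) \otimes \Omega^1_{R/W(\kappa)}$ and Banach's fixed-point theorem produces a unique $\Gamma$. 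Integrability $\nabla^2 = 0$ follows by the same rigidity: the curvature satisfies its own contractive equation forced by compatibility with Frobenius, and zero is obviously a fixed point. Griffiths transversality is automatic since the Hodge filtration of a $p$-divisible group has only $\Fil^0 = M$ and $\Fil^1$, while strong divisibility of $\phi$ is built into its definition $u \circ (\phi_0 \otimes \sigma)$: $u$ is a unit and $\phi_0$ is strongly divisible.

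For $\cG_R$: invoke the Berthelot--Breen--Messing equivalence between $p$-divisible groups over $R$ and filtered Dieudonn\'e crystals (equivalently, apply Grothendieck--Messing deformation theory along the canonical divided-power thickenings $R/(p,t_1,\dots,t_n)^{n+1} \twoheadrightarrow R/(p,t_1,\dots,t_n)^n$). The datum $(M,\Fil^1,\nabla,\phi)$ then feeds through to a unique $p$-divisible group $\cG_R$ lifting $\cG_0$ with tautological $\D(\cG_R)(R) \cong M$. For versality, compare tangent spaces at the closed point: Grothendieck--Messing identifies $T_0 \operatorname{Def}(\cG_0) \cong \operatorname{Hom}_\kappa(\Fil^1 \bar{M}_0, \bar{M}_0/\Fil^1) \cong \Lie U$, matched with $T_0(\Spf R) = \Lie U$ via our coordinates, and the induced map is the identity by construction of $\phi$ through the tautological $u$.

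The main obstacle is the contraction argument for $\nabla$: one must track the interplay between the $p$-adic and $(t_i)$-adic topologies carefully, using $p$-divisibility of $d\sigma$ for the former and the $t_i \mapsto t_i^p$ shape of $\sigma$ for the latter, and verify that successive approximations converge to a module with $p$-integral (rather than merely $\Qp$-valued) Christoffel symbols. The Berthelot--Breen--Messing/Grothendieck--Messing correspondence is the other substantive input; once invoked, the remaining versality check reduces to identifying both tangent spaces with $\Lie U$.
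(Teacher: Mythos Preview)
Your proposal is correct and matches the approach the paper invokes: the paper simply cites Moonen \cite[4.5]{moon}, who in turn uses exactly Faltings' contraction argument for the connection and the de Jong/Grothendieck--Messing input for producing $\cG_R$ and checking versality. Your sketch unpacks this rather than just citing it, but the route is the same; one small terminological point is that the equivalence you attribute to Berthelot--Breen--Messing (essential surjectivity over $R$) is really de Jong's theorem, though your alternative via iterated Grothendieck--Messing lifting along $R/(p,t_i)^{n+1} \twoheadrightarrow R/(p,t_i)^n$ is self-contained and avoids this.
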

\begin{proof}
This is all done in Moonen \cite[4.5]{moon}, using a result of Faltings for the existence of the connection, and the theorems of de Jong and Grothendieck-Messing on essential surjectivity of Dieudonn\'e and ``admissible filtration'' functors for existence of the $p$-divisible group. 
\end{proof}

\subsubsection{}
Let $\cG$ be the lift of $\cG_0$ to $W(\kappa)$ corresponding to $\mu$ under Grothendieck-Messing (equivalently that obtained by pulling back $\cG_R$ along the zero section over $W(\kappa)$). Let $K=W(\kappa)[1/p]$ and suppose we are given an identification
$$T_p(\cG_{K})^* \cong V_{\Zp}$$
of the dual Tate module of (the generic fibre of) $\cG$ with the lattice $V_{\Zp}$. This equips this Tate module with tensors $s_\alpha$, and we assume further that these are $\Gamma_K$-invariant.

\begin{prop}
\label{crystensors}
The $p$-adic comparison isomorphism
$$M_0 \otimes_{W(\kappa)} B_{crys} \rightiso T_p(\cG_{K})^* \otimes_{\Zp} B_{crys}$$
identifies $s_\alpha$ with $\phi$-invariant tensors $s_{\alpha,0}$ in $Fil^0$ of $M_0^\otimes$.

Moreover, there exist (non-canonical) isomorphisms
$$V_{\Zp} \otimes_{\Zp} W(\kappa) \rightiso M_0$$
exchanging these tensors, and the mod $p$ filtration on $M_0 \otimes_{W(\kappa)} \kappa$ is induced by a cocharacter
$$\mu_0: \G_{m,\kappa} \rightarrow G_\kappa.$$
\end{prop}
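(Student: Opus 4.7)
The plan is to establish the three assertions in sequence.

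For the first claim, since $V_{\Zp} = T_p(\cG_K)^*$ is a crystalline $\Gamma_K$-representation by Fontaine--Messing, the $\Gamma_K$-invariance of each $s_\alpha$ translates via $D_{crys}$ into a $\phi$-fixed tensor in $\Fil^0(M_0[1/p])^\otimes$; the filtration condition is automatic since the Hodge--Tate weights of $V$ lie in $\{0,1\}$, so $\Fil^0 M_0 = M_0$. To upgrade this to the integrality statement $s_{\alpha,0} \in M_0^\otimes$ I would invoke Kisin's $\fS$-module functor (\ref{kismod}): the module $\fM := \fM(V_{\Zp})$ inherits tensors from $V_{\Zp}^\otimes$ by functoriality, and the identification $M_0 \cong \phi^*\fM/u$ used in the proof of (\ref{FL equality}) propagates them to integral tensors in $M_0^\otimes$ whose images under comparison recover the $s_\alpha$.

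For the second claim, consider the $W(\kappa)$-scheme
$$\cI := \uIsom_{s_\alpha}(V_{\Zp} \otimes_{\Zp} W(\kappa), M_0)$$
of tensor-preserving trivializations, a closed subscheme of $\uIsom(V_{\Zp} \otimes W(\kappa), M_0) \cong GL_{n, W(\kappa)}$ on which $G_{W(\kappa)} = \underline{\Aut}_{s_\alpha}(V_{\Zp} \otimes W(\kappa))$ acts freely. The integral tensor comparison from the first step, together with integral $p$-adic Hodge theory for $p$-divisible groups (following Faltings), furnishes a section of $\cI$ after base change to a faithfully flat crystalline period ring, so $\cI$ is an fpqc $G_{W(\kappa)}$-torsor. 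Since $G$ is smooth reductive over $\Zp$ and $\kappa$ is finite, Lang's theorem for $G_\kappa$ combined with smoothness of $G$ and Hensel's lemma yields $H^1_{\mathrm{fpqc}}(\Spec W(\kappa), G_{W(\kappa)}) = 0$, hence $\cI$ admits a $W(\kappa)$-section, furnishing the sought (non-canonical) isomorphism.

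For the third claim, fix such an isomorphism and transport the Hodge filtration $\Fil^1 \subset M_0$ (arising from the lift $\cG$) to a filtration on $V_{\Zp} \otimes W(\kappa)$. On the generic fibre the stabilizer $P_K \subset GL(V)_K$ preserves every $s_\alpha$ by construction of the crystalline tensors, so $P_K$ is a parabolic of $G_K$ in the conjugacy class attached to the Hodge--Tate cocharacter of $V$. Because $\Par_G \hookrightarrow \Par_{GL(V)}$ is a closed immersion of flat $\Zp$-schemes, the $W(\kappa)$-point classifying this stabilizer factors through $\Par_G$, so its special fibre $\overline{P}$ is a parabolic of $G_\kappa$ stabilizing $\overline{\Fil}^1$. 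Picking a Levi of $\overline{P}$ over the perfect field $\kappa$ together with a cocharacter of its centre producing the correct weight decomposition on $V_\kappa$ then gives the desired $\mu_0: \G_{m,\kappa} \to G_\kappa$.

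I expect the principal obstacle to be the integrality statement in the first step: the existence of crystalline tensors after inverting $p$ is routine, but descending to a $W(\kappa)$-lattice requires the full strength of Kisin's $\fS$-module machinery and implicitly uses $p > 2$.
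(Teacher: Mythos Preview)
The paper's own proof is a one-line citation: ``This is an immediate application of \cite[1.3.6 (1)]{kis2} and \cite[1.4.3]{kis2}.'' Your proposal is essentially a reconstruction of the content of those two results of Kisin, and for the first two claims your outline is correct and matches Kisin's method: the integrality of the $s_{\alpha,0}$ is obtained via the $\fS$-module functor, and the existence of a tensor-preserving trivialization over $W(\kappa)$ follows because the relevant $\uIsom$ scheme is a $G$-torsor which is trivial by Lang plus Hensel.

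Your argument for the third claim, however, has a genuine gap. There is no closed immersion $\Par_G \hookrightarrow \Par_{GL(V)}$: a parabolic subgroup of $G$ does not in general determine a parabolic subgroup of $GL(V)$, so the step ``the $W(\kappa)$-point classifying this stabilizer factors through $\Par_G$'' has no meaning as stated. What you do know is that the generic-fibre filtration is induced by a cocharacter of $G_K$, but this alone does not force the special-fibre filtration to be induced by a cocharacter of $G_\kappa$; the integral filtration on $V \otimes W(\kappa)$ can degenerate in ways not visible on $K$-points. Kisin's actual argument in \cite[1.4.3]{kis2} avoids this by working directly with the Frobenius on the $\fS$-module: once one knows $\uIsom_{s_\alpha}(V \otimes \fS, \fM)$ is a (trivial) $G_\fS$-torsor, the map $1\otimes\phi:\phi^*\fM \to \fM$ becomes, after trivializing, an element of $G(\fS[1/E(u)])$, and the filtration on $\fM/u\fM \otimes \kappa$ is read off from the position of this element relative to $G(\fS)$, which forces it to come from a $G_\kappa$-cocharacter. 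If you want to salvage your parabolic approach, you would need to replace $\Par_G \hookrightarrow \Par_{GL(V)}$ by the closed immersion of the appropriate $G$-flag variety $G/Q_\mu$ into the $GL(V)$-Grassmannian and argue flatness there, which is more delicate.
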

\begin{proof}
This is an immediate application of \cite[1.3.6 (1)]{kis2} and \cite[1.4.3]{kis2}.
\end{proof}

\subsubsection{}
\label{3.3.10}
Given this, we now restrict our attention to those deformations of $\cG_0$ which ``respect the tensors''. Given we could choose the lift $\mu$ arbitrarily, we may assume it factors though a $G$-valued lift $\mu: {\Gm}_{,W(\kappa)} \rightarrow G_{W(\kappa)}$ by the smoothness result in [SGA III, XI 4.2]. Using this to produce an opposite unipotent $U_G$, and completing along the identity we obtain a formally smooth closed sub-formal scheme $z: \Spf R_G \hookrightarrow \Spf R$, on which we also fix a Frobenius lift $\sigma_{R_G}$ compatible with that on $R$. We can restrict $\cG_R$ to obtain $\cG_{R_G}$, and the identification of Dieudonn\'e modules (\ref{faltings defo}) gives rise to a canonical identification 
$$M_G := M_0 \otimes R_G = M \otimes_R R_G \rightiso \D(\cG_{R_G})(R_G).$$

\subsubsection{}
We now, following \cite[\S2.3]{kis2}, relate this discussion back to $\hat{N}_x = \Spf \hat{\cO}_{S_K,x}$. Let $\kappa = \kappa(x)$ and let $x' \in \cS_{K'}$ be the image of $x$, also viewed as a $\kappa$-point. We will study the formal neighbourhood of $x'$ in the closed subscheme $\overline{\Sh_K} \subset \cS_{K'}$ $$\hat{U}_{x'} = \Spf(\hat{\cO}_{\overline{\Sh_{K}},x'}).$$

Since $\overline{\Sh}_K \subset \cS'_{K'}$, we can use the universal abelian variety to associate to $x'$ a $p$-divisible group $\cG_0/\kappa$ and a deformation $\cG/\hat{U}_{x'}$ of this to the formal neighbourhood. Let $R$ be the versal deformation ring constructed above (\ref{3.3.6}), taking as input the $p$-divisible group $\cG_0$. We may fix a map $g:\hat{U}_{x'} \hookrightarrow \Spf R$ giving rise to $\cG$, injective by the Serre-Tate theorem and the fact that polarisations and prime to $p$ level structures lift uniquely through nilpotents.

Now suppose $\tilde{x} \in \Sh_K(F)$ (for $F/E$ a finite extension) is a point in characteristic zero specialising to $x$ inside $\cS_K$. Then using (\ref{crystensors}), the $G_F$-invariant tensors $s_{\alpha, et, \tilde{x}}$ on $H^1_{et}(\cA_{\tilde{x},\bar{K}}, \Zp)$ correspond to $\phi$-invariant $\Fil^0$ tensors $s_{\alpha, crys, \tilde{x}} \in \D(\cG_0)(W(\kappa))^\otimes$, which define a closed $W(\kappa)$-group subscheme $G(\tilde{x}) \subset GL(\D(\cG_0)(W(\kappa)))$ that is isomorphic to $G_{W(\kappa)}$ canonically up to inner automorphisms, and has the property that the filtration on $\D(\cG_0)(\kappa)$ is induced by a cocharacter $\mu_0: \Gm \rightarrow G(\tilde{x})_\kappa$. As described above (\ref{3.3.10}), this gives rise to a canonical closed formally smooth formal subscheme of the versal deformation space $\Spf R_{G(\tilde{x})} \subset \Spf R$.

Following the argument of \cite[2.3.5]{kis2}, we prove this in fact gives a model for the formal neighbourhood $\hat{N}_x$ of $x$ in $\cS_K$.
\begin{thm}
\label{formalid}
Let $\tilde{x}$ be as above, and let $Z \subset \hat{U}_{x'}$ be the irreducible component containing the image of $\tilde{x}$. Then $g|_Z$ factors through $\Spf R_{G(\tilde{x})}$ and in fact induces an isomorphism
$$ g|_Z: Z \rightiso \Spf R_{G(\tilde{x})}.$$
Moreover, this isomorphism extends to an isomorphism $\cG|_{Z} \rightiso \cG_{G(\tilde{x})}$ of $p$-divisible groups, and the induced isomorphism of Dieudonn\'e modules identifies the tensors on $\hat{\cV}|_{Z}^\otimes$ with the tensors on $\D(\cG_{G(\tilde{x})})(R_{G(\tilde{x})})^\otimes$.
\end{thm}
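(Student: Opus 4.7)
The plan is to follow the strategy of Kisin \cite[2.3.5]{kis2}, using the fact that the absolute Hodge tensors $s_{\alpha,dR} \in \cV^\otimes$ extend horizontally across the formal neighbourhood of $x$, and that at $\tilde{x}$ they specialise (via crystalline comparison) to the tensors $s_{\alpha,crys,\tilde{x}}$ that define the $W(\kappa)$-group $G(\tilde{x})$. The idea is to translate horizontality plus $\phi$-invariance of these tensors into the statement that the tautological coordinate parametrising deformations lands in the subgroup $U_{G(\tilde{x})}$ of the opposite unipotent $U$, which cuts out exactly $\Spf R_{G(\tilde{x})} \subset \Spf R$.

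First, I would recall that $s_{\alpha,dR}$ is horizontal for the Gauss--Manin connection on $\cV^\otimes$, and hence on $\hat{\cV}^\otimes$ via the identification with $\cH^1_{crys}(\cA_K \otimes \kappa_v/W)^\otimes$ noted in \ref{3.3.1}. After pulling back to $\hat{U}_{x'}$ via $g$, and using the canonical identification $\D(\cG_R)(R) \cong M = M_0 \otimes_{W(\kappa)} R$ of \ref{faltings defo}, uniqueness of horizontal extensions (together with the specialisation identity at $\tilde{x}$ given by \ref{crystensors}) forces the pulled-back tensors on $M|_{g(Z)}^\otimes$ to be precisely the constant tensors $s_{\alpha,crys,\tilde{x}} \otimes 1$. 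In particular their preservation by the Frobenius $\phi_M = u \circ (\phi \otimes \sigma)$ of Faltings' construction pins down the tautological $R$-point $u|_{g(Z)} \in U(g(Z))$ to in fact lie in $U_{G(\tilde{x})}(g(Z))$, since $G(\tilde{x}) = GL_{s_{\alpha,crys,\tilde{x}}}$. This gives the desired factorisation of $g|_Z$ through $\Spf R_{G(\tilde{x})}$.

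To upgrade the resulting closed immersion $g|_Z: Z \hookrightarrow \Spf R_{G(\tilde{x})}$ to an isomorphism I would use a dimension count. Both sides are formally smooth over $W(\kappa)$: $Z$ since $\cS_K$ is smooth \cite[2.3.8]{kis2}, and $\Spf R_{G(\tilde{x})}$ by construction as the completion at the identity of the unipotent $U_{G(\tilde{x})}$. Their common dimension equals $\dim G/Q_\mu$: for $Z$ this is the relative dimension of the Shimura variety, for $\Spf R_{G(\tilde{x})}$ this is $\dim U_{G(\tilde{x})}$. A closed immersion of formally smooth formal schemes of equal dimension over a common smooth base is an isomorphism.

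The identification $\cG|_Z \cong \cG_{G(\tilde{x})}$ is then automatic by functoriality, since both arise from pulling back $\cG_R$ along $\Spf R_{G(\tilde{x})} \hookrightarrow \Spf R$. Likewise, under the induced identification of Dieudonn\'e modules, the tensors $s_{\alpha,dR}$ on $\hat{\cV}|_Z^\otimes$ become the constant tensors $s_{\alpha,crys,\tilde{x}} \otimes 1$ on $M_G = M_0 \otimes R_{G(\tilde{x})}$ by the very construction of the horizontal extension used above. The main obstacle is the second step: carefully verifying that the horizontality together with $\phi$-compatibility of $s_{\alpha,dR}$, transported through Faltings' explicit model of the versal deformation, really does force $u|_{g(Z)}$ into $U_{G(\tilde{x})}$ rather than some larger subgroup; this requires tracking through Faltings' formalism and the crystalline/\'etale compatibility of tensors at the characteristic zero point $\tilde{x}$, which is the technical heart of Kisin's argument.
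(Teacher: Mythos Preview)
Your overall strategy diverges from the paper's, and the divergence hides a genuine gap. The paper does \emph{not} try to show directly that the tautological point $u|_{g(Z)}$ lies in $U_{G(\tilde{x})}$. Instead it argues pointwise: for every characteristic zero point $\tilde{x}'$ of $Z$, parallel transport in the convergent $F$-isocrystal (Berthelot--Ogus) shows $s_{\alpha,crys,\tilde{x}'} = s_{\alpha,crys,\tilde{x}}$, and then Kisin's result \cite[1.5.11]{kis2} (which rests on the $\fS$-module machinery, not merely on horizontality) shows that $\tilde{x}'$ factors through $\Spf R_{G(\tilde{x})}$. Density of such points in the Jacobson domain $\cO_Z$ then gives the factorisation of all of $Z$. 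The horizontality of $s_{\alpha,dR}$ enters only to compare the crystalline tensors at different characteristic zero specialisations, not to pin down $u$.

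The gap in your argument is the step ``uniqueness of horizontal extensions \dots forces the pulled-back tensors on $M|_{g(Z)}^\otimes$ to be precisely the constant tensors $s_{\alpha,crys,\tilde{x}} \otimes 1$.'' Uniqueness of horizontal extensions requires a \emph{second} horizontal section agreeing at $\tilde{x}$, and you are implicitly taking the constant section $s_{\alpha,crys,\tilde{x}} \otimes 1$ for this role. But the Faltings connection on $M = M_0 \otimes_{W(\kappa)} R$ is \emph{not} the trivial connection: it is the unique connection compatible with $\phi_M = u \circ (\phi_0 \otimes \sigma)$, and constant sections are in general not horizontal for it. The constant tensors $s_{\alpha,crys,\tilde{x}} \otimes 1$ are known to be horizontal only over $\Spf R_{G(\tilde{x})}$ (this is part of what \cite[1.5]{kis2} establishes), which is precisely the locus you are trying to show $g(Z)$ lands in. So the argument is circular: you need $g(Z) \subset \Spf R_{G(\tilde{x})}$ to know the constant tensors are horizontal, in order to conclude $g(Z) \subset \Spf R_{G(\tilde{x})}$. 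Your closing caveat about ``carefully verifying'' the second step understates the issue; the missing ingredient is not bookkeeping but the substantive input of \cite[1.5.11]{kis2}.
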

\begin{proof}
Fix an extension of $v$ to $\bar{E}$. We claim that for any finite extension $F \subset F'\subset \bar{E}$ and any point $\tilde{x}' \in \Sh_K(F')$ specializing to $x$ that also lies in $Z(F'_v)$, we have an identification of $s_{\alpha, crys, \tilde{x}'}= s_{\alpha, crys, \tilde{x}} \in \D(\cG_0)(W(\kappa))^\otimes$. By \cite[1.5.11]{kis2} this implies all the induced maps $\tilde{x}': \Spec F' \rightarrow \Spf R$ factor through the same adapted deformation space $\Spf R_{G(\tilde{x})} \subset \Spf R$.

This suffices to prove the result, as $\cO_Z$ being a quotient of a completion of a Jacobson ring and a domain, the intersection of all the prime ideals cut out by such points is zero. Assuming the claim, we therefore get the factorisation $g|_Z: Z \rightarrow \Spf R_{G(\tilde{x})}$. Moreover, since the target is a power series ring and $Z$ is irreducible of the same dimension, this map is an isomorphism. We have immediately an induced isomorphism $\cG|_Z \rightiso \cG_{G(\tilde{x})}$ since both these $p$-divisible groups are obtained as pullbacks of $\cG_R$ along the same map up to the isomorphism $g|_Z$. 

Finally, we must prove the tensors are identified under the induced
$$\cV|_{Z}^\otimes = \D(\cG|_Z)(\cO_{Z})^\otimes \rightiso \D(\cG_{G(\tilde{x})})(R_{G(\tilde{x})})^\otimes.$$
But recalling that $\D(\cG_{G(\tilde{x})})(R_{G(\tilde{x})}) \cong \D(\cG_0)(W(\kappa)) \otimes_{W(\kappa)} R_{G(\tilde{x})}$ with tensors coming from the first factor, and the Berthelot-Ogus comparison $H^1_{dR}(\cA_{\tilde{x'}}/F'_v) \cong \D(\cG_0)(W(\kappa)) \otimes_{W(\kappa)} F'_{v}$, together with the compatibility of this with $p$-adic comparison maps and the result of Blasius and Wintenberger \cite{blasius} stating that the tensors $s_{\alpha, et, \tilde{x}'}$ and $s_{\alpha, dR, \tilde{x}'}$ correspond under these maps, the tensors are identified (using the claim) on our dense set of points, and so since $\cV_{dR}$ is locally free they agree on the whole of $Z$.

It remains to prove the claim. But note that $s_{\alpha,dR}$ is parallel with its evaluation on fibre at $\tilde{x}'$ identified with $s_{\alpha, crys, \tilde{x}'}$, $Z_\eta$ is connected, and $\tilde{x},\tilde{x'}$ both specialise to $x$. By the theory of Berthelot-Ogus \cite[2.9]{bo1}, $\cV|_{Z}^\otimes[1/p]$ can be realised as a convergent $F$-isocrystal, so we can relate the fibres at $\tilde{x}$ and $\tilde{x'}$ by parallel transport and the claim is immediate.
\end{proof}
 
\begin{cor}
\label{formalcor}
Let $x \in \cS_K$ be a closed point of characteristic $p$, with residue field $\kappa$. Let $\hat{N}_x$ be its formal neighbourhood,  $\cG_0$ the $p$-divisible group over the special fibre and $\tilde{x}$ a point on the generic fibre specialising to $x$. Then the corresponding tensors $s_{\alpha, crys, \tilde{x}} \in \D(\cG_0)(W(\kappa))^\otimes$ are independent of the choice of $\tilde{x}$, so $G(x):=G(\tilde{x})$ is well-defined up to inner automorphisms, and we have a canonical identification $\hat{N}_x \cong \Spf R_G$ extending to an identification of filtered $F$-crystals exchanging the tensors.
\end{cor}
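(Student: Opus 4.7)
The plan is to leverage Theorem (\ref{formalid}) by showing that its hypotheses and conclusions glue together as $\tilde{x}$ varies. First I would observe that smoothness of $\cS_K$ forces $\hat{N}_x \cong \Spf W(\kappa)[[t_1,\dots,t_n]]$, which is irreducible; the natural map $\hat{N}_x \to \hat{U}_{x'}$ arising from the normalisation therefore factors through a single irreducible component $Z_x$ of $\hat{U}_{x'}$, and realises $\hat{N}_x$ as the branch of the (formal) normalisation of $Z_x$ living over $x$. Any $\tilde{x}$ on the generic fibre of $\Sh_K$ specialising to $x$ in $\cS_K$ corresponds to a point of the rigid generic fibre of $\hat{N}_x$, and hence its image in $\overline{\Sh_K}$ lies in $Z_x$.

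The heart of the matter is to show that for any two such points $\tilde{x}_1, \tilde{x}_2$ the tensors $s_{\alpha, crys, \tilde{x}_1}$ and $s_{\alpha, crys, \tilde{x}_2}$ agree in $\D(\cG_0)(W(\kappa))^\otimes$. For this I would repeat the parallel-transport argument from the final paragraph of the proof of (\ref{formalid}): since $\hat{N}_x$ is a formal polydisc its rigid generic fibre is connected, so $\tilde{x}_1$ and $\tilde{x}_2$ lie in the same connected component of the rigid generic fibre of $Z_x$. Using that $\cV|_{Z_x}^\otimes[1/p]$ underlies a convergent $F$-isocrystal by Berthelot--Ogus \cite[2.9]{bo1}, and that each $s_{\alpha, dR}$ is parallel for the Gauss--Manin connection, parallel transport inside this convergent isocrystal identifies the fibres at $\tilde{x}_1$ and $\tilde{x}_2$ with $\D(\cG_0)(W(\kappa))^\otimes[1/p]$ and exchanges the corresponding values of $s_{\alpha, dR}$. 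Under the $p$-adic comparison of (\ref{crystensors}) these values are $s_{\alpha, crys, \tilde{x}_i}$, giving the desired equality in $\D(\cG_0)(W(\kappa))^\otimes[1/p]$; integrality is automatic because both tensors already lie in the integral lattice $\D(\cG_0)(W(\kappa))^\otimes$.

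This establishes that $G(x) := G(\tilde{x})$ is well defined up to inner automorphism. Applying (\ref{formalid}) to $Z_x$ with any such $\tilde{x}$ then yields $Z_x \rightiso \Spf R_{G(x)}$; in particular $Z_x$ is already formally smooth over $W(\kappa)$, so the normalisation $\hat{N}_x \to Z_x$ must itself be an isomorphism, producing the canonical identification $\hat{N}_x \cong \Spf R_{G(x)}$, together with the identification of filtered $F$-crystals exchanging the tensors inherited directly from (\ref{formalid}). I expect the main obstacle to be the parallel-transport step: one must be sure that the convergent $F$-isocrystal structure really does allow one to move between arbitrary generic-fibre points of $\hat{N}_x$ and that this exchanges the integral tensors. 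The first point reduces to connectedness of the formal polydisc, and the second is forced by both tensors lying in the integral lattice once equality is known rationally.
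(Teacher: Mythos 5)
Your proof is correct and supplies the argument that the paper leaves implicit in deducing the corollary from Theorem (\ref{formalid}). The essential observation---that $\hat{N}_x$ is a formal power series ring, hence irreducible, so every $\tilde{x}$ specialising to $x$ factors through a single irreducible component $Z_x$ of $\hat{U}_{x'}$---is exactly what is needed to pass from the component-local statement of (\ref{formalid}) to the well-definedness asserted by the corollary, and your deduction that $\hat{N}_x \rightiso Z_x$ from the formal smoothness of $\Spf R_{G(x)}$ is correct. One minor redundancy: once all relevant $\tilde{x}$'s are seen to lie in the same component $Z_x$, the claim already proved inside (\ref{formalid}) (that tensors agree for any two such points) applies verbatim, so you do not need to re-run the Berthelot--Ogus parallel transport argument yourself.
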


\subsubsection{}
We may use this to finally prove the main theorem (\ref{main 3.3}). Recall that to show $\cP_K$ was a $G$-bundle it would suffice to construct a section over the formal neighbourhood $\hat{N}_x = \Spf \hat{\cO}_{S_K,x}$ of $x$. But combining a map as in the second part of (\ref{crystensors}) with the identifications (\ref{3.3.10}) and (\ref{formalcor}) we get a tensor preserving isomorphism
$$(V \otimes_{\Zp} W(\kappa)) \otimes_{W(\kappa)} R_{G(\tilde{x})} \rightiso M_0 \otimes_{W(\kappa)} R_{G(\tilde{x})} \rightiso \D(\cG_{R_{G(\tilde{x})}}) \rightiso \cV|_{\hat{N}_x}$$ 
that is exactly such a section.

\subsubsection{}
We next need to show that the torsor $\cP_K = \uIsom_{s_\alpha}(V,\hat{\cV})$ is a filtered $F$-crystal with $G$-structure. Since the $s_{\alpha,dR}$ are horizontal by construction, $\cP_K$ acquires a flat connection by (\ref{2.3.2}). 

We define its filtration via the Rees construction. Since $s_{\alpha,dR} \in \Fil^0(\hat{\cV}^\otimes)$, we have by (\ref{rees equivalence}) $\Rees(s_{\alpha,dR}) \in \Rees(\hat{\cV})^\otimes$ a finite set of $\Gm$-invariant tensors. Let $\cP_K$ have Rees structure given by
$$\Rees(\cP_K) := \uIsom_{\Rees(s_\alpha)}(V_{\hat{S}_K \times \A^1}, \Rees(\hat{\cV}))$$
together with the natural identification 
$\Rees(\cP_K)/(t-1) = \cP_K$. Taking local splittings of the Hodge filtration it is easy to see that $\Rees(\cP_K)$ is a $\Gm$-equivariant torsor, and this suffices for it to define a Rees structure. Moreover, we have Griffiths transversality for this Rees structure and the connection on $\cP_K$: it is inherited from Griffiths transversality for $\hat{\cV}$ via (\ref{2.3.4} (1)).

Finally we must define a Frobenius using the Frobenius $\phi: \hat{\cV}[1/p] \rightiso \hat{\cV}[1/p]$ coming from crystalline cohomology, and check it satisfies all necessary conditions. First note that the $s_{\alpha,dR} \in \hat{\cV}^\otimes$ are $\phi$-invariant. Indeed, since they and the Frobenius $\phi$ coming from relative crystalline cohomology are horizontal, it suffices to check this on a single point of each component, whence it follows from the first part of (\ref{crystensors}). Given this, $\phi$ induces the structure of a filtered $F$-isocrystal with $G$-structure via
$$\uIsom_{s_\alpha}(V[1/p],\hat{\cV}[1/p]) \ni s \mapsto \phi \circ s \in \uIsom_{s_\alpha}(V[1/p],\hat{\cV}[1/p]).$$

What is more, for any choices $(U_\beta,\sigma_\beta)$ of local Frobenius lifts on $\hat{\cS}_K$, it is well-known that (as the relative crystalline cohomology of an abelian variety) $\hat{\cV}$ is a strongly divisible filtered $F$-crystal, so $\phi$ induces isomorphisms between lattices inside $\hat{\cV}[1/p]$ given by
$$\Rees(\sigma_{\beta}^*\hat{\cV})/(t-p) \rightiso \hat{\cV}|_{U_\beta}$$
which respects the tensors, and therefore induces an isomorphism
$$\Rees(\sigma_\beta^*\cP_K)/(t-p) = \uIsom_{s_\alpha}(V,\Rees(\sigma_{\beta}^*\hat{\cV})/(t-p)) \rightiso \uIsom_{s_\alpha}(V,\hat{\cV}|_{U_\beta}) = \cP_K.$$
Thus $\cP_K$ with the structures we have described is a filtered $F$-crystal with $G$-structure.

\subsubsection{}
To complete the proof of the main theorem of this section (\ref{main 3.3}) it suffices to check that the lattice thus constructed is the crystalline canonical model, of which there remains only the L condition. Take $x \in \cS_{K}(W(\kappa))$ a crystalline point. Recall that $V$ is our faithful representation of $G=G^c$, here taken over $\Zp$. The L condition will follow if we can show that inside $H^1_{crys}(\cA_x/W(\kappa))[1/p] = D_{crys}(\hat{V}_p(\cA_x)^*)$ we have an identity of lattices $$x^*\cP_K \times^G V = \phi^*\fM(x^*\omega_{et}(V))/u.$$

The left hand side is easily computed to be simply $\D(\cA_x)(W(\kappa))$, which in the case $p>2$, noting that here the Hodge Tate weights are 0 and 1, is the Fontaine-Laffaille module attached to $\hat{T}_p(\cA_x)^*$, and this is equal to the right hand side by (\ref{FL equality}). For $p=2$, one must give a different argument, for example deducing the equality of lattices we need from the result of Kim \cite[4.2 (1)]{kim}. With this, our main theorem in the Hodge type case is proved.

\subsection{Abelian type case}
\label{abtype}
In this section we extend the results of the previous section to the case of abelian type Shimura varieties. We follow the same basic approach as in \cite[\S4.7]{tl}, first using the Hodge type case construction to obtain a construction over an ``initial Shimura datum'' from which we may push it forward to the abelian type Shimura datum.

One slight variation is that working at hyperspecial level almost everywhere in \cite{tl}, we were forced to work over a connected component and then deduce the existence over the whole Shimura variety by a formal argument. In the present context, with hyperspecial level at a single prime we can borrow the group theoretic description of \cite[3.3.10]{kis2} to give a direct construction more analogous to that of Deligne \cite{del2}, and which we suspect might be friendlier for computations.

\subsubsection{}
We recall the basic setup. Fix $(G_2,\fX_2)$ a Shimura variety of abelian type with $G_2$ unramified at $p$ and fix a reductive integral model we will also denote by $G_2/\Zbp$ (or $G_{2,\Zbp}$ if there is risk of confusion). By the argument of \cite[3.4.14]{kis2} we may find a Hodge type Shimura datum $(G,\fX)$ and a reductive integral model $G_{\Zbp}/\Zbp$ for $G$ such that there is a central isogeny $i:G^{der} \rightarrow G_2^{der}$ which induces a map of connected Shimura data $(G^{der},\fX^+) \rightarrow (G_2^{der},\fX_2^+)$ and under which $G^{der}(\Zbp)$ is mapped to $G_2^{der}(\Zbp)$. We may also arrange that $E:=E(G,\fX) \supset E(G_2,\fX_2) =: E_2$.

Next recall the following result of \cite[\S4.6]{tl}.
\begin{prop}[Initial Shimura data]
Let $(G^{der},\fX^+)$ be a connected Shimura datum, and $E \supset E(G^{ad},\fX^+_{ad})$ a field such that there exist Shimura data with connected datum $(G^{der},\fX^+)$ and reflex field contained in $E$.

Then there is an ``initial object'' $(\cB,\fX_{\cB})$ amongst such Shimura data, in the sense that for any such Shimura datum $(G,\fX)$ there is a canonical map $(\cB,\fX_{\cB}) \rightarrow (G,\fX)$ of Shimura data. The group $\cB$ is an extension
$$1 \rightarrow G^{der} \rightarrow \cB \rightarrow E^* \rightarrow 1,$$
the canonical map given by the projection $$\cB = G \times_{G^{ab}} E^* \rightarrow G,$$
where the map $E^* \rightarrow G^{ab}$ is $N_{E/\Q} \circ \mu$,
for the abelianised Hodge cocharacter $\mu: E^* \rightarrow G^{ab}_E$ 
and the construction enjoys the following two functoriality properties.

\begin{enumerate}
\item Let $j: (G^{der},\fX^+) \rightarrow (G_2^{der},\fX^+_2)$ be a map of connected Shimura data (induced by a central isogeny $G^{der} \rightarrow G_2^{der}$), and $(\cB,\fX_{\cB})$, $(\cB_2,\fX_{\cB,2})$ the corresponding initial Shimura data. Then $j$ induces a central isogeny $j_*:\cB \rightarrow \cB_2$ that gives a map of Shimura data
$$j_*: (\cB,\fX_\cB) \rightarrow (\cB_2,\fX_{\cB,2})$$

\item Fix $(G^{der},\fX^+)$, let $E \supset E' \supset E(G^{ad},\fX^+_{ad})$ be two fields as above giving initial Shimura data $(\cB_E,\fX_{E}),(\cB_{E'}, \fX_{E'})$. Then the \emph{norm map} $N_{E/E'}:E^* \rightarrow E'^*$ induces a canonical morphism of Shimura data
$$N_{E/E'}: (\cB_E,\fX_{E}) \rightarrow (\cB_{E'}, \fX_{E'}).$$
\end{enumerate}
\end{prop}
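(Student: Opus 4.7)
My plan is to first construct the pair $(\cB, \fX_\cB)$ relative to an auxiliary choice of Shimura datum $(G, \fX)$ satisfying the stated hypotheses, then verify the resulting object is initial (and hence canonically independent of the choice), and finally deduce the two functoriality statements.

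For the construction, since $E(G, \fX) \subset E$, the abelianised Hodge cocharacter $\mu^{ab}: \Gm \to G^{ab}$ is defined over $E$. Applying $\Res_{E/\Q}$ and composing with the natural norm map $\Res_{E/\Q} G^{ab}_E \to G^{ab}$ yields a $\Q$-morphism $\nu: E^* \to G^{ab}$. Set $\cB := G \times_{\nu, G^{ab}} E^*$. From the exact sequence $1 \to G^{der} \to G \to G^{ab} \to 1$ together with the surjectivity of $G \to G^{ab}$, the universal property of the fibre product yields the advertised extension $1 \to G^{der} \to \cB \to E^* \to 1$. To produce $\fX_\cB$, fix $h \in \fX$ and a canonical Shimura cocharacter $h_E: \bS \to E^*_\R$ (as in \S\ref{3.2}), chosen so that its image in $G^{ab}_\R$ under $\nu$ matches $h^{ab}$; such a choice exists and is essentially unique given a CM-type. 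Then $h_\cB := (h, h_E)$ factors through $\cB_\R$ by the fibre product property, and we let $\fX_\cB$ be its $\cB(\R)$-conjugacy class. Verification of Deligne's axioms (SV1)--(SV3) for $(\cB, \fX_\cB)$ reduces to the corresponding axioms for $(G, \fX)$ and $(E^*, h_E)$, and computing the Galois orbit of $\mu_{h_\cB}$ shows $E(\cB, \fX_\cB) \subset E$.

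For initiality, given another Shimura datum $(G', \fX')$ satisfying the hypotheses, we construct a canonical morphism $(\cB, \fX_\cB) \to (G', \fX')$. The key point is that $\cB$ is generated as an algebraic $\Q$-group by the image of $G^{der}$ and a lift of $E^*$; using the identification $G^{der} = G'^{der}$ we obtain $G^{der} \to G'$, and using the abelianisation $\nu': E^* \to G'^{ab}$ constructed from $(G', \fX')$ we may patch these into a morphism $\cB \to G'$, with the compatibility on the common central kernel automatic from the equality of the two composite maps into $G'^{ab}$. The same construction with $G'$ in place of $G$ shows $\cB$ is canonically independent of the auxiliary choice, and $h_\cB \mapsto h' \in \fX'$ by construction, giving a morphism of Shimura data. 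Its uniqueness follows since it is determined on $G^{der}$ (by the identification of derived subgroups) and on $E^*$ (by composition with the abelianisation of $G'$).

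The functoriality statements then follow formally. For (1), a central isogeny $j: G^{der} \to G_2^{der}$ induces, together with the identity on $E^*$, a map of fibre products $j_*: \cB \to \cB_2$; applying the snake lemma to the defining extensions confirms it is again a central isogeny, and compatibility of Hodge cocharacters is immediate. For (2), the norm map $N_{E/E'}: E^* \to E'^*$ is compatible with the projection $\mu^{ab}_E \mapsto \mu^{ab}_{E'}$, so it descends to a map of fibre products $N_{E/E'}: \cB_E \to \cB_{E'}$ preserving the canonical lifts of $h$. The subtlest aspect of the whole argument is the patching step in initiality: one must pin down the exact compatibility of the two abelianisation maps $\nu$ and $\nu'$ after identifying the derived subgroups, and check that the patched map is a well-defined algebraic $\Q$-morphism rather than merely an assignment on $\R$-points. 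Once this universal property is cleanly formulated, transporting $\fX_\cB$ to $\fX'$ and verifying the functorialities are essentially formal consequences of the universal property of fibre products and of the canonical Shimura datum on $E^*$.
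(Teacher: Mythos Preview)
The paper does not prove this proposition here; it merely recalls it from \cite[\S4.6]{tl}, so there is no in-paper argument to compare against and your sketch must stand on its own.

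Your construction of $(\cB,\fX_\cB)$ is fine, but the initiality argument has a real gap. You assert that the inclusion $G^{der}\hookrightarrow G'$ together with $\nu':E^*\to G'^{ab}$ can be ``patched into a morphism $\cB\to G'$''. This is not a formality: $\cB$ sits in a (generally non-split) extension $1\to G^{der}\to\cB\to E^*\to 1$, and a homomorphism out of an extension is \emph{not} determined by a map on the kernel together with a map on the cokernel. What must actually be shown is a canonical isomorphism $\cB\cong G'\times_{G'^{ab}}E^*=:\cB'$ of extensions of $E^*$ by $G^{der}$, or equivalently (viewing both as central extensions of $G^{ad}\times E^*$ by $Z=Z(G^{der})$) that their extension classes agree. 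Your remark that ``compatibility on the common central kernel is automatic'' is precisely the statement to be proved, not a justification of it. The same issue recurs in your treatment of (1): saying ``$j$ together with the identity on $E^*$ induces a map of fibre products $\cB\to\cB_2$'' presupposes that $\cB_2$ is the pushout of $\cB$ along $j$, which again is the content rather than a consequence.

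One clean way to close the gap: given $(G,\fX)$ and $(G',\fX')$ with the same connected datum, form $G''=G\times_{G^{ad}}G'$ with its evident Shimura datum. One checks that $G''^{der}$ is the diagonal copy of $G^{der}$ (write $G''=G^{der}_{\mathrm{diag}}\cdot(Z(G)\times Z(G'))$), so $\cB'':=G''\times_{G''^{ab}}E^*$ is again an extension of $E^*$ by $G^{der}$. The two projections $G''\to G$ and $G''\to G'$ then induce maps $\cB''\to\cB$ and $\cB''\to\cB'$ of such extensions which are the identity on both $G^{der}$ and $E^*$, hence isomorphisms; composing gives the canonical $\cB\cong\cB'$ and with it the desired $\cB\to G'$. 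For (1), first choose $G$ to be a central cover of $G_2$ with derived group $G^{der}$ (so that a map $G\to G_2$ extending $j$ actually exists) before forming the fibre products; once that choice is made the map $j_*$ is visible and independence of the choice follows from the initiality just established.
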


\subsubsection{}
Let $E=E(G,\fX)$ and $E_2 = E(G_2,\fX_2)$. As in \cite[4.7.3]{tl} we note that since both are unramified at $p$, we have natural integral models $E^*_{\Zbp},E_{2,\Zbp}^*$ and these give rise to a diagram
$$G_{\Zbp} \leftarrow G_{\Zbp} \times_{G^{ab}_{\Zp}} E^*_{\Zbp} = \cB_{\Zbp} \rightarrow \cB_{2,\Zbp} \rightarrow G_{2,\Zbp}.$$
This in turn gives a diagram of integral models, over which our entire construction will take place
$$\cS_{G(\Zp)}(G,\fX) \lmap{a} \cS_{\cB(\Zp)}(\cB,\fX_\cB) \map{b} \cS_{G_2(\Zp)}(G_2,\fX_2).$$
Let us also recall the map $$\delta: \cS_{\cB(\Zp)}(\cB,\fX_\cB) \rightarrow \cS_{\hat{\cO_E}^*}(E^*,h_E)$$
coming from projection onto the second factor of $\cB = G \times_{G_{ab}} E^*$. 

\subsubsection{}
\label{3.4.4}
These remarks made, we may take the crystalline canonical models $\cP_{G(\Zp)}$ of (\ref{main 3.3}), and $\cP_{\hat{\cO_E}^*}$ of (\ref{3.2}), and by the same argument as \cite[4.7.5]{tl} manufacture a candidate (viewed as a weak filtered $F$-crystal)
$$\cP_{\cB(\Zp)} = a^*\cP_{G(\Zp)} \times_{\theta} \delta^*\cP_{\hat{\cO_E}^*}$$
for the crystalline canonical model over $\cS_{\cB(\Zp)}$, where $\theta$ is a natural isomorphism between the pushforwards of these weak filtered $F$-crystals with $G$ structure to the group $G^{ab}$, and the Rees structure on a fibre product is given by the formula
$$\Rees(P_1 \times_\theta P_2) := \Rees(P_1) \times_{\Rees(\theta)} \Rees(P_2)$$
which relies on the fact, easily seen in this case, that $\theta$ induces an isomorphism of filtered $G^{ab}$-bundles.

Let us check this is indeed the crystalline canonical model. Firstly, we must canonically identify $\cP_{\cB(\Zp)} \otimes E$ with $P_{\cB(\Zp)}(\cB,\fX_{\cB})$. Recall (\ref{lz facts} (1)) that we may canonically identify
$$P_{\cB(\Zp)}(\cB,\fX_{\cB}) \times^{\cB^c} G \cong a^*P_{G(\Zp)}(G,\fX)$$
and $$P_{\cB(\Zp)}(\cB,\fX_{\cB}) \times^{\cB^c} E^{*,c} \cong \delta^*P_{\hat{\cO_E}^*}(E^*,h_E)$$
 and so by \cite[4.7.4]{tl} the identification required follows immediately from the $\theta$-compatible identifications 
 $$a^*(\iota_G): a^*\cP_{G(\Zp)} \otimes E \rightiso a^*P_{G(\Zp)}(G,\fX)$$
 and $$\delta^*(\iota_{E^*}): \delta^*\cP_{\hat{\cO_E}^*} \otimes E \rightiso \delta^*P_{\hat{\cO_E}^*}(E^*,h_E).$$
 
 Next, we note that since there is a commutative diagram of pro-\'etale torsors
 $$
 \begin{CD}
     \Sh(\cB,\fX_{\cB}) @>>> \Sh(G,\fX)  \\
@VVV        @VVV\\
  \Sh_{\cB(\Zp)}(\cB,\fX_{\cB})   @>>>  \Sh_{G(\Zp)}(G,\fX),
\end{CD}
 $$
and a similar one replacing $(G,\fX)$ by $(E^*,h_E)$, we have that any crystalline point $x \in \cS_{\cB(\Zp)}$ maps to a crystalline point of $\cS_{G(\Zp)}(G,\fX)$ and $\cS_{\hat{\cO_E}^*}(E^*,h_E)$, and that $\cP_{\cB(\Zp)} \times^{\cB^c} G$ and $\cP_{\cB(\Zp)} \times^{\cB^c} E^{*,c}$ satisfy the CPLF condition. Applying the uniqueness statement of \cite[4.7.4]{tl} over each crystalline point $x$, this is enough to deduce that $\cP_{\cB(\Zp)}$ satisfies the CPLF condition.

Next we must check that the equivariant $\cB(\Afp)$ action on $P_{\cB(\Zp)}(\cB,\fX_\cB)$ extends to $\cP_{\cB(\Zp)}$. But this follows formally from the uniqueness statement (\ref{main uniqueness}) whose proof one can easily check does not use $\cB(\Afp)$-equivariance or strong divisibility. 

At this point we know $\cP_{\cB(\Zp)}$ is a weak crystalline canonical model, and it remains to check strong divisibility at finite level $K$ for any local Frobenius lifts $(U_\beta, \sigma_\beta)$ on $\cS_{\cB(\Zp)K}$. Take an open compact $K' \subset G(\Afp)$ such that the identity Hecke operator induces a map of Shimura varieties $\cS_{\cB(\Zp)K} \rightarrow \cS_{G(\Zp)K'}$, and by \cite[2.0.13]{del2} (and the extension property) it is possible to arrange that this be an isomorphism when restricted to a connected component. Note that strong divisibility may be checked after base change to $\cO = \cO_v^{ur}$, over which both varieties split into a union of connected components and note that any component $\cS^+ \subset \cS_{\cB(\Zp)K,\cO}$ is identified with a component $\cS'^+ \subset \cS_{G(\Zp)K',\cO}$. Therefore any local Frobenius lift $(U_\beta,\sigma_\beta)$ on $\cS^+$ induces one on $\cS'^+$ relative to which the strong divisibility of $\cP_{\cB(\Zp)}$ follows from its identification with a crystalline canonical model for $(G,\fX)$.

\subsubsection{}
We now turn our attention to descending $\cP_{\cB(\Zp)}$ to a crystalline canonical model over $\cS_{G_2(\Zp)}(G_2,\fX_2)$. Let us first recall some of the group theoretic constructions of Kisin \cite[\S3.3]{kis2} following Deligne, and introduce some new ones of our own. 

The reader unfamiliar with $*$ notation can check these references for the precise definitions, but it usually suffices just to imagine $H *_\Gamma \Delta$ as arising in the following vague way. Suppose $H$ and $\Delta$ act on some object but these actions interact in some way: there is a subgroup $\Gamma$ of $H$ whose action is identified with that of a part of $\Delta$, and the actions of $H$ and $\Delta$ need not commute but interact via some conjugation action of $\Delta$ on $H$. Then $H *_\Gamma \Delta$ is the subgroup of the automorphism group generated by $H$ and $\Delta$ in such a situation.

We will also (as in \cite[3.3.2]{kis2}) write $G(\Zbp)_+ := G(\Zbp) \cap G(\Q)_+$, where $G(\Q)_+$ is the intersection of the preimage of the connected component $G^{ad}(\R)^+$ of $G^{ad}(\R)$ with $G(\Q)$, write $G^{ad}(\Zbp)^+ := G^{ad}(\Zbp) \cap G^{ad}(\R)^+$, and use overline notation to denote closure in $G(\Afp)$.

For $G$ a reductive group over $\Z_{(p)}$ with centre $Z$, we define the locally profinite group
$$\cA_p(G) := \frac{G(\Afp)}{\overline{Z(\Zbp)}} *_{\frac{G(\Zbp)_+}{Z(\Zbp)}} G^{ad}(\Zbp)^+,$$
and her subgroup
$$\cA^\circ_p(G) := \frac{\overline{G(\Zbp)_+}}{\overline{Z(\Zbp)}} *_{\frac{G(\Zbp)_+}{Z(\Zbp)}} G^{ad}(\Zbp)^+,$$
which \cite[3.3.2]{kis2} depends only on $G^{der}/\Zbp$, and so we may also denote by $\cA^\circ_p(G^{der})$.

We shall also need the extensions

$$\cA_p^{dR}(G) := (G^c \times \frac{G(\Afp)}{\overline{Z(\Zbp)}}) *_{\frac{G(\Zbp)_+}{Z_{nc}(\Zbp)}} G^{ad}(\Zbp)^+$$
and
$$\cA_p^{dR,\circ}(G) := (G^c \times \frac{\overline{G(\Zbp)_+}}{\overline{Z(\Zbp)}}) *_{\frac{G(\Zbp)_+}{Z_{nc}(\Zbp)}} G^{ad}(\Zbp)^+$$
where $\frac{G(\Zbp)_+}{Z_{nc}(\Zbp)}$ is the obvious subgroup embedded diagonally in $(G^c \times \frac{G(\Afp)}{\overline{Z(\Zbp)}})$, and $G^{ad}(\Zbp)^+$ acts by conjugation on both factors simultaneously. We view this merely as a functor from $\Zbp$-algebras to groups, although it is possible to make more refined statements.

Given our $p$-adic Hodge theory definition of $P_{G(\Zp)}(G,\fX)$ we need to check the following statement.

\begin{prop}
The group $\cA_{p}^{dR}(G)$ is canonically an extension
$$1 \rightarrow G^c \rightarrow \cA_{p}^{dR}(G) \rightarrow \cA_p(G) \rightarrow 1.$$
 Let $(G_\Q,\fX)$ be a Shimura datum. Then $\cA_p^{dR}(G)$ acts canonically on $P_{G(\Zp)}(G_\Q,\fX)$ in such a way that the normal subgroup $G^c$ acts via the $G^c$-bundle action, and the action is equivariant via the above projection for the $\cA_p(G)$-action on $\Sh_{G(\Zp)}(G_\Q,\fX)$.
 
 Moreover we have:
 \begin{enumerate} 
 \item For any weak filtered $F$-crystal with $G$-structure $\cP_{G(\Zp)}$ together with an identification $\iota: \cP_{G(\Zp),E(G_\Q,\fX)_v} \rightiso P_{G(\Zp)}(G_\Q,\fX)$, the CPLF condition is invariant under the $\cA_p^{dR}(G)$-action. 
 \item The group $\cA_p^{dR,\circ}(G)$ is precisely the subgroup of $\cA_p^{dR}(G)$ acting on the fiber over a connected component $\Sh_{G(\Zp)}(G_\Q,\fX)^+_{\bar{E_v}}$.
 \end{enumerate}
 \end{prop}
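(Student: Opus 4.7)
The plan is to build up the $\cA_p^{dR}(G)$-action piecewise, exploiting the fact that both the $\cA_p(G)$-action on $\Sh_{G(\Zp)}(G_{\Q}, \fX)$ and the $G^c$-bundle structure on $P_{G(\Zp)}(G_{\Q}, \fX)$ are already in hand. To exhibit the extension, I would first construct the surjection $\cA_p^{dR}(G) \to \cA_p(G)$ by projecting the first amalgamand $G^c \times \frac{G(\Afp)}{\overline{Z(\Zbp)}}$ onto its Hecke factor and applying the identity to $G^{ad}(\Zbp)^+$, checking amalgamation compatibility: this reduces to the fact that $Z_{nc} \subset Z$ makes the projection $\frac{G(\Zbp)_+}{Z_{nc}(\Zbp)} \to \frac{G(\Zbp)_+}{Z(\Zbp)}$ well defined. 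That the kernel is $G^c$ follows from the observation that $G^c$ embeds as the direct factor $G^c \times \{1\}$ of the first amalgamand and is stable under the conjugation action of $G^{ad}(\Zbp)^+$, together with the standard normal form for amalgamated products.

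For the action itself I would specify three pieces and check they descend to the amalgamated product. The $G^c$-factor acts via the torsor structure; the Hecke piece $G(\Afp)/\overline{Z(\Zbp)}$ acts by the equivariant lift of (\ref{lz facts} (2)), well defined modulo $\overline{Z(\Zbp)}$ since central Hecke operators act trivially on both the base and the bundle; and $G^{ad}(\Zbp)^+$ acts by lifting its known action on $\Sh_{G(\Zp)}(G_{\Q}, \fX)$ using the functoriality of Liu--Zhu's de Rham fibre functor under inner twists of $(G_{\Q}, \fX)$. The amalgamation compatibility requires that any $\gamma \in \frac{G(\Zbp)_+}{Z_{nc}(\Zbp)}$ act identically via its two embeddings, which on the base is Kisin's compatibility, and on $P_{G(\Zp)}(G_{\Q}, \fX)$ reduces by functoriality and (\ref{lattices on points}) to a check at a single special point where all actions can be computed explicitly in terms of the Hodge cocharacter. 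The main obstacle will be making the $G^{ad}(\Zbp)^+$-action sufficiently precise to perform this verification, since a general element does not lift to $G(\Q)_+$; one must either invoke a density/approximation argument analogous to Kisin's \cite[3.3.10]{kis2}, or give a direct construction via the functoriality of the de Rham realization with respect to adjoint twists.

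For part (1), the CPLF condition is pointwise at crystalline points, so it suffices to show the crystalline locus is stable under each generating subgroup of $\cA_p^{dR}(G)$ and that the lattice and Frobenius identities transport correctly. For the $G^c$-factor, which fixes the base and acts on the $G^c$-bundle by left translation, invariance is automatic: both $\fM$ and $D_{crys}$ are $\otimes$-functors and the identification $\theta_x$ at a crystalline point $x$ is $G^c$-equivariant. For a Hecke element the translate of a crystalline point is crystalline, and the equivariant lift supplied by (\ref{lz facts} (2)) transports both structures. The $G^{ad}(\Zbp)^+$-case reduces to the same kind of verification once the action has been constructed above, the key fact being that this action is engineered to be compatible with the $p$-adic comparison on each fibre.

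For part (2), I would follow the analogue of Kisin's argument identifying $\cA_p^\circ(G)$ with the stabilizer of a geometric connected component $\Sh_{G(\Zp)}(G_{\Q}, \fX)^+_{\bar{E_v}}$. By construction $\cA_p^{dR,\circ}(G)$ is the preimage of $\cA_p^\circ(G)$ under the extension built in the first paragraph, and hence fits into a short exact sequence $1 \to G^c \to \cA_p^{dR,\circ}(G) \to \cA_p^\circ(G) \to 1$. Since $G^c$ acts fibrewise on $P_{G(\Zp)}(G_{\Q}, \fX)$ and $\cA_p^\circ(G)$ preserves the component by definition, every element of $\cA_p^{dR,\circ}(G)$ preserves the restriction of the bundle to this component. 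Conversely, any element of $\cA_p^{dR}(G)$ preserving the restriction must project to an element of $\cA_p(G)$ that preserves $\Sh_{G(\Zp)}(G_{\Q}, \fX)^+_{\bar{E_v}}$, and hence lies in $\cA_p^\circ(G)$; thus the stabilizer is exactly $\cA_p^{dR,\circ}(G)$.
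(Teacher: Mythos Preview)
Your outline is broadly correct and matches the paper's structure, but you have correctly identified the crux and then not quite resolved it: the construction of the $G^{ad}(\Zbp)^+$-action on $P_{G(\Zp)}$. Your suggestions (density/approximation, or ``functoriality under adjoint twists'') are vague placeholders, and the special-point check you propose for the amalgamation relations would be awkward to carry out without a precise definition in hand. The paper's key observation, which you are missing, is that one should work on the \'etale side first. By Deligne \cite[2.1.13]{del2} the group $G^{ad}(\Q)^+$ (and hence $G^{ad}(\Zbp)^+$) already acts algebraically on the infinite-level tower $\Sh(G_\Q,\fX) = \uIsom^\otimes(\omega_{\Zp,\mathrm{triv}},\omega_{et,\Zp})$, i.e.\ on the \'etale $G^c(\Zp)$-torsor itself. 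Since $\omega_{dR} = \D^0_{dR}\circ\omega_{et}$ with $\D^0_{dR}$ a genuine functor, any automorphism $\alpha$ of the \'etale torsor yields an automorphism $\D^0_{dR}(\alpha)$ of $P_{G(\Zp)} = \uIsom^\otimes(\D^0_{dR}\circ\omega_{et,\mathrm{triv}}, \D^0_{dR}\circ\omega_{et})$. The amalgamation relations then follow immediately from Deligne's explicit complex description, with no pointwise check needed.

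This same idea also streamlines your argument for (1). Because the $G^{ad}(\Zbp)^+$-action is defined through $\omega_{et}$, it visibly preserves the integral lattice $\omega_{et}\subset\omega_{et}\otimes\Qp$ and acts by automorphisms of lisse sheaves; after applying $\D^0_{dR}$ it therefore exchanges the $\fS$-module lattices in condition L and commutes with the Frobenii in condition F. Your case-by-case treatment of (1) is not wrong, but the paper's route makes the invariance essentially automatic once the action is set up correctly. Your argument for (2) is the same as the paper's.
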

 \begin{proof}
 The first statement about $\cA_p^{dR}(G)$ expressed as an extension is immediate from basic properties of the * construction \cite[2.0]{del2}.
 
 Recall that by definition (\ref{3.1.3}) and the Tannakian formalism
 $$P_{G(\Zp)}(G_\Q,\fX) = \uIsom^\otimes(\omega_{dR,triv}, \omega_{dR})$$
 where 
 $$\omega_{dR, triv}: \Rep(G_{E_v}) \ni V \mapsto V \otimes_{E_v} \cO_{\Sh_{G(\Zp)}(G_\Q,\fX)} \in \Bun(\Sh_{G(\Zp)}(G_\Q,\fX)).$$
By construction and (\ref{lz facts} (2)) we have an action of $G^c \times \frac{G(\Afp)}{\overline{Z(\Zbp)}}$ on $P_{G(\Zp)}(G_\Q,\fX)$. Our task is therefore to define an additional $G^{ad}(\Zbp)^+$ action and check the following relations.
\begin{itemize}
\item If $\gamma \in G(\Zbp)_+$, then its image in $G^{ad}(\Zbp)^+$ and its diagonal image in $G^c \times \frac{G(\Afp)}{\overline{Z(\Zbp)}}$ act identically on $P_{G(\Zp)}(G_\Q,\fX)$.
\item If $\delta \in G^{ad}(\Zbp)^+$ and $h \in G^c \times \frac{G(\Afp)}{\overline{Z(\Zbp)}}$ then in the automorphism group of $P_{G(\Zp)}(G_\Q,\fX)$ we have the relation $\delta(h).\delta = \delta .h$ where $\delta(h)$ is the image of $h$ under conjugation by $\delta$.
\end{itemize}

First the definition of the action. Recall that $\omega_{dR} = \D^0_{dR} \circ \omega_{et}$ where $\D^0_{dR}$ is Liu and Zhu's functor from de Rham lisse sheaves to vector bundles, and in particular is a functor, and of course we also have $\omega_{dR,triv} = \D^0_{dR} \circ \omega_{\Qp,triv}$ in the obvious sense. 

Therefore given an automorphism $\alpha$ of $\Sh(G_\Q,\fX) = \uIsom^\otimes(\omega_{\Zp,triv}, \omega_{et,\Zp})$, we may (applying $\D^0_{dR}$ and canonically extending using the $G^c$-action, perhaps after etale localising and then descending using canonicity) obtain an automorphism $\D^0_{dR}(\alpha)$ of
$$P_{G(\Zp)}(G_\Q,\fX) = \uIsom^\otimes(\D^0_{dR} \circ \omega_{et,triv}, \D^0_{dR} \circ \omega_{et}).$$

But by \cite[2.1.13]{del2} there is an algebraic $G^{ad}(\Q)^+$-action on $\Sh(G_\Q,\fX)$ given explicitly over $\C$, in particular an explicit action of $G^{ad}(\Zbp)^+$. We therefore define our action of $G^{ad}(\Zbp)^+$ using this action and the above remarks. One may verify that the relations above, and the claim about equivariance for the $\cA_p(G)$ action on $\Sh_{G(\Zp)}(G_\Q,\fX)$ required follow immediately from Deligne's description. 

It remains to check (1) and (2). For (1), we need only remark that the action above is defined via the \'etale $G$-bundle $\omega_{et}$, and so in particular since the $G^{ad}(\Zbp)^+$-action leaves stable the $\Zp$-lattice $\omega_{et} \subset \omega_{et} \otimes \Qp$ and induces an automorphism of lisse sheaves, after applying $\D^0_{dR}$ it will exchange the lattices present in condition L and commute with the Frobenii present in condition F. Finally (2) follows from the main statement together with Kisin's proof \cite[3.3.7]{kis2} that $\cA_p^\circ(G) \subset \cA_p(G)$ is the stabiliser of $\Sh_{G(\Zp)}(G,\fX)^+_{\bar{E}_v} \subset \Sh_{G(\Zp)}(G,\fX)_{\bar{E}_v}.$
 
\end{proof}

\subsubsection{}
Next we remark that this action extends to a crystalline canonical model $\cP_{G(\Zp)}$. Indeed the argument follows formally from uniqueness (\ref{main uniqueness} (1)), together with the fact that the $\cA_p^{dR}(G)$-action preserves the CPLF condition.

In particular, let us return to our construction of an crystalline canonical model for $P_{G_2(\Zp)}(G_2,\fX_2)$. We have so far constructed a crystalline canonical model $\cP_{\cB(\Zp)}$ for $P_{\cB(\Zp)}(\cB,\fX_{\cB})$, and by the above argument this carries an action of the group $\cA_p^{dR}(\cB)$.

We now note some further constructions closely following \cite{kis2}, leaving the details (which are very similar to those of \cite{kis2}) to the reader. We re-emphasise that our entire picture takes place over the $p$-adic field $E_v$ where $E$ is the reflex field of $(G,\fX)$ and $v$ a fixed place, and have fixed $\bar{E}_v$ an algebraic closure. Let $L/E_v$ denote the maximal unramified extension of $E_v$.

We define $\cE^{dR}(\cB) \subset \cA_p^{dR}(\cB) \times \Gal(L/E_v)$ to be the subgroup that induces an automorphism of the fiber $P_{\cB(\Zp)}(\cB,\fX_\cB)^+ \subset P_{\cB(\Zp)}(\cB,\fX_\cB)_L$ over a connected component $\Sh_{\cB(\Zp)}(\cB,\fX_\cB)^+$ of the Shimura variety. As in \cite[3.3.7]{kis2} this enjoys sitting within a canonical identification
$$\cA_p^{dR}(\cB) *_{\cA_p^{dR,\circ}(\cB)} \cE^{dR}(\cB) \rightiso \cA_p^{dR}(\cB) \times \Gal(L/E_v).$$

Following \cite[3.3.10]{kis2} we may push this identification out along $\cA_p^{dR}(\cB) \rightarrow \cA_p^{dR}(G_2)$ to identify the groups 
$$\cA_p^{dR}(G_2) *_{\cA_p^{dR,\circ}(\cB)} \cE^{dR}(\cB) \rightiso \cA_p^{dR}(G_2) \times \Gal(L/E_v),$$
which act naturally on
$$(\cA_p^{dR}(G_2) \times P_{\cB(\Zp)}(\cB,\fX_\cB)^+_L)/ \cA_p^{dR,\circ}(\cB) \rightiso P_{G_2(\Zp)}(G_2,\fX_2)_{L}.$$

Next, note that by what we have already seen and that $L/E_v$ is unramified (with ring of integers which we denote by $\cO$), the $\cA_p^{dR}(G_2) *_{\cA_p^{dR,\circ}(\cB)} \cE^{dR}(\cB)$-action extends to an integral model and can be used to define a candidate crystalline canonical model
$$\cP_{G_2(\Zp),\cO} := (\cA_p^{dR}(G_2) \times \cP_{\cB(\Zp),\cO}^+)/ \cA_p^{dR,\circ}(\cB).$$
We should pause to note that the quotient makes sense because $\Ker(\cA_p^{dR,\circ}(\cB) \rightarrow \cA_p^{dR,\circ}(G_2))$ acts freely on $\cP_{\cB(\Zp),\cO}^+$, being an extension of the group scheme $\Ker(\cB^c \rightarrow G_2^c)$ which obviously acts freely on each fibre and the group $\Delta(\cB,G_2) := \Ker(\cA_p(\cB) \rightarrow \cA_p(G_2))$ which acts freely on the base by \cite[3.4.6]{kis2}. Finally, note that the $\Gal(L/E_v) = \Gal(\cO/\cO_v)$-action provides a descent datum giving us $\cP_{G_2(\Zp),\cO_v}$.

\subsubsection{}
Let us verify that the construction above does indeed define a crystalline canonical model for $P_{G_2(\Zp)}(G_2,\fX_2)_{E_v}$. First, note that the construction gives a weak filtered $F$-crystal with $G_2^c$ structure because we may work at finite level, where this is implied by descent along a finite \'etale map for filtered $F$-isocrystals and vector bundles. It also has an equivariant $G_2(\Afp)$-action built in by construction (via $G_2(\Afp) \subset \cA_p^{dR}(G_2)$).

The CPLF condition may now be checked easily using the $\cB(\Afp)$-equivariant diagram of Galois covers
$$
\begin{CD}
    \Sh(\cB,\fX_{\cB}) @>>>  \Sh_{\cB(\Zp)}(\cB,\fX_\cB)\\
@VVV        @VVV\\
  \Sh(G_2,\fX_2)   @>>>  \Sh_{G_2(\Zp)}(G_2,\fX_2)
\end{CD}
$$

and by noting that (using the transitivity of the $G_2(\Afp)$-action on components of $\Sh_{G_2(\Zp)}(G_2,\fX_2)$ \cite[3.3.10]{kis2}) one can lift a Hecke translate of any crystalline point $x \in \cS_{G_2(\Zp)}$ to a crystalline point $\tilde{x} \in \cS_{\cB(\Zp)}$.

Finally, for the strong divisibility condition, again we may use the Hecke action to find for any component $\cS_2^+ \subset \cS_{G_2(\Zp),\cO}$ a finite \'etale map $\cS^+ \rightarrow \cS_2^+$ where $\cS^+ \subset \cS_{\cB(\Zp),\cO}$ is a component and over which there is a natural finite \'etale map $\cP^+ \rightarrow \cP_2^+$ induced by restricting $\cP_{\cB(\Zp)}$ and $\cP_{G_2(\Zp)}$ to these respective components. For any local Frobenius lifts $(U_\beta,\sigma_\beta)$ on $\cS_2^+$, we may lift them uniquely (using \'etaleness) to local Frobenius lifts on $\cS^+$, and it is clear that for $\cP_2^+$ to be strongly divisible it suffices that $\cP^+$ be strongly divisible. But this is known because $\cP^+$ is a component of the crystalline canonical model for $(\cB,\fX_{\cB})$.

The last step is to check that in the case where $E_{2,v}:= E(G_2,\fX_2)_v \not= E_v$ that our construction descends to $\cO_{E_2,v}$. This almost follows immediately from the uniqueness statement (one obtains a descent datum from $P_{G_2(\Zp)}(G_2,\fX_2)_{E_{2,v}}$), with the only other statement necessary to prove being that the Frobenius we have above defined on $P_{G_2(\Zp)}(G_2,\fX_2)_{E_v}$ descends to $E_{2,v}$. This final part we postpone for (\ref{Frobenius descent remark}). Otherwise our main theorem has now been proved.

\subsection{Comparison with \'etale $G$-bundles}

With our construction (essentially) complete, the following is a natural question to ask. Let $(G,\fX)$ be a Shimura datum with $G$ unramified at $p$ and fix $G/\Zbp$ a reductive model, take $K^p \subset G(\Afp)$ open compact, let $E \supset E(G,\fX)$ absolutely unramified at $p$ and fix $v|p$ a place of $E$.

Using the pro-Galois $G^c(\Zp)$ cover $\Sh_{K^p} \rightarrow \Sh_{G(\Zp)K^p}$ we may define $$\omega_{et,K^p}:\Rep_{\Zp}(G) \rightarrow \Lisse_{\Zp}(\Sh_{G(\Zp)K^p}(G,\fX)_{E_v}).$$

On the other hand, the crystalline canonical model, if it exists, defines a fibre functor
$$\omega_{crys,K^p}: \Rep_{\Zp}(G) \rightarrow \FCrys_{\cS_{G(\Zp)K^p/\cO_v}}.$$

It is naturally to ask if these are integrally associated (recall from \ref{integrally associated}), and we would expect this to always be the case. Here we prove the following.

\begin{thm}
\label{assoc thm}
With the above setup, if $(G,\fX)$ is of abelian type then $\omega_{et,K^p}$ and $\omega_{crys,K^p}$ are integrally associated. 
\end{thm}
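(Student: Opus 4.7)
The plan is to apply Proposition \ref{2.6.5}, which reduces integral association of two fibre functors to the analogous statement on a single faithful representation together with tensors cutting out $G$. I handle the three types of Shimura datum in the order in which they appear in the construction: special, Hodge, and finally abelian.

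The special type case of Section \ref{3.2} is essentially by construction: $\omega_{crys}$ is literally defined as $\phi^*\fM(\omega_{et}(-))/u$, so the $\Qp$-level compatibility of condition (1) of integral association is immediate from Theorem \ref{kismod}, and the integral compatibility of condition (2) in the Fontaine-Laffaille range is exactly Proposition \ref{FL equality}. The Hodge type case uses the symplectic embedding $G \hookrightarrow GSp(V)$ with its tensors $s_\alpha \in V^\otimes$ from Section \ref{hodge case}. By Theorem \ref{main 3.3} we have $\omega_{crys}(V) = \hat{\cV} = \cH^1_{crys}(\cA_K/\cS_K)$, while $\omega_{et}(V)$ is the $\Zp$-dual of $R^1\pi_*\Zp$ for the universal abelian variety $\pi: \cA_K \to \cS_K$. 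These are integrally associated via Theorem \ref{comp Zp} applied with $\cE = \mathcal{O}$, $a = 0$, $b = 1$, invoking the abelian-scheme exception when $p = 3$ and noting that for $p = 2$ condition (2) of integral association collapses because the Fontaine-Laffaille range becomes $[0, 0]$. The tensors $s_{\alpha, dR}$ and $s_{\alpha, et}$ correspond under the \'etale-de Rham comparison by the theorem of Blasius-Wintenberger on absolute Hodge cycles in abelian motives, as already exploited in (\ref{formalid}). Proposition \ref{2.6.5} then upgrades these facts to integral association for every representation of $G = G^c$.

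For the abelian type reduction one first treats $\cS_{\cB(\Zp)}$. Since by construction $\cP_{\cB(\Zp)} = a^*\cP_{G(\Zp)} \times_\theta \delta^*\cP_{\hat{\cO_E}^*}$ is a fibre product, and every representation of $\cB^c$ is built from those restricted along the closed immersion $\cB^c \hookrightarrow G^c \times_{G^{ab,c}} E^{*,c}$, integral association on $\cS_{\cB(\Zp)}$ follows from the previous two cases combined with the fact that integral association is preserved under pullback along scheme maps (both comparison functors commute with pullback) and under pushout of torsors along group maps (which at the level of fibre functors is precomposition with $\operatorname{Res}$). The final descent to $\cS_{G_2(\Zp)}$ uses that each component of $\cS_{G_2(\Zp), \cO}$ is the target of a finite \'etale map from a component of $\cS_{\cB(\Zp), \cO}$ under which $\cP_{G_2(\Zp)}$ pulls back to the pushout of $\cP_{\cB(\Zp)}$ along $\cB^c \to G_2^c$; integral association descends because the relevant comparison isomorphisms are defined and equivariant upstairs.

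The main obstacle is this final descent step: one must verify that the $\cA_p^{dR}(G_2)$-equivariant gluing constructed in (\ref{3.4.4}) respects the Fontaine-Laffaille lattice equality and not merely the rational compatibility. The cleanest approach is to argue as in (\ref{3.4.4}) by using the transitive Hecke action on connected components from \cite[3.3.10]{kis2} together with Lemma \ref{lattices on points}: any lattice equality in a vector bundle on an integral scheme can be checked at a single crystalline point $x$ of $\cS_{G_2(\Zp)}$, and such a point (after a suitable Hecke translate) lifts to a crystalline point $\tilde{x}$ of $\cS_{\cB(\Zp)}$ over which the required lattice identifications are already supplied by the $\cB$ case via Proposition \ref{2.6.5}.
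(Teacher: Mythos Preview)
Your proposal is correct and follows essentially the same route as the paper: special type by construction, Hodge type via Proposition~\ref{2.6.5} applied to the symplectic embedding and the Blasius--Wintenberger tensors, and abelian type by pushing through the fibre-product description over $\cB$ and then descending to $G_2$.

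Two small points of comparison are worth recording. First, the paper makes a single uniform reduction of condition~(2) of integral association: once~(1) is known in any given case, (2) follows automatically from the CPLF condition on the crystalline canonical model together with Proposition~\ref{FL equality} and Lemma~\ref{lattices on points}, since lattice equality may be checked at a crystalline point on each component. This saves you from re-proving~(2) case by case and in particular makes your final paragraph unnecessary. Second, for the $\cB$ step the paper does not argue purely formally via ``preservation under pullback and pushout'' but instead establishes the key fact that the \emph{\'etale} $\cB^c(\Zp)$-torsor $\bP_{\cB(\Zp)}=\Sh(\cB,\fX_\cB)$ itself decomposes as the fibre product $a^*\bP_{G(\Zp)}\times_\theta \delta^*\bP_{E^*(\Zp)}$; this is deduced from the Cartesian square of Shimura data with vertices $(\cB,\fX_\cB)$, $(G,\fX)$, $(E^*,h_E)$, $(G^{ab},h^{ab})$, and is checked fpqc-locally after trivialising. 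With that in hand the $B_{crys}$-comparison for $\cB$ is literally the fibre product of the comparisons for $G$ and $E^*$, which is cleaner than invoking an abstract compatibility of association with fibre products of torsors. Your sketch implicitly relies on this identification, so it would be worth making it explicit.
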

\begin{proof}
The proof is basically to follow the proof of our main theorem, which constructs $\omega_{crys,K^p}$, and check the required statement at each stage of the construction. Recall that being integrally associated has two parts, an `associated' statement (1), and a statement about Fontaine-Laffaille modules (2). We will also suppress the $K^p$ subscript where there is no danger of confusion.

In the special type case, part (1) follows directly from $p$-adic Hodge theory (we used Fontaine's construction to get $\omega_{crys}[1/p]$) and part (2) from the compatibility (\ref{FL equality}) of our Kisin module theoretic construction with the theory of Fontaine-Laffaille. In fact, for every case below once we have part (1), part (2) will follow from (\ref{FL equality}) and the CPL condition of crystalline canonical models, since equality of lattices may be checked at a point on each component, so it remains to check part (1) in the Hodge and abelian type cases.

In the Hodge type case, we apply the criterion (\ref{2.6.5}). We need only check the Hodge embedding $G \hookrightarrow GL(V)$ satisfies the conditions of this proposition. First, we need to remark that (letting $\cA \rightarrow \Sh_{G(\Zp)K^p}$ be the pullback of the universal abelian scheme corresponding to our Hodge embedding)
$$\bL(\omega_{crys}(V)) = \bL(\D(\cA)) \cong \hat{T}_p(\cA) = \omega_{et}(V),$$
where the middle $p$-adic comparison step is a well-known special case of (\ref{comp Zp}). We may also take the $s_\alpha$ as in (\ref{3.3.1}) and note that we already know (by applying the de Rham cycles result of \cite{blasius} to the generic fibre of each component) that $s_{\alpha,et}$ and $s_{\alpha,dR}$ are exchanged under the $p$-adic comparison isomorphism. Thus (\ref{2.6.5}) suffices for the Hodge type case.

For the abelian type case, we proceed in two steps. Using the notation of the previous section, we now know the statement for $(G,\fX)$ and $(E^*,h_E)$ and use this to prove it first for $(\cB,\fX_{\cB})$ and then $(G_2,\fX_2)$. Recall (\ref{3.4.4}) that the crystalline canonical model for $(\cB,\fX_\cB)$ is constructed as
$$\cP_{\cB(\Zp)} = a^*\cP_{G(\Zp)} \times_{\theta} \delta^*\cP_{\hat{\cO_E}^*}.$$
We argue by constructing a similar picture on the `Galois' side. Let us introduce the general notation (so similarly for $\cB,E^*$) $$\bP_{G(\Zp)} := \Sh(G,\fX) \rightarrow \Sh_{G(\Zp)}(G,\fX)$$
the pro-Galois $G^c(\Zp)$-bundle that gives rise to $\omega_{et}$.
Now recall the diagram of Shimura data, Cartesian as a diagram of group schemes
$$
\begin{CD}
    (\cB,\fX_\cB) @>>> (E^*,h_E)  \\
@VVV        @VVV\\
     (G,\fX) @>>> (G^{ab},h^{ab}).
\end{CD}
$$
This induces maps $\bP_{\cB(\Zp)} \rightarrow a^*\bP_{G(\Zp)}$ and $\bP_{\cB(\Zp)} \rightarrow \delta^*\bP_{E^*(\Zp)}$ that sit above a natural isomorphism $\theta$ identifying the pushforwards of these Galois bundles to $G^{ab}(\Zp)$ via the pullback of $\bP_{G^{ab}(\Zp)}$. Thus we have a $\cB^c(\Zp)$-equivariant map $$\bP_{\cB(\Zp)} \rightarrow a^*\bP_{G(\Zp)} \times_{\theta} \delta^*\bP_{E^*(\Zp)}$$ and we claim it is an isomorphism. But since such a claim may be checked fpqc locally, it is clear: after passing to a suitable pro-Galois cover (e.g. making the base change $\bP_{\cB(\Zp)} \rightarrow \Sh_{\cB(\Zp)}$ itself) it becomes a trivial Galois bundle, and both sides are $\cB^c(\Zp)$-Galois.

The statement (1) is now immediate because functorially in $R$ we have natural isomorphisms respecting Galois actions, and the filtered $F$-isocrystal $\cB^c$-structures on both sides

\begin{eqnarray*}
\bP_{\cB(\Zp)} \otimes_{\Zp} B_{crys}(\hat{R}) & \cong & (a^*\bP_{G(\Zp)} \otimes_{\Zp} B_{crys}(\hat{R}))\times_{\theta}(\delta^*\bP_{E^*(\Zp)} \otimes_{\Zp} B_{crys}(\hat{R}))  \\
& \cong & a^*\cP_{G(\Zp)}(\hat{R}) \otimes_{\hat{R}} B_{crys}(\hat{R}) \times_{\theta} \delta^*\cP_{E^*(\Zp)}(\hat{R}) \otimes_{\hat{R}} B_{crys}(\hat{R})\\
& \cong & \cP_{\cB(\Zp)}(\hat{R}) \otimes_{\hat{R}} B_{crys}(\hat{R}).
\end{eqnarray*}

Finally we need to conclude (1) for $(G_2,\fX_2)$. But this is immediate from the construction of $\cP_{G_2(\Zp)}$ as a quotient by $\cA_p^{dR,\circ}(\cB)$ and the fact that this group acts equivariantly on the isomorphism above. This suffices for $\cP_{G_2(\Zp),\cO_v}$ and we will extend the result to the last remaining case, where $E_{2,v}$ is smaller than $E_v$, in our remarks below.
\end{proof}

\subsubsection{}
\label{Frobenius descent remark}
Recall the gap in the proof of the main theorem, that we defined a Frobenius on $\cP_{G_2(\Zp),\cO_v}$ but did not show that it descends to $\cO_{E_2,v}$. We also (given this gap) were unable to check above that for $(G_2,\fX_2)$ we have $\omega_{et}$ and $\omega_{crys}$ are associated, but only that they were associated after passing to the finite unramified extension $\cO_v/\cO_{E_2,v}$.

Both these gaps are obviously removed by the following lemma of pure relative $p$-adic Hodge theory, which is essentially a relative version of the statement that a Galois representation is crystalline iff it is crystalline after making an unramified extension.

\begin{lem}
Let $X/W$ be a smooth scheme, $W'/W$ an unramified extension. Suppose $\cE' \in \FCrys_{\fX \otimes W'/W'}[1/p]$ and $\bL \in \Lisse_{\Qp}(X_K)$ are such that $\cE'$ and $\bL':= \bL|_{X \otimes_W W'}$ are associated. 

Then there exists a canonical $\cE \in \FCrys_{\fX/W}[1/p]$ such that $\cE$ and $\bL$ are associated and $\cE\otimes_W W' = \cE'$.
\end{lem}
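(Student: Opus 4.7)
The plan is Galois descent along the \'etale Galois cover $\fX \otimes_W W' \to \fX$ with group $\Gamma = \Gal(W'/W)$. For each $\sigma \in \Gamma$, write $f_\sigma$ for the induced automorphism of $\fX \otimes_W W'$ over $\fX$. To produce $\cE$ it will suffice to equip $\cE'$ with a $\Gamma$-equivariant structure compatible with all its structures as a filtered $F$-isocrystal: that is, canonical isomorphisms $\psi_\sigma: f_\sigma^*\cE' \cong \cE'$ in $\FCrys_{\fX \otimes W'/W'}[1/p]$ satisfying the evident cocycle condition.

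The input I have is that $\cE'$ is associated to $\bL'$, and crucially that $\bL'$ is the pullback of a lisse sheaf $\bL$ already defined on $X_K$, hence comes equipped with a canonical $\Gamma$-equivariant structure, i.e.\ natural isomorphisms $\gamma_\sigma: f_\sigma^*\bL' \cong \bL'$. Pulling the association isomorphism back along $f_\sigma$, and exploiting the natural action of $\sigma$ on $B_{crys}(\hat R)$ through its action on $W(\bar R^\flat)$, one assembles for each small affine open $\Spec R \subset X$ a chain of identifications
$$f_\sigma^*\cE'(B_{crys}(\hat R)) \cong f_\sigma^*\bL' \otimes_{\Qp} B_{crys}(\hat R) \map{\gamma_\sigma \otimes 1} \bL' \otimes_{\Qp} B_{crys}(\hat R) \cong \cE'(B_{crys}(\hat R))$$
which is functorial in $R$ and preserves filtration, Frobenius, connection, and $\Gamma_R$-action.

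The next step, which carries the main technical content, is to promote this period-level identification to an honest isomorphism $\psi_\sigma$ of filtered $F$-isocrystals. This amounts to a relative full faithfulness statement: a filtered $F$-isocrystal on $\fX \otimes W'/W'$ should be recoverable up to canonical isomorphism from its $B_{crys}(\hat R)$-realizations together with the ambient structures, essentially via a relative $D_{crys}$ construction that cuts out the underlying module as the appropriate $\Gamma_R$-invariants in filtration degree zero. Since the period-level identification above is built canonically, the resulting family $\{\psi_\sigma\}$ is automatically compatible with composition and so satisfies the cocycle condition $\psi_{\sigma\tau} = \psi_\sigma \circ f_\sigma^*\psi_\tau$.

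With the descent datum in hand, faithfully flat descent for vector bundles with flat connection, filtration, and Frobenius along the finite \'etale cover $\fX \otimes W' \to \fX$ produces the desired $\cE \in \FCrys_{\fX/W}[1/p]$ together with a canonical identification $\cE \otimes_W W' \cong \cE'$. That $\cE$ and $\bL$ are associated is then automatic: the period isomorphism on $\fX \otimes W'$ was constructed to be $\Gamma$-equivariant and hence descends to an analogous isomorphism over $\fX$. The main obstacle I expect is the full-faithfulness step, which must be stated carefully enough in the relative setting to let period-level data descend to the level of actual filtered $F$-isocrystals; granted this, everything else is routine descent bookkeeping.
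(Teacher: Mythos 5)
Your proposal is correct in outline, and it takes a genuinely different organizational route from what the paper does, although both rest on the same essential technical input.

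The paper sidesteps explicit descent entirely: it cites Tan--Tong to recognize $\bL'$ as crystalline and to exhibit $\cE'$ as $D_{\mathrm{crys}}(\bL')$, then uses the local Galois-invariant description of the relative $D_{\mathrm{crys}}$ functor to produce a $\Gal(K'/K)$-equivariant identification $D_{\mathrm{crys}}(\bL)\otimes_K K' \cong D_{\mathrm{crys}}(\bL')$, from which it deduces that $\bL$ itself is crystalline and simply sets $\cE := D_{\mathrm{crys}}(\bL)$. The only remaining check is that this $D_{\mathrm{crys}}(\bL)$ is a genuine filtered $F$-isocrystal rather than merely a convergent one, which the paper handles by a radius-of-convergence remark via $\cE'$.

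You instead manufacture a descent datum $\{\psi_\sigma\}$ on $\cE'$ along the finite \'etale cover $\fX\otimes W' \to \fX$ and invoke \'etale descent. The two approaches buy different things. Your version never needs the remark about convergent versus true isocrystals, since the descent of a filtered $F$-isocrystal along a finite \'etale cover is automatically a filtered $F$-isocrystal; on the other hand, you must explicitly verify the cocycle condition and must phrase a relative full-faithfulness statement carefully enough that period-level identifications determine honest morphisms of filtered $F$-isocrystals. The paper's version buys you out of the cocycle bookkeeping entirely, since $\cE$ is constructed in one stroke as $D_{\mathrm{crys}}(\bL)$, but at the price of the convergence check. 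Both ultimately depend on precisely the same hard ingredient --- the existence and full faithfulness of a relative $D_{\mathrm{crys}}$ (the ``main obstacle'' you flag at the end of your proposal, which the paper discharges by citation to Tan--Tong's results on crystalline lisse sheaves). Given that ingredient, your descent argument is sound and would serve as an acceptable alternative proof; you should just make explicit that the cocycle condition for $\{\psi_\sigma\}$ follows from the corresponding condition for $\{\gamma_\sigma\}$ together with functoriality of the period isomorphisms, and that your full-faithfulness assertion is exactly the statement that the relative $D_{\mathrm{crys}}$ is an equivalence onto its essential image.
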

\begin{proof}
We found it easiest to construct this argument using statements from the recent paper of Tan and Tong \cite{tt}, but a more direct proof is likely possible. We let $K' := W'[1/p]$ denote the obvious finite extension of $K=W[1/p]$.

Note that by \cite[3.22]{tt}, we have that $\bL'$ is \emph{crystalline} in the sense of \cite[3.11]{tt} and that $\cE'$ admits a construction as $\cE' = \D_{crys}(\bL')$ via a Fontaine-style functor. But now we may use the same argument as in the classical result: the local description \cite[3.15]{tt} of this functor in terms of Galois invariants makes it clear that there is a $\Gal(K'/K)$-equivariant relation
$$\D_{crys}(\bL) \otimes_K K' \cong \D_{crys}(\bL'),$$
which in turn allows one to deduce that $\bL$ is crystalline (for example, because the conditions of \cite[3.14]{tt} are now obvious). This result also lets us construct $\cE = \D_{crys}(\bL)$, which is a filtered $F$-isocrystal by \cite[3.18]{tt} (because it coverges on the same radius as $\cE'$ is a true isocrystal and not merely a convergent one), and by the other direction of \cite[3.22]{tt} is associated to $\bL$. 
\end{proof}

\subsection{Application to Galois representations}
Recall that one key application of the theory of Shimura varieties in the Langlands programme is that they furnish one with a well-organised supply of Galois representations that one expects to be those attached to a precise collection of automorphic forms. One obvious application of our results is to deduce $p$-adic Hodge theoretic statements about these Galois representations. Specifically, we have the following statement in the case where the underlying Shimura variety is proper.

\begin{thm}
Suppose $(G,\fX)$ is a Shimura datum of abelian type with $G$ unramified at $p$. Fix $G/\Zp$ a reductive integral model, corresponding to a hyperspecial level at $p$, $K^p \subset G(\Afp)$ open compact, and $\rho: G_{\Zp}^c \rightarrow GL(V)$ some weight, with associated lisse $\Zp$-sheaf $\cL_\rho$. 

Assume $\Sh_{K^pG(\Zp)}$ is proper. Then the Galois representation
$$H^i_{et}(\Sh_{K^pG(\Zp)}(G,\fX)_{\overline{E(G,\fX)}}, \cL_\rho[1/p])$$
is crystalline at all places $v|p$ of $E(G,\fX)$.

Moreover, if the Hodge numbers of $V$ (viewed as a Hodge structure via any element of $\fX$) lie in the range $[0,a]$ and $a+i < p-2$, then the lattice $$H^i_{et}(\Sh_{K^pG(\Zp)}(G,\fX)_{\overline{E(G,\fX)}}, \cL_\rho)$$
is Fontaine-Laffaille with corresponding strongly divisible module given by $$H^i_{crys}(\cS_{K^pG(\Zp)}/W, \cE_\rho),$$
where $\cE_\rho$ is the filtered $F$-crystal coming from the crystalline canonical model.
\end{thm}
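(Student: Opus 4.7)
The plan is to apply Faltings' $p$-adic comparison theorems (\ref{comp Qp}) and (\ref{comp Zp}) to the structure morphism $f: \cS \to \Spec \cO_v$, where $\cS := \cS_{K^pG(\Zp)}$, using the crystalline canonical model to produce a filtered $F$-crystal coefficient on the source. Since $(G,\fX)$ is of abelian type with $G$ unramified at $p$, the main theorem of this paper produces a crystalline canonical model with fibre functor $\omega_{crys}: \Rep_{\Zp}(G^c) \to \FCrys_{\cS/\cO_v}$; set $\cE_\rho := \omega_{crys}(V)$. By (\ref{assoc thm}), $\omega_{crys}$ and $\omega_{et}$ are integrally associated, so in particular $\cE_\rho[1/p]$ and $\cL_\rho[1/p]$ are associated as a filtered $F$-isocrystal and lisse $\Qp$-sheaf respectively.

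For the first (qualitative) statement, I would verify the hypotheses of (\ref{comp Qp}): $\cS/\cO_v$ is smooth by Kisin, properness is the assumption, and $\cO_v = W(k_v)$ because $E(G,\fX)$ is unramified at $p$ (as $G$ is). Applying the theorem then yields that $R^if_*\cE_\rho[1/p]$ and $R^if_*\cL_\rho[1/p]$ are associated. With target $\Spec \cO_v$, the former is nothing but $H^i_{crys}(\cS/W, \cE_\rho)[1/p]$ viewed as a filtered $\phi$-module over $E_v$, and the latter is the $\Gamma_{E_v}$-representation on the $p$-adic \'etale cohomology in question; their being associated is precisely the statement that this Galois representation is crystalline at $v$, with $D_{crys}$ naturally identified with $H^i_{crys}(\cS/W, \cE_\rho)[1/p]$.

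For the integral refinement, I would apply (\ref{comp Zp}) instead. The key additional input to check is $\cE_\rho \in \FCrys^{[0,a]}_{\cS/W}$. Strong divisibility is part of the definition of the crystalline canonical model (and for the assertion to be non-vacuous we must have $p \geq 5$). The bound on the filtration degrees follows from the construction of $\omega_{crys}$: the identification $\iota: \omega_{crys}[1/p] \cong \omega_{dR}$ together with the Liu-Zhu description of $\omega_{dR}$ via special points show that at any classical point the filtration on $\omega_{dR}(V)$ is the Hodge filtration coming from $\rho \circ \mu$ for a Hodge cocharacter $\mu$ attached to $\fX$; since the Hodge numbers of $V$ lie in $[0,a]$, so does the filtration on $\cE_\rho$. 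With $a+i < p-2$, (\ref{comp Zp}) then yields $\bL(R^if_*\cE_\rho) = R^if_*\bL(\cE_\rho) = R^if_*\cL_\rho$, where the second equality uses the Fontaine-Laffaille clause of integral association (\ref{integrally associated}). This is exactly the claimed Fontaine-Laffaille description of the lattice, with associated strongly divisible module $H^i_{crys}(\cS/W,\cE_\rho)$.

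The main obstacle I anticipate is verifying the filtration range claim for $\cE_\rho$: while morally clear, it requires tracing carefully through the construction of $\omega_{crys}$, the identification of its Rees structure with the Hodge filtration on $\omega_{dR}$, and then matching this at a classical point of each component with the Hodge structure on $V$ predicted by $\fX$. Modulo this compatibility, the argument is a direct application of the quoted comparison theorems combined with (\ref{assoc thm}).
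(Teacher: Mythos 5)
Your proposal is correct and takes essentially the same approach as the paper's own proof: obtain $\cE_\rho := \omega_{crys}(V)$ from the crystalline canonical model, invoke Theorem~\ref{assoc thm} for integral association, and then apply Theorems~\ref{comp Qp} and~\ref{comp Zp} to the (smooth, proper) structure morphism $\cS_{K^pG(\Zp)} \rightarrow \Spec \cO_v$. The paper's version is terser — it states only that ``the condition forces $a < p-2$'' and then quotes the Fontaine--Laffaille clause of integral association — so you are right to flag that one still needs $\cE_\rho \in \FCrys^{[0,a]}_{\cS/W}$, a point the paper leaves implicit; your sketch (via the identification $\iota: \omega_{crys}[1/p] \rightiso \omega_{dR}$ and the Hodge cocharacter read off at a classical point of each component) is exactly how that gap is to be filled.
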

\begin{proof}
This follows because by our main theorem we have a crystalline canonical model for $(G,\fX)$ which by (\ref{assoc thm}) is integrally associated to the natural etale $G^c(\Zp)$-torsor on $\Sh_{K^pG(\Zp)}(G,\fX)$. In particular, we may find $\cE_\rho$ associated to $\cL_\rho$, and the first part follows from (\ref{comp Qp}).

The second part follows because the condition forces $a<p-2$ and therefore the `integrally associated' statement tells us that $\cE_\rho$ is such that $\bL(\cE_\rho) = \cL_\rho$. With this in hand, we finish by applying (\ref{comp Zp}).
\end{proof}

\subsubsection{}
Finally, we sketch how to extend this result to a non-proper situation, although at the time of writing it seems not all the necessary results are immediately available to carry this out in full generality, and to give a completely general treatment would in any case likely be rather messy.

First, one needs to ensure that crystalline $p$-adic comparison theorems analogous to (\ref{comp Qp}) and (\ref{comp Zp}) are available for the type of cohomology groups one wishes to study. For example Faltings \cite[\S5]{fal1} also gives comparison theorems for cohomology with compact support or partial compact support. Assuming this is the case, one should also note carefully the type of coefficient sheaves required, and that the existence of the \'etale coefficient sheaf can be deduced from the usual setup (for example, to do this one may wish to take advantage of the existence of a toroidal compactification of the Shimura variety at all levels that retains a Hecke action). We shall call whatever is needed on the crystalline side of the picture a ``log crystal'' (hiding the fact that it's actually a log filtered $F$-(iso) crystal and that we need to define what exactly `log' means).

This assessment made, the task will be to construct a log crystal with $G$-structure, so one must follow the argument of this paper checking the necessary steps are still possible. Note that we expect much of this to be not too difficult given the work of Madapusi Pera \cite{mp} (including work in progress on the abelian type case). Loosely speaking, one must carry out the following tasks.

\begin{enumerate}
\item Define a category of log crystals with $G$-structure. It is clear how to do this as a fibre functor taking values in a category of log crystals defined for example as in \cite[1.3.3]{mp}, and thinking about torsors on log schemes it should also be easy to give this a geometric interpretation.
\item Prepare any necessary ambient compactifications of the integral models $\cS_{K_p}(G,\fX)$ with an extension of the Hecke action. For example \cite{mp} produces smooth proper compactifications with normal crossing divisors at the boundary, and an extension of the Hecke action, in the Hodge type case. We also believe Madapusi Pera has or is working on an argument to extend this result to the abelian type case, and that it should be possible to set up diagrams as in \S\ref{abtype} directly relating the Hodge type compactified integral models to the abelian type ones via their connected components.
\item Repeat the argument for the Hodge type case \S\ref{hodge case} replacing the sheaf $\hat{\cV}$ of relative crystalline cohomology of the universal abelian variety with the corresponding log crystal (for example see \cite[3.2]{mp}). One must check that the $s_{\alpha,dR}$ extend to tensors in the category of log crystals (e.g. if in particular they extend to the boundary as in \cite[4.3.7]{mp}), and that one obtains a log crystal with $G$-structure and an equivariant action of $\cA_p^{dR}(G)$.
\item Follow the argument of \S\ref{abtype} to construct a log crystalline canonical model in the abelian type case. We expect it to be formally exactly the same provided the necessary information has been collected in (2) and (3).
\item Check an analogue of the result about being ``integrally associated'' is true in the new situation. Again, we expect this to be formally very similar.
\end{enumerate}

With the log crystal with $G$-structure in hand, one could then conclude a generalisation of the above theorem by using a $p$-adic comparison theorem.

\end{document}